\newtheorem  {theorem}       {Theorem}[section]
\newtheorem  {lemma}         {Lemma}[section]
\newtheorem  {definition}    {Definition}[section]
\newtheorem* {theorem*}      {Theorem}
\newtheorem* {lemma*}        {Lemma}
\newtheorem* {corollary*}    {Corollary}
\newtheorem* {proposition*}  {Proposition}
\newtheorem* {definition*}   {Definition}
\newtheorem* {remark*}       {Remark}
\newtheorem  {remark}        {Remark}
\newtheorem* {remarks*}      {Remarks}
\newtheorem  {conjecture}    {Conjecture}[section]
\newtheorem* {claim*}        {Claim}
\newtheorem  {claim}        {Claim}
\newcommand{\N} {\mathbb N}
\newcommand{\Z} {\mathbb Z}
\newcommand{\R} {\mathbb R}
\newcommand{\Ordo} {{\cal O}}
\newcommand{\ind}{ \mathbb I}
\newcommand{\wt} {\widetilde}
\newcommand{\be}{\begin{equation}}
\newcommand{\ee}{\end{equation}}
\newcommand{\bea}{\begin{eqnarray}}
\newcommand{\eea}{\end{eqnarray}}
\newcommand{\beqs}{\begin{eqnarray*}}
\newcommand{\eeqs}{\end{eqnarray*}}
\newcommand{\beq}{\begin{eqnarray}}
\newcommand{\eeq}{\end{eqnarray}}
\newcommand{\beas}{\begin{eqnarray*}}
\newcommand{\eeas}{\end{eqnarray*}}
\newcommand{\abs}[1]{\left| #1  \right|}
\newcommand{\prob}[1]{\ensuremath{\mathbf{P}\left(#1\right)}}
\newcommand{\expect}[1]{\ensuremath{\mathbf{E}\left(#1\right)}}
\newcommand{\condprob}[2]{\ensuremath{\mathbf{P}\big(#1\,\big|\,#2\big)}}
\newcommand{\condexpect}[2]{\ensuremath{\mathbf{E}\big(#1\,\big|\,#2\big)}}
\newcommand{\cC}{\mathcal{C}}
\newcommand{\cF}{\mathcal{F}}
\newcommand{\cH}{\mathcal{H}}
\newcommand{\cV}{\mathcal{V}}
\newcommand{\cW}{\mathcal{W}}
\newcommand{\deri}[2]{\ensuremath{{#1}'}} 
\newcommand{\kderi}[2]{\ensuremath{{#1}''}} 
\newcommand{\ER} {Erd\H{o}s-R\'enyi }
\newcommand{\conf}[1]{\underline{\mathbf{#1}}}
\newcommand{\rv} {v}
\newcommand{\myN} {\mathbf{V}} 
\newcommand{\myNz} {\mathbf{V}^*} 
\newcommand{\phisup} {E_{sup}}
\newcommand{\phiinf} {E_{inf}}
\newcommand{\bt} {T^b} 
\newcommand{\gt} {T^g} 
\newcommand{\epslam} {\delta(\lambda)} 
\newcommand{\mycond} {\text{ } | \text{ }}
\newcommand{\rbspace} {\cW}
\newcommand{\Prb} {\mathbb{P}}
\newcommand{\weak} {\Rightarrow}
\newcommand{\rbeps} {\varepsilon}
\newcommand{\rbt} {\tilde{t}}
\newcommand{\spect} {t^*}
\newcommand{\rbu} {\nu}
\newcommand{\rbtfin} {\overline{t}}
\newcommand{\E} {\mathbb E}
\title
{Mean field frozen percolation}
\author {Balázs Ráth}
\date{}
\begin{document}

\maketitle


\maketitle

\begin{abstract}
We define a modification of the \ER random graph process which can
be regarded as the mean field frozen percolation process. We
describe the behavior of the process using differential equations
and investigate their solutions in order to show the
self-organized critical and extremum properties of the critical
frozen percolation model. We prove two limit theorems about the
distribution of the size of the component of a typical frozen vertex.

\end{abstract}

\section{Statements}\label{section_statements}

The frozen percolation process on a binary tree was defined by D.
J. Aldous in \cite{Aldousfrozen}: it is a modification of the
percolation process which makes  the
following informal description mathematically rigorous: we only occupy an edge if both
end-vertices are in a finite cluster. The self-organized critical
property of this model manifests in the fact that for $t \geq
\frac12$, which is the critical time of the corresponding
percolation process, a typical finite cluster has the distribution
of a critical percolation cluster.

I. Benjamini and O. Schramm  showed that it is impossible to
define a similar modification of the percolation process on
$\Z^2$. An explanation of this non-existence result can be found
in Section 3. of \cite{BalintRob}.

First we give an informal description of the mean field frozen
percolation process: It is a modification of the \ER random graph
process: Initially we have a (not necessarily empty)
 graph on $\lfloor N\cdot m_0(0) \rfloor$
vertices (one should think about $N$ as being large, but the
initial mass $m_0(0)$ is fixed), and
 between every possible pair of vertices, edges appear with rate
 $\frac{1}{N}$. Simultaneously lightnings strike vertices
 with rate $\lambda(t)\mu(N)$ at time $t$ and when a vertex is struck, the
 fire spreads along the edges and burns the connected component of
 that vertex: that subgraph is removed from the graph, including
 vertices. Thus the number of vertices of the random graph
 decreases with time. The expressions
 "burnt", "frozen", "deleted" and "removed" are treated as synonyms in the
 sequel.

If $\cV_k^N(t)$ denotes the number of vertices contained in
components of size $k$ in the random graph at time $t$, then the
vector-valued stochastic process
$\conf{\cV}(t)=(\cV_1^N(t),\cV_2^N(t),\dots)$ also has the Markov
property (the main advantage of the mean field model is that the graph structure of the connected components has no effect on the evolution of component sizes).
 We are interested in the model when $1 \ll N$.

 Denote by $\N=\{1,2,\dots\}$ and $\N_0=\{0,1,2,\dots \}$.

\begin{definition}\label{def_process}
We fix $m_0(0) \in \R_{+}$.
 The mean field frozen percolation process on $N$
vertices is a continuous time Markov process with state space
\[\Omega_N=\{ \conf{\cV} \in \N_0^{\N} : \sum_{k\geq 1} \cV_k \leq
\lfloor N\cdot m_0(0) \rfloor, \; \forall k\;   \frac{\cV_k}{k} \in \N_0  \}\]
We define the coagulation and deletion operators
 \begin{align}
\conf{\cV}_{k,l}^+ &:= \left\{
\begin{array}{ll}
(\cV_1,\cV_2,\dots, \cV_k-k,\dots,\cV_l-l, \dots,
\cV_{k+l}+k+l,\dots) &
\mbox{ if $k < l$}\\
(\cV_1,\cV_2,\dots, \cV_k-2k,\dots, \cV_{2k}+2k,\dots) & \mbox{ if
$k=l$}
\end{array} \right.\\
\label{def_frozen_operator}
\conf{\cV}_k^- &:= (\cV_1,\dots,\cV_k-k,\dots)
\end{align}
 Let $\lambda:
\R_+ \to \R_+$ be a positive continuous function and $\mu: \N \to
\R_+$.
 The transition rates of the Markov process are
\begin{align}
\label{coag_rate}
 \lambda(\conf{\cV} \to \conf{\cV}_{k,l}^+) &=\left\{
\begin{array}{ll}
\frac{1}{N} \cdot \cV_k \cdot \cV_l & \mbox{ if $k < l$}\\
\frac{1}{N} \cdot \frac{\cV_k \cdot (\cV_k-k)}{2} & \mbox{ if $k =
l$}
\end{array} \right.
 \\
\label{lightning_rate}
 \lambda(
 \conf{\cV} \to \conf{\cV}_k^-)&=\lambda(t)\cdot \mu(N)\cdot \cV_k
 \end{align}

 Let
 $\rv^N_k(t):=\frac{\cV_k(t)}{N}$ denote the \emph{mass} of components of size $k$ at time $t$.

\end{definition}

The mean field frozen percolation model is closely related to the
mean field forest fire model (discussed in \cite{forestfiPDE}),
the only difference in the definition of the Markov process is
that in the case of the forest fire model, a burnt component of
size $k$ is replaced by $k$ isolated vertices, so that the number
of vertices in the random graph remains unchanged. The two models
both have the self-organized critical property
(and we believe that they are in
the same universality class, which means that the theorems of this paper have analogous "forest fire" versions),
 but the
corresponding partial differential equations have an explicit
solution in the case of the frozen percolation model which enables
us to say more about this model.
\begin{align*}
{\myN} &:= \{ \conf{v} = {\big(v_k\big)}_{k=1}^\infty  :\, \,
v_k \in \R, \, v_k \geq 0  \text{ \ and \ } \sum_{k=1}^\infty v_k < \infty \}\\
{\myNz}&:= \{ \conf{v}  : \, \,  \conf{v} \in \myN,  \, \, \exists
K<+\infty \,\, \forall k \geq K \quad v_k =0 \}
\end{align*}
\begin{definition} \label{def_sub_alt_crit}
 We consider a sequence of mean field frozen percolation processes
with $N \to \infty$, but with the initial state
\[\conf{v}(0)= \left( v_1^N(0),
v_2^N(0),\dots,v_K^N(0),0,0,\dots \right)= \left(\frac{\cV_1^N(0)}{N},
\frac{\cV_2^N(0)}{N},\dots,\frac{\cV_K^N(0)}{N},0,0,\dots \right)
 \in \myNz\]
and the lightning rate function $\lambda(t)$ fixed (independently
of $N$). Such a sequence is called
\begin{itemize}
\item subcritical if $\mu(N) \equiv 1$
\item critical if $\frac{1}{N} \ll \mu(N) \ll 1$
\item alternating if $\mu(N)=\frac{1}{N}$.
\end{itemize}
If $v_k(0)=\ind_{ \{k=1\}} \cdot m_0(0)$ then the initial state is
called \emph{monodisperse}, otherwise it is \emph{polydisperse}.
\end{definition}

We are going to describe the time evolution of the limit object
\begin{equation}\label{limit_object_vague}
\lim_{N \to \infty} v_k^N(t)=v_k(t).
\end{equation}
 We introduce differential equations to characterize the limiting
component size distributions  $v_k(t)$  where $k \in \N$ and $t \in \R_+$. They are modifications
of the Smoluchowski coagulation equation with multiplicative rate kernel:
\begin{align}
\label{flory_ode}
\dot{c}_k(t)&=\frac12 \sum_{l=1}^{k-1} l\cdot (k-l) \cdot c_l(t) \cdot c_{k-l}(t) -c_k(t) \sum_{l=1}^{\infty} l\cdot c_l(0)
 \quad & \text{Flory's model}\\
 \label{stockmayer_ode}
  \dot{c}_k(t)&=\frac12 \sum_{l=1}^{k-1} l\cdot (k-l) \cdot c_l(t) \cdot c_{k-l}(t) -c_k(t) \sum_{l=1}^{\infty} l\cdot c_l(t)
 \quad & \text{Stockmayer's model}
\end{align}
If we let $v_k(t)=k\cdot c_k(t)$ then \eqref{flory_ode} becomes
\begin{equation}\label{smoluchowki_ER}
 \dot{v}_k(t)=\frac{k}{2} \sum_{l=1}^{k-1} v_l(t) v_{k-l}(t) - k\cdot v_k(t)\cdot \sum_{k=1}^{\infty} v_k(0)
 \end{equation}
We are going to use the formulation \eqref{smoluchowki_ER} rather than the classical \eqref{flory_ode}.

The differential equations \eqref{smoluchowki_ER}  describe the time evolution of $\left(v_k(t)\right)_{k=1}^{\infty}$  defined by \eqref{limit_object_vague} for the dynamical
\ER random graph process (see \cite{aldous_smol_survey}). If we only look at the evolution of the component size vector
$\conf{\cV}(t)$ in the dynamical \ER random graph model, we get the Marcus-Lushnikov process (see \cite{Lushnikov}) with multiplicative kernel
 which is
the $\mu(N) \equiv 0$ case of our model (no deletions, only coagulations).

\begin{definition}
If $\big(v_k\big)_{k=1}^{\infty}=\conf{v} \in \myN$ let
\[m_0:=\sum_{k\geq 1} v_k \qquad
 m_1:= \sum_{k \geq 1} k v_k \qquad
m_2:= \sum_{k \geq 1} k^2 v_k \qquad
 m_3:= \sum_{k \geq 1} k^3 v_k\]

\end{definition}
\begin{remark}
Our definition of the moments $m_n$ differs from the convention of
the literature of the Smoluchowski equation by a shift of indices.
\end{remark}

If we define
 \be\label{frozen_evolution_for_N}
w_k^N(t):=\sum_{l=1}^k v_l^N(t) \quad \quad \text{ and } \quad
\quad \Phi^N(t):=\sum_{l\geq1}v_l^N(0) -\sum_{l\geq 1} v_l^N(t)=
m_0^N(0)-m_0^N(t)
\ee
 then for all $k$ the
random function $w_k^N(t)$ is decreasing and $\Phi^N(t)$ (the mass
of burnt vertices) is increasing.

It might happen (e.g. in the case of the  \ER model) that
\[
 \theta(t):= \lim_{k \to \infty} \lim_{N \to \infty} \left(m_0^N(t)-w_k^N(t)\right) \neq
 \lim_{N \to \infty} \lim_{k \to \infty} \left( m_0^N(t)-w_k^N(t)\right) =0.
 \]
In this case the mass missing from the small components is contained in a giant component of mass $0<\theta(t)$.

\begin{definition}\label{def_gelation}
If $\conf{v}(t)$ is a solution of
\eqref{smoluchowki_ER}, we define the gelation time
by \[\gt:= \inf \{t: m_1(t) = +\infty \}.\]
\end{definition}
It is well-known from the theory of the Smoluchowski coagulation
equation that an alternative characterisation of the gelation time
is \[\gt= \inf \{t: m_0(t) < m_0(0) \}.\]

For the solution of \eqref{smoluchowki_ER} the gelation time is $\gt=\frac{1}{m_1(0)}$, the mass of the giant component is
$\theta(t)=m_0(0)-m_0(t)$. $\conf{v}(t)$ undergoes a phase transition:
\begin{itemize}

\item
For $0\le t< \gt$ the system is subcritical:
$\theta(t)=0$ and $k\mapsto v_k(t)$ decay exponentially
with $k$.

\item
For $\gt <t$ the system is supercritical:
$\theta(t)>0$ and $k\mapsto v_k(t)$ decay exponentially
with $k$.
  Further on: $t\mapsto \theta(t) $ is smooth and
strictly increasing with  $\lim_{t\to\infty} \theta(t)=m_0(0)$.

\item
Finally, at  $t= \gt$ the system is critical:
$\theta(t)=0$ and
\begin{equation}
\label{erdos_renyi_critical_exponent}
 \sum_{k=K}^{\infty} v_k(\gt)\asymp
K^{-1/2} \quad \text{as $\quad K\to\infty$}.
\end{equation}

\end{itemize}

Our aim is to understand in similar terms the asymptotic behavior
of the system when, beside the Erd\H os-R\'enyi coagulation
mechanism, deletions due to lightnings also take place.

\begin{definition}\label{definition_general_frozen_eqs}
 We say that
$\conf{v}(t)= \left( v_k(t) \right)_{k=1}^{\infty} \in \myN$
solves the general frozen percolation equation on $\lbrack 0,
T\rbrack$ with initial condition $\conf{v}(0) \in \myNz$, a
continuous nonnegative rate function $\lambda: \R_+ \to \R_+$ and
 control function $\Phi:
\R_+ \to \R_+$ if
 \be
\label{control_function_Phi_increasing_bounded}
 \forall \; 0 \leq s \leq t \leq T \quad 0 \leq \Phi(0)\leq \Phi(s)
 \leq \Phi(t) < m_0(0)
 \ee
and for all $k=1,2,\dots$ the equations \be
\label{general_frozen_eqs} v_k(t)=v_k(0)+\int_{0}^t \frac{k}{2}
\sum_{l=1}^{k-1} v_l(s) v_{k-l}(s) -k v_k(s) \left(
(m_0(0)-\Phi(s))+\lambda(s)\right)ds
 \ee
 and the inequality
 \be \label{theta_def_nonnegative_general_frozen_eq}
 \forall t \quad 0 \leq
\theta(t):=m_0(0)-m_0(t)-\Phi(t)\ee is satisfied.
\end{definition}
It is easy to see by induction that the absolutely continuous
functions $v_1(t),v_2(t),\dots$ are completely determined by
\eqref{general_frozen_eqs}, the
initial condition $\conf{v}(0)$ and the functions $\lambda$ and
$\Phi$. The only reason why we do not write
\begin{equation}\label{general_frozen_ODE}
 \dot{v}_k(t)=\frac{k}{2}
\sum_{l=1}^{k-1} v_l(t) v_{k-l}(t) -k v_k(t) \left(
(m_0(0)-\Phi(t))+\lambda(t)\right)
 \end{equation}
instead of \eqref{general_frozen_eqs} is that the increasing
function $\Phi(t)$ might have jumps.

There are three versions of the general frozen percolation
equation corresponding to the three regimes on Definition
\ref{def_sub_alt_crit}:


\begin{itemize}
\item The \emph{subcritical system of integral  equations}
are \eqref{general_frozen_eqs} with the extra conditions $\forall
t \; 0< \lambda_{inf} \leq \lambda(t)$ and
\be\label{subcrit_Phi_def}
 \Phi(t) \equiv m_0(0)-m_0(t).\ee
That is $\theta(t)\equiv 0$ by \eqref{theta_def_nonnegative_general_frozen_eq} (no giant components appear due to frequent lightnings)
 and the equations take on the form
 \be \label{subcrit_smol_frozen_integal_eq} v_k(t)= v_k(0)+
\int_0^t \frac{k}{2} \sum_{l=1}^{k-1} v_l(s)v_{k-l}(s) -k \cdot
v_k(s)m_0(s) -\lambda(s) k \cdot v_k(s) ds \ee
The term $-\lambda(s) k \cdot v_k(s)$ indicates that in the subcritical regime even small components
are burnt with a rate proportional to their sizes and $\lambda(s)$.


\item The \emph{critical equations} are \eqref{general_frozen_eqs}
 with the extra
conditions $\lambda(t)\equiv 0$ and \eqref{subcrit_Phi_def}:
 \be
\label{crit_smol_frozen_integral_eq}  v_k(t)= v_k(0)+ \int_0^t
\frac{k}{2} \sum_{l=1}^{k-1} v_l(s)v_{k-l}(s) -k \cdot
v_k(s)m_0(s) ds \ee
$\lambda(t)\equiv 0$ indicates that in the critical regime lightnings are not
frequent enough to do any harm to small components, but \eqref{subcrit_Phi_def} indicates that
they are frequent enough to keep the mass of the giant component at zero.

\item Let $0=\bt_0 < \bt_1 < \bt_2<\dots$ be a sequence with no
accumulation points. Let \be \label{def_of_M_counting} M(t):=\max
\lbrace i: \bt_i < t \rbrace \ee   $\conf{v}(t)$ solves the
\emph{alternating equations} with burning times $\bt_1,\bt_2,\dots$ if
\be \label{alter_smol_frozen_eq} \dot{v}_k(t)=\frac{k}{2}
\sum_{l=1}^{k-1} v_l(t)v_{k-l}(t) -k \cdot v_k(t)m_0(\bt_{M(t)})
 \ee
\end{itemize}

Mind the difference between \eqref{smoluchowki_ER} and \eqref{crit_smol_frozen_integral_eq}:
in the case of the \ER model the small components are allowed to coagulate with the giant component (which is of size
$\theta(t)=m_0(0)-m_0(t)$ by $\Phi(t) \equiv 0$ and \eqref{theta_def_nonnegative_general_frozen_eq}),
 but in the case of the frozen percolation model the giant components are removed at the time of their birth.
Using the terminology of the theory of Smoluchowski coagulation equations we might say that in the case of
\eqref{smoluchowki_ER} the gel and
the sol do react in the post-gel phase (Flory's model, \eqref{flory_ode}), but in the case of \eqref{crit_smol_frozen_integral_eq} they do not
react (Stockmayer's model, \eqref{stockmayer_ode}).
Nevertheless, for $t \leq \gt$ the solutions of \eqref{smoluchowki_ER} and \eqref{crit_smol_frozen_integral_eq} are identical
since $m_0(t)=m_0(0)$ in this regime.

The intuitive meaning of \eqref{alter_smol_frozen_eq} is that giant components are removed
from the system at the burning times.

Thus \eqref{alter_smol_frozen_eq} is \eqref{general_frozen_eqs}
with
\begin{align}\label{alt_theta_def}
 \theta(t)&=m_0(\bt_{M(t)})-m_0(t)\\
\label{alter_Phi_reformulations} \Phi(t)&=m_0(0)-m_0(\bt_{M(t)})=
 m_0(0)-m_0(t)-\theta(t)=
\sum_{j=1}^{M(t)}
  \theta(\bt_j)
  \end{align}
  Both $\theta(t)$ and $\Phi(t)$ are left-continuous functions of
  $t$.

Note that in the case of the (sub)critical frozen percolation
equations (\eqref{subcrit_smol_frozen_integal_eq} and \eqref{crit_smol_frozen_integral_eq}) the fact that $\Phi(t)$ is an increasing function automatically
 follows by \eqref{subcrit_Phi_def}:
\begin{multline*}
\Phi(t)-\Phi(s)=m_0(s)-m_0(t)=\\
\sum_{k=1}^{\infty} \int_s^t -\frac{k}{2} \sum_{l=1}^{k-1} v_l(u)v_{k-l}(u) +k \cdot
v_k(u)m_0(u) +\lambda(u)\cdot k \cdot v_k(u)\, du=\\
\lim_{N \to \infty} \int_s^t \sum_{k=1}^N \sum_{l=N-k+1}^{\infty}
k \cdot v_k(u)v_l(u) +\lambda(u) \cdot k \cdot v_k(u) \, du \geq 0
\end{multline*}

\begin{theorem}
$ $

\begin{itemize}
\item For any $\conf{v}(0) \in \myNz$ and $0<\lambda_{inf} \leq
\lambda(t)$ the equations \eqref{subcrit_smol_frozen_integal_eq}
have a unique solution.
\item For any $\conf{v}(0) \in \myNz$ the equations \eqref{crit_smol_frozen_integral_eq}
have a unique solution.
\item For any $\conf{v}(0) \in \myNz$ and
any sequence of burning times the equations
\eqref{alter_smol_frozen_eq} have a unique solution.
\end{itemize}
\end{theorem}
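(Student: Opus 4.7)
The plan for all three cases starts from the remark recorded after Definition~\ref{definition_general_frozen_eqs}: once $\lambda(\cdot)$ and $\Phi(\cdot)$ are fixed, equation~\eqref{general_frozen_eqs} is a triangular system of linear scalar inhomogeneous ODEs in the index $k$, and each $v_k(t)$ admits an explicit integrating-factor representation in terms of $v_1,\dots,v_{k-1}$ and the initial data. Thus the whole problem reduces to determining $\Phi$ self-consistently and then verifying that the resulting $v_k$'s lie in $\myN$ (nonnegativity and summability) and satisfy $\theta\ge 0$.

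For the alternating system~\eqref{alter_smol_frozen_eq} I would argue inductively over the intervals $[\bt_i, \bt_{i+1})$: on each such interval the coefficient $m_0(\bt_{M(t)}) = m_0(\bt_i)$ is a constant already determined by the solution on $[0, \bt_i]$, so the equation becomes a Flory-type Smoluchowski system with constant exogenous death rate, immediately handled by the triangular recursion. The finite-index components $v_k$ are set by left-continuity at $\bt_{i+1}$, namely $v_k(\bt_{i+1}) := v_k(\bt_{i+1}^-)$, while $\Phi$ jumps by $\theta(\bt_{i+1}) = m_0(\bt_i) - m_0(\bt_{i+1})$, absorbing the mass that gelled during the interval. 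Since the burn times have no accumulation point, concatenating intervals produces a unique global solution.

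For the subcritical and critical equations~\eqref{subcrit_smol_frozen_integal_eq}--\eqref{crit_smol_frozen_integral_eq} the constraint $\Phi(t) = m_0(0) - m_0(t)$ couples $\Phi$ to $m_0(t) = \sum_{k\ge 1} v_k(t)$, so I would set up a fixed-point argument on $\psi(t) := m_0(t)$. Fix $T > 0$ and let $\cX := \{\psi \in C([0,T],\R): 0 \le \psi \le m_0(0),\ \psi \text{ non-increasing}\}$. Given $\psi \in \cX$, put $\Phi_\psi := m_0(0) - \psi$ in~\eqref{general_frozen_eqs}, construct $v_k^\psi(t)$ by the triangular induction, and define $F(\psi)(t) := \sum_{k\ge 1} v_k^\psi(t)$; a solution is precisely a fixed point of $F$ in $\cX$. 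Existence would follow from Schauder's theorem after showing $F$ to be well-defined, continuous, and to map $\cX$ into an equicontinuous subfamily---the uniform bound $\psi \le m_0(0)$ gives uniform control of $v_k^\psi$ through the explicit integrating factors. Uniqueness I would obtain by subtracting two candidate solutions, taking a weighted $\ell^1$ norm $\sum_k e^{-\alpha k} k\,|v_k(t) - \tilde v_k(t)|$, and choosing $\alpha$ large enough (relative to $T\cdot m_0(0)$) so that the convolution contribution is controlled by the same norm, closing a standard Gronwall loop. The subcritical bound $\lambda \ge \lambda_{inf} > 0$ contributes extra exponential dissipation that makes this step essentially automatic.

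The main technical obstacle is the critical case, where the absence of $\lambda$ removes the obvious source of dissipation and where we must verify that the fixed point $\psi$ actually coincides with $\sum_k v_k^\psi$ rather than strictly exceeding it---this is precisely the Flory/Stockmayer dichotomy alluded to earlier in the text. Concretely, one has to justify the interchange $\frac{d}{dt}\sum_k v_k^\psi = \sum_k \dot v_k^\psi$ uniformly on $[0,T]$, which in general fails for multiplicative Smoluchowski systems past the gelation time. My plan is to first establish the identity for the truncations $W_K(t) := \sum_{k\le K} v_k^\psi(t)$, where the interchange is automatic, and then use the explicit integrating-factor formulas together with the uniform bound on $\psi + \lambda$ to control the tail $\sum_{k > K} k\,v_k^\psi(t)$ uniformly in $t \in [0,T]$ before letting $K \to \infty$.
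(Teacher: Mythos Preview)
Your treatment of the alternating system matches the paper's (Lemma~\ref{alter_eq_well_posed}): induction over burning intervals plus the triangular Picard--Lindel\"of recursion.

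For the subcritical and critical cases, however, your soft fixed-point/Gronwall strategy has two genuine gaps, and the paper takes a completely different route.

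\emph{Uniqueness.} Your weighted norm $\sum_k e^{-\alpha k} k\,|v_k-\tilde v_k|$ does not control $|m_0-\tilde m_0|=\big|\sum_k(v_k-\tilde v_k)\big|$, because $m_0$ is an \emph{unweighted} sum over all $k$. The cross term $k\,\tilde v_k\,(m_0-\tilde m_0)$ therefore cannot be absorbed into the Gronwall loop, regardless of $\alpha$. Conversely, if you drop the exponential weight so as to recover $|m_0-\tilde m_0|$, the convolution term produces a factor $m_1$, which is infinite past gelation in the critical case. This tension is exactly why the $m_0$-coupling resists direct energy estimates.

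\emph{Existence in the critical case.} Your plan to justify $\frac{d}{dt}\sum_k v_k^\psi=\sum_k\dot v_k^\psi$ by controlling $\sum_{k>K} k\,v_k^\psi(t)$ uniformly in $t$ cannot succeed: for the actual critical solution $m_1(t)=+\infty$ for every $t>\gt$ (this is established in the paper right after Lemma~\ref{crit_eq_existence}), so the tail you want to bound is infinite. Relatedly, for a generic $\psi\in\cX$ there is no reason $F(\psi)=\sum_k v_k^\psi$ is nonincreasing, so even the invariance $F(\cX)\subset\cX$ is unclear.

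The paper avoids all of this by exploiting the integrable structure: the Laplace transform $V(t,x)$ satisfies a first-order PDE solvable by characteristics, yielding the explicit formula \eqref{X_solution_inhom_subcrit} for the inverse $X(t,u)$. Uniqueness (Lemma~\ref{subcrit_integral_eq_uniqueness}) then reduces to a short argument on the Legendre transform $G(t,w)$: the constraint $\theta\equiv 0$ forces $\min_w G(t,w)=0$ at some $w^*(t)\ge 0$, and the difference $G_1-G_2=-w\tilde\Phi(t)-\int_0^t\tilde\Phi$ being affine in $w$ with two nonnegative minimisers forces $\tilde\Phi\equiv 0$. Existence is obtained by \emph{explicitly} constructing $\varphi=\dot\Phi$: in the critical case $\varphi(t)=\ind[t\ge\gt]\,E(0,t)$ where $E$ is the critical core of Definition~\ref{def_G_F_E}; in the subcritical case $\varphi(t)=\lambda(t)/w^*(t)$ with $w^*$ solving the scalar control ODE \eqref{subcritical_control_ODE}. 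One then checks directly (Lemma~\ref{existence_lemma_1}) that this $\Phi$ makes $\min_w G(t,w)=0$ with nonnegative argmin, which is equivalent to $\theta\equiv 0$.
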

We prove this theorem in
 Section \ref{section_wellposed}.

\begin{definition}\label{def_random_alternating_eqs}
The solution of the random alternating equations with rate
function $\lambda: \R_+ \to \R_+$
 is a $\myN$-valued continuous-time Markov process:
$\conf{v}(t)$ evolves deterministically, driven by the equations
\eqref{alter_smol_frozen_eq}, but the sequence of burning times
$\bt_1,\bt_2,\dots$ is random: \be \label{ramdom_alt_burning_time}
\lim_{dt \to 0} \frac{1}{dt} \condprob{ t \leq \bt_{M(t)+1} \leq
t+dt}{\cF_t} =\lambda(t) \theta(t)
 \ee
where $\cF_t$ is the natural filtration generated by the process.
\end{definition}
In plain words: a lightning strikes and burns the giant component
 with rate proportional to its size and $\lambda(t)$.



\begin{definition}\label{def_frozen_percolation_evolution}

\begin{align*}
 \cW & :=\left\{ \left(w_k \right)_{k=1}^{\infty} \, \, : \, \,
0\leq w_1 \leq w_2 \leq \dots <+\infty \right\} \\
 \cW^* & := \left\{ \left(w_k \right)_{k=1}^{\infty} \in \cW \, \, :
\, \, \exists K<+\infty \; \; \forall k  \geq K \,\; w_k=w_K
\right\}
\end{align*}
If $\conf{w} \in \cW$ denote by $m_0:=\sup_k w_k$.

We say that $\left( \left(w_k(\cdot) \right)_{k=1}^{\infty},
\Phi(\cdot) \right)$ is a frozen percolation evolution on $\lbrack
0, T \rbrack$ with initial condition $\left( w_k(0)
\right)_{k=1}^{\infty}=\conf{w} \in \cW^*$, or briefly \[\left(
\left(w_k(\cdot) \right)_{k=1}^{\infty}, \Phi(\cdot) \right) \in
\rbspace_{\conf{w}} \lbrack 0, T \rbrack \]
 if for all
$0\leq t\leq T$ we have $\left(w_k(t)\right)_{k=1}^{\infty} \in
\cW$, for all
 $k$ the
functions $w_k: \lbrack 0, T \rbrack \to \lbrack 0, m_0(0)
\rbrack$ are left-continuous and decreasing, $\Phi: \lbrack 0, T
\rbrack \to \lbrack 0, m_0(0) \rbrack$ is left continuous and
increasing with initial condition $\Phi(0)=0$, moreover for all $t
\leq T$ we have \eqref{theta_def_nonnegative_general_frozen_eq}.

We define convergence on the space $\rbspace_{\conf{w}} \lbrack 0,
T \rbrack$:
 \[\left( \left(w_k^n(\cdot) \right)_{k=1}^{\infty},
\Phi^n(\cdot) \right) \to \left( \left(w_k(\cdot)
\right)_{k=1}^{\infty}, \Phi(\cdot) \right)\]
 as $n \to \infty$ if
for all $k$ we have $w_k^n(t) \to w_k(t)$ for all $t$ which is a
point of continuity of $w_k$ and $\Phi^n(t) \to \Phi(t)$ for all
$t$ which is a point of continuity of $\Phi$.
\end{definition}
With this topology the space $\rbspace_{\conf{w}} \lbrack 0, T
\rbrack$ is metrizable, complete and compact.

 From the frozen
percolation process of Definition \ref{def_process}. one gets a
random element of $\rbspace_{\conf{w}} \lbrack 0, T \rbrack$ by
\eqref{frozen_evolution_for_N}. Denote the probability measure on
$\rbspace_{\conf{w}} \lbrack 0, T \rbrack$ corresponding to the
process by $\Prb_N$.

It is easy to check that $\left( \left(w_k(\cdot)
\right)_{k=1}^{\infty}, \Phi(\cdot) \right) \in
\rbspace_{\conf{w}} \lbrack 0, T \rbrack$ where
$w_k(t)=\sum_{l=1}^k v_l(t)$ and $\conf{v}(t)$ is a solution of
the general frozen percolation equation \eqref{control_function_Phi_increasing_bounded} \&
 \eqref{general_frozen_eqs} \& \eqref{theta_def_nonnegative_general_frozen_eq}.

\begin{theorem}\label{thm_convergence_of_process_to_eqn}
We consider a sequence of frozen percolation processes (see
Definition \ref{def_process}) with initial state $\conf{v}^N (0)=
\conf{v}(0) \in \myNz$ and $\lambda(t)$ positive and continuous.
Define $w_k^N(t)$ and $\Phi^N(t)$ as in
\eqref{frozen_evolution_for_N}. Denote the probability measure on
$\rbspace_{\conf{w}} \lbrack 0, T \rbrack$ corresponding to the
process by $\Prb_N$.

 Then $\Prb_N$ converges with respect to the weak
 convergence of probability measures on the polish space
  $\rbspace_{\conf{w}} \lbrack 0, T \rbrack$ to a limiting measure
  $\Prb$, which depends on the decay rate of $\mu(N)$ in the
  following way:
\begin{itemize}
\item If $\mu(N)\equiv 1$ then $\Prb$ is concentrated on the unique solution of
\eqref{subcrit_smol_frozen_integal_eq} with rate function
$\lambda(t)$.
\item If $\frac{1}{N} \ll \mu(N) \ll 1$ then $\Prb$ is concentrated
 on the unique solution of
\eqref{crit_smol_frozen_integral_eq}.
\item If $\mu(N)= \frac{1}{N}$ then $\Prb$ is the law
of the solution of the random alternating equation (see Definition
\ref{def_random_alternating_eqs}) with rate function $\lambda(t)$.
\end{itemize}
\end{theorem}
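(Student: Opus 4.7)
The plan is to exploit the compactness of $\rbspace_{\conf{w}}\lbrack 0,T\rbrack$ stated just after Definition \ref{def_frozen_percolation_evolution} to replace weak convergence by identification of subsequential limits. Since the space is compact and metrizable, the family $\{\Prb_N\}$ is automatically tight, so it is enough to check that every subsequential weak limit is supported on trajectories satisfying the appropriate equation; combined with the uniqueness statements in the preceding theorem and in Definition \ref{def_random_alternating_eqs}, this upgrades the subsequential convergence to convergence of the full sequence.

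To carry out the identification I would apply the generator of the Markov process of Definition \ref{def_process} to the coordinate functional $v_k^N=\cV_k^N/N$, obtaining the Doob--Meyer decomposition
\[
v_k^N(t)=v_k^N(0)+\int_0^t \Big(\tfrac{k}{2}\sum_{l=1}^{k-1}v_l^N(s)v_{k-l}^N(s)-k\,v_k^N(s)\,m_0^N(s)\Big)ds-\int_0^t \lambda(s)\mu(N)\,k\,v_k^N(s)\,ds+M_k^N(t)+R_k^N(t),
\]
where $R_k^N=\Ordo(1/N)$ gathers the self-coagulation and discreteness corrections coming from \eqref{coag_rate}--\eqref{lightning_rate} and $M_k^N$ is a martingale. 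Since the total jump rate is $\Ordo(N)$ while jumps of $v_k^N$ are of size $\Ordo(1/N)$, the predictable quadratic variation of $M_k^N$ is $\Ordo(1/N)$ on $[0,T]$, and Doob's $L^2$-inequality yields $\sup_{t\le T}|M_k^N(t)|\to 0$ in probability. For each fixed $k$ the coagulation sum has only finitely many terms, each uniformly bounded by $m_0(0)$, so along a subsequential limit bounded convergence lets me pass to the limit inside the compensator and recover the integral form of \eqref{general_frozen_eqs}, with control function $\Phi$ equal to the limit of $\Phi^N=m_0^N(0)-m_0^N$ minus the limiting giant mass $\theta$ (the identity $m_0^N(s)\to m_0(s)+\theta(s)$ being what distinguishes the frozen-percolation sink term from the Marcus--Lushnikov one).

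The three regimes are now distinguished by the behaviour of the lightning compensator and of $\theta$. If $\mu(N)\equiv 1$ the term $\int_0^t\lambda(s)k\,v_k^N(s)\,ds$ converges directly, conservation of mass gives $\Phi(t)=m_0(0)-m_0(t)$ and the argument above rules out a giant as long as $\lambda_{inf}>0$, yielding \eqref{subcrit_smol_frozen_integal_eq}. If $\mu(N)=1/N$, then lightnings striking small components remove only $\Ordo(1/N)$ mass per strike and contribute vanishingly to the compensator, whereas a giant of mass $\theta(t)$ is struck at rate $\lambda(t)\mu(N)\theta(t)N=\lambda(t)\theta(t)$, matching \eqref{ramdom_alt_burning_time}; a standard Kurtz-type martingale-problem argument for jump processes with uniformly bounded rates identifies the limit as the Markov process of Definition \ref{def_random_alternating_eqs}. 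The critical regime $1/N\ll\mu(N)\ll 1$ is the main obstacle: the small-component lightning term $\lambda\mu(N)k v_k^N$ vanishes, so the limit reduces to \eqref{crit_smol_frozen_integral_eq}, but one must exclude a macroscopic giant in the limit, i.e.\ show $\theta\equiv 0$. The mechanism is that whenever the tail mass $m_0^N(t)-w_K^N(t)$ exceeds a fixed $\eps>0$ for large $K$, the total rate at which vertices in components of size $>K$ are struck is at least $\lambda(t)\mu(N)\eps N\to\infty$, so such excess is extinguished on the time-scale $(\mu(N)N)^{-1}\to 0$; a stopping-time argument that iterates this estimate forces $\Phi(t)=m_0(0)-m_0(t)$ in the limit. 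Making this tail-mass control quantitative, and in particular exchanging the two limits in $k$ and $N$ appearing in the definition of $\theta$, is the delicate step; the subcritical and alternating identifications are by comparison routine consequences of the generator method.
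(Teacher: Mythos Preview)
Your overall architecture matches the paper's: compactness of $\rbspace_{\conf{w}}[0,T]$ gives tightness for free, and a generator computation followed by Doob's $L^2$-inequality (the paper's Lemma~\ref{lemma_weaklimit}) shows that every subsequential limit solves the general equation \eqref{general_frozen_eqs} with the appropriate effective $\lambda$. The paper also explicitly omits the alternating case $\mu(N)=1/N$, so your Kurtz-type martingale-problem sketch for that regime is neither confirmed nor contradicted by it.

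There is, however, a real gap in your no-giant heuristic. You write that if the tail mass in components of size $>K$ exceeds $\eps$, then ``the total rate at which vertices in components of size $>K$ are struck is at least $\lambda(t)\mu(N)\eps N$, so such excess is extinguished on the time-scale $(\mu(N)N)^{-1}$''. The first clause is correct, but the second does not follow: a single strike removes only the component of the struck vertex, so if the $\eps N$ excess vertices are spread over $\eps N/K$ components each of size $K$, clearing them requires of order $\eps N/K$ strikes, which at total strike rate $\lambda\mu(N)\eps N$ takes time of order $(\lambda\mu(N)K)^{-1}$. In the critical regime $\mu(N)\to 0$ this does \emph{not} vanish for fixed $K$, and since coagulation continuously replenishes the tail, no simple burning-rate bound suffices. (Note also that $\theta\equiv 0$ is needed in the subcritical case as well, not just the critical one: the limiting compensator contains $m_0(0)-\Phi(s)$ rather than $m_0(s)$, and these agree only when $\theta=0$; the paper accordingly proves Lemma~\ref{no_giant_lemma} uniformly for $1/N\ll\mu(N)$.)

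The paper closes this gap with a two-stage argument that your sketch is missing. First (Lemma~\ref{lemma:almost_giant_or_lot_burnt}), by running a \emph{random characteristic curve} for the Burgers-type equation \eqref{inhom_subcrit_U} satisfied by the generating function, it shows that mass $\eps_2$ in components of size $>K$ must, within time $\Ordo(1/(K\eps_2))$, either burn or coagulate into components of size $>c\,\eps_2 N^{1/3}$. Second (Lemma~\ref{lemma:if_almost_giant_then_big_fire_soon}), a direct cluster-growth submartingale estimate shows that components of size $\gtrsim N^{1/3}$ merge and are struck on a time-scale $\Ordo\big(N^{-1/3}\log N+(N\mu(N))^{-1}\big)$, which does vanish precisely when $1/N\ll\mu(N)$. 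Iterating these two lemmas over many disjoint short time intervals (Lemma~\ref{lemma:disjoint_intervals_big_mass}) forces $\expect{\Phi^N(T)}>m_0(0)$, a contradiction. The intermediate passage through components of order $N^{1/3}$, driven by the Burgers characteristics, is the idea your outline lacks.
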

We prove the $\mu(N)\equiv 1$ and the $\frac{1}{N} \ll \mu(N) \ll
1$ part of this theorem in Section \ref{proof_of_convergence}. In
fact, these proofs are almost identical to the corresponding
convergence results of \cite{forestfiPDE}, but we present them
here as well for the sake of completeness.

 We
omit the proof of the $\mu(N)= \frac{1}{N}$ part of Theorem
\ref{thm_convergence_of_process_to_eqn}., but we believe that the
methods introduced in Section \ref{proof_of_convergence}. can be
easily generalized for this case as well.


If we formally substitute $\lambda(t) \equiv 0$ into
\eqref{subcrit_smol_frozen_integal_eq} or $\bt_{M(t)} \equiv t$
into \eqref{alter_smol_frozen_eq}, we get
\eqref{crit_smol_frozen_integral_eq}. Rigorously:

\begin{theorem}\label{subcrit_converges_to_crit}
Let $\left(\conf{v}^n(t)\right)_{n=1}^{\infty}$  be a sequence of
solutions of \eqref{subcrit_smol_frozen_integal_eq} with the same
initial condition $\conf{v}(0) \in \myNz$ where $\lambda_n(t) \to
0$ uniformly as $n \to \infty$. Then for all $t$ and $k$ $\lim_{n
\to \infty} v_k^n(t)=v_k(t)$ where $\conf{v}(t)$ is the solution
of \eqref{crit_smol_frozen_integral_eq} with the same initial
data. $\lim_{n \to \infty} \Phi_n(t)=\Phi(t)$ uniformly on
$\lbrack 0, \infty)$.
\end{theorem}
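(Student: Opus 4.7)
The plan is to combine compactness of $\rbspace_{\conf{w}}[0,T]$ (noted immediately after Definition \ref{def_frozen_percolation_evolution}) with uniqueness of the critical equation \eqref{crit_smol_frozen_integral_eq} from the preceding Theorem. It suffices to show that every subsequential limit of $\bigl((w_k^n(\cdot))_{k=1}^{\infty},\Phi_n(\cdot)\bigr)$ coincides with the critical solution; in a compact metric space this forces convergence of the entire sequence. As a preliminary, I derive uniform bounds $v_k^n(t)\leq C_k$ on $[0,T]$ by induction on $k$: $v_1^n$ is monotone decreasing, so $v_1^n(t)\leq v_1(0)$, and assuming $v_l^n\leq C_l$ for $l<k$, the subcritical equation \eqref{subcrit_smol_frozen_integal_eq} gives $\dot v_k^n\leq \tfrac{k}{2}\sum_{l=1}^{k-1}C_l C_{k-l}$, hence $v_k^n(t)\leq v_k(0)+\tfrac{kT}{2}\sum_{l=1}^{k-1}C_l C_{k-l}=:C_k$.

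Take a subsequence along which $(w^{n_j}_k,\Phi_{n_j})\to(w^*_k,\Phi^*)$ in the sense of Definition \ref{def_frozen_percolation_evolution} and set $v^*_k:=w^*_k-w^*_{k-1}$. I pass to the limit in \eqref{subcrit_smol_frozen_integal_eq}: the lightning correction satisfies $\int_0^t\lambda_n(s)k v^n_k(s)\,ds\leq kC_kT\sup_s\lambda_n(s)\to 0$; the convolution term $\int_0^t\tfrac{k}{2}\sum_{l=1}^{k-1}v^n_l v^n_{k-l}\,ds$ converges by bounded convergence on a finite sum; and, crucially, the subcritical identity $m^n_0(s)=m_0(0)-\Phi_n(s)$ from \eqref{subcrit_Phi_def} rewrites the drift term as $-\int_0^t k v^n_k(s)(m_0(0)-\Phi_n(s))\,ds$, which converges to $-\int_0^t k v^*_k(s)(m_0(0)-\Phi^*(s))\,ds$ at continuity points of $\Phi^*$. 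Thus $(v^*_k,\Phi^*)$ satisfies the general frozen equation \eqref{general_frozen_eqs} with $\lambda\equiv 0$ and control function $\Phi^*$. To identify it with the critical solution I still have to establish $\theta^*(t)\equiv 0$, i.e.\ $\sum_k v^*_k(t)=m_0(0)-\Phi^*(t)$. This is the main obstacle: Fatou yields $\sum_k v^*_k\leq m_0(0)-\Phi^*$, but the reverse inequality amounts to uniform tightness $\sup_n\sum_{k>K}v^n_k(t)\to 0$ as $K\to\infty$, i.e.\ ruling out that mass escapes to $+\infty$ in the limit. The natural route is to exploit the mass-balance identity $m_0^n(t)=m_0(0)-\int_0^t\lambda_n(s)m^n_1(s)\,ds$ obtained by summing \eqref{subcrit_smol_frozen_integal_eq} over $k$, together with a control on $m^n_1$ via the moment hierarchy or via the Laplace transform $L^n(t,x):=\sum_k v^n_k(t)e^{-kx}$, which satisfies a first order PDE that can be analyzed by the method of characteristics. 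Once $\theta^*\equiv 0$ is established, $(v^*_k,\Phi^*)$ solves \eqref{crit_smol_frozen_integral_eq} and equals $(v_k,\Phi)$ by uniqueness.

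For the uniform convergence of $\Phi_n$ on $[0,\infty)$, observe that the critical $\Phi$ is continuous: it vanishes on $[0,\gt)$ and for $t\geq\gt$ its derivative is the bilinear mass-flux-to-infinity of \eqref{crit_smol_frozen_integral_eq}, which is locally bounded. Since each $\Phi_n$ is monotone increasing and $\Phi$ is continuous, Pólya's theorem promotes the pointwise convergence just established to uniform convergence on every compact interval $[0,T]$. To extend this to $[0,\infty)$ use $0\leq\Phi_n,\Phi\leq m_0(0)$ together with the fact that both $\Phi_n(t)$ and $\Phi(t)$ increase to $m_0(0)$ as $t\to\infty$ (all mass is eventually burnt in both regimes); a standard three-epsilon argument comparing the behavior on $[0,T]$ and on $[T,\infty)$ for large $T$ then delivers uniform convergence on $[0,\infty)$.
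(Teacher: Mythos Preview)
Your compactness-plus-uniqueness strategy is genuinely different from the paper's direct approach, and the gap you yourself flag --- showing $\theta^*\equiv 0$ --- is real and unresolved in your write-up.  You gesture at controlling $m_1^n$ to close it, but this cannot work as stated: by Lemma~\ref{subcrit_w_star_almost_phi} one has $m_1^n(t)=1/w^*_n(t)\sim E(0,t)/\lambda_n\to\infty$ for $t>\gt$, so there is no uniform $m_1^n$ bound.  What is bounded is the product $\lambda_n m_1^n=\varphi_n$, and establishing that requires precisely the control-ODE analysis \eqref{subcritical_control_ODE} that the paper develops.  Your ``method of characteristics'' suggestion is the right instinct, but you would have to actually carry it out: the uniform convexity $X_n''(t,u)\geq 1/\phisup$ (from \eqref{upper_lower_bound_on_E}) gives $|U_n(t,x)|\leq\sqrt{2\phisup\,x}$ uniformly in $n$, which via \eqref{tail_to_gf} yields $\sum_{k>K}v_k^n(t)\leq C/\sqrt{K}$ and hence the tightness you need.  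Without this, the proof does not close.

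The paper bypasses tightness entirely.  It uses the explicit formula $\Phi_n(t)=F(0,t+w^*_n(t))$ from \eqref{Phi_and_F} together with $\Phi_{crit}(t)=F(0,t)$ from Lemma~\ref{crit_eq_existence}, so that $0\leq\Phi_n(t)-\Phi_{crit}(t)\leq w^*_n(t)\,\phisup$.  Then it shows $w^*_n(t)=\Ordo(\lambda_n\log(1/\lambda_n))\to 0$ uniformly on $[\gt,T]$ by comparing the control ODE \eqref{subcritical_control_ODE} with the explicitly solvable $\dot y=\lambda_n/(\phiinf y)-1$ via Lemma~\ref{lemma_Lambert_W}.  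Convergence of $v_k^n$ then follows because $v_k^n$ is determined by $\Phi_n$ and $\lambda_n$ through \eqref{subcrit_smol_frozen_integal_eq}.  For the extension to $[0,\infty)$ the paper uses the clean tail estimate $m_0(T)\leq 1/T$ from \eqref{upper_lower_bound_on_E}, which gives $\sup_{t\geq T}|\Phi_n(t)-\Phi_{crit}(t)|\leq 1/T$ directly, rather than your three-epsilon argument.
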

In plain words: if the rate of lightning is very small in the subcritical equations,
 then the solution is similar to that of the critical equation.
We prove this theorem in Section \ref{section_proof_of_sub}.

\begin{theorem}\label{alternating_converges_to_crit}
Let $\left(\conf{v}^n(t)\right)_{n=1}^{\infty}$  be a sequence of
solutions of \eqref{alter_smol_frozen_eq} with the same initial
condition $\conf{v}(0)$ where the sequence of burning times
satisfy \[\lim_{n \to \infty} \sup_i \{
 \bt_{i+1}(n)-\bt_{i}(n)
 \}=0.\] Then for all $t$ and $k$ $ \lim_{n \to \infty}
v_k^n(t)=v_k(t)$ where $\conf{v}(t)$ is the solution of
\eqref{crit_smol_frozen_integral_eq} with the same initial data.
$\lim_{n \to \infty} \Phi_n(t)=\Phi(t)$ uniformly on $\lbrack 0,
\infty)$.
\end{theorem}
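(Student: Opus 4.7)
The plan is to parallel the proof of Theorem \ref{subcrit_converges_to_crit}: extract a subsequential limit from the compact space $\rbspace_{\conf{v}(0)}\lbrack 0, T\rbrack$, pass to the limit in the integrated form of \eqref{alter_smol_frozen_eq}, and identify every limit as the unique solution of \eqref{crit_smol_frozen_integral_eq} via the uniqueness theorem following Definition \ref{def_random_alternating_eqs}. Since every subsequential limit then coincides, the whole sequence converges on each compact time interval.

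The quantitative heart of the argument is the $L^1$-smallness
\begin{equation*}
\int_0^T \theta^n(s)\, ds \leq \delta_n \cdot m_0(0), \qquad \delta_n := \sup_i \{\bt_{i+1}(n)-\bt_i(n)\} \to 0.
\end{equation*}
I would derive this by noting that on each inter-burning interval $(\bt_i(n), \bt_{i+1}(n)\rbrack$ the coefficient $m_0^n(\bt_{M(s)}(n))=m_0^n(\bt_i(n))$ is constant, and $m_0^n$ is non-increasing, so by \eqref{alt_theta_def} the function $\theta^n(s)=m_0^n(\bt_i(n))-m_0^n(s)$ grows from $0$ to at most $\theta^n(\bt_{i+1}(n))$ on the interval. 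Integrating, summing over $i$, and using the first equality in \eqref{alter_Phi_reformulations} (so that $\sum_i \theta^n(\bt_i(n))\leq m_0(0)$) gives the bound.

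With this in hand I would split the integrated version of \eqref{alter_smol_frozen_eq} as
\begin{equation*}
v_k^n(t)=v_k(0)+\int_0^t \frac{k}{2}\sum_{l=1}^{k-1} v_l^n(s) v_{k-l}^n(s)\,ds - k\int_0^t v_k^n(s) m_0^n(s)\,ds - k\int_0^t v_k^n(s)\,\theta^n(s)\,ds,
\end{equation*}
using $m_0^n(\bt_{M(s)}(n))=m_0^n(s)+\theta^n(s)$. The last integral is bounded by $k\cdot m_0(0)^2\cdot \delta_n$ and vanishes in the limit. For any subsequential limit $(w_k,\Phi)$, the pointwise convergence $w_k^n\to w_k$ at every continuity point forces $v_k^n\to v_k$ and $m_0^n\to m_0$ at almost every $s\in\lbrack 0, T\rbrack$; all integrands are dominated by $m_0(0)$ or $m_0(0)^2$, so dominated convergence produces exactly \eqref{crit_smol_frozen_integral_eq}. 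Combining the mass balance $m_0^n(t)+\Phi^n(t)+\theta^n(t)=m_0(0)$ with $\theta^n\to 0$ in $L^1$ yields $\Phi(t)=m_0(0)-m_0(t)$ for a.e.\ $t$, and hence for all $t$ by left-continuity of both sides, matching \eqref{subcrit_Phi_def}. Uniqueness then pins down the limit.

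The main obstacle is precisely the uniform-in-$n$ control of $\theta^n$ encoded in the $L^1$ estimate above: it is the one place where the hypothesis $\delta_n\to 0$ enters, and without it one cannot legally replace $m_0^n(\bt_{M(s)}(n))$ by $m_0^n(s)$ inside the integral. Everything else (compactness, dominated convergence, uniqueness) is routine. For the final claim of uniform convergence $\Phi^n\to\Phi$ on $\lbrack 0,\infty)$, I would invoke a Dini-type argument: the $\Phi^n$ are monotone and converge pointwise to the continuous, monotone $\Phi$, and both tend to $m_0(0)$ at infinity (since in the critical solution $m_0(t)\to 0$ and $\theta^n$ is small), so pointwise monotone convergence upgrades to uniform convergence on the whole half-line.
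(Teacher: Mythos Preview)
Your approach is genuinely different from the paper's, and the core idea---the $L^1$ estimate $\int_0^T\theta^n(s)\,ds\le \delta_n\,m_0(0)$---is correct and elegant. But there is a real gap at the step where you pass from the mass balance to $\Phi(t)=m_0(0)-m_0(t)$ in the limit.

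The problem is your assertion that ``$w_k^n\to w_k$ at continuity points forces $m_0^n\to m_0$ a.e.''. Convergence in $\rbspace_{\conf w}[0,T]$ gives $w_k^n\to w_k$ for each \emph{fixed} $k$, hence $v_k^n\to v_k$ a.e., but $m_0^n=\sup_k w_k^n$ need not converge to $\sup_k w_k=m_0$: exchanging $\lim_n$ and $\sup_k$ requires a uniform tail bound $\sup_n\sum_{l>k}v_l^n(t)\to 0$ as $k\to\infty$, which you have not established. Equivalently, from $m_0^n+\Phi^n+\theta^n=m_0(0)$, $\Phi^n\to\Phi$ a.e.\ and $\theta^n\to 0$ in $L^1$ (hence a.e.\ along a further subsequence), you get $m_0^n\to m_0(0)-\Phi$ a.e.; but showing that this limit equals $\sum_k v_k$ is exactly the interchange that needs justification. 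Without it, you only know the subsequential limit solves the \emph{general} frozen equation \eqref{general_frozen_eqs} with $\lambda\equiv 0$ and control $\Phi$, not that $\theta\equiv 0$, so you cannot invoke the uniqueness of the critical solution. The gap can be repaired by Theorem~\ref{exp_tilt_rigidity}, which gives $v_k^n(t)\le v_k^{\mathrm{crit}}(t)$ for $t\ge\gt$ and hence the needed uniform tail bound, but you should say so.

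By contrast, the paper avoids compactness entirely and argues directly via the Laplace--Legendre machinery: from $w^*(t)=w^*_+(\bt_i)-(t-\bt_i)$ on $(\bt_i,\bt_{i+1}]$ one reads off $w^*(t)\ge -\delta_n$, and Lemma~\ref{lemma_alter_bounds} (specifically \eqref{global_alter_wstar_bound}) then gives $w^*_+(\bt_i)=O(\delta_n)$, so $|w^*_n(t)|=O(\delta_n)$ for $t\ge\gt$. The identity $\Phi_n(t)=F(0,t+w^*_n(t))$ together with $\Phi_{\mathrm{crit}}(t)=F(0,t)$ then yields the quantitative uniform bound $|\Phi_n(t)-\Phi_{\mathrm{crit}}(t)|\le \phisup\,|w^*_n(t)|=O(\delta_n)$, exactly as in the proof of Theorem~\ref{subcrit_converges_to_crit}. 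This route is shorter, gives a rate, and sidesteps the tightness issue that trips up your argument.
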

In plain words: if the burning times of the alternating equations are very frequent,
then the solution is similar to that of the critical equation.
We prove this theorem in Section \ref{section_proof_of_alt}.

 The solution of \eqref{crit_smol_frozen_integral_eq} has
the self-organized critical property: for all $\gt \leq t$
 it has the power-law decay of \eqref{erdos_renyi_critical_exponent}:
\begin{theorem}\label{tauberian_smol}
If $\conf{v}(t)$ is a solution of
\eqref{crit_smol_frozen_integral_eq} with initial condition
$\conf{v}(0) \in \myNz$, then $\gt=\frac{1}{m_1(0)}$,
$\Phi(t)=\int_{\gt}^t \varphi_{crit}(s)ds$ where $\varphi_{crit}:
\lbrack \gt,+\infty) \to R_+ $ is positive and continuous, and for
all $t\geq \gt$ we have \be \label{powerlaw_decay} \lim_{K \to
\infty} K^{\frac12}\sum_{k=K}^{\infty} v_k(t)=\sqrt{\frac{2
\varphi_{crit}(t)}{\pi}}.\ee
\end{theorem}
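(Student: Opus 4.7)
The plan is to study the evolution through the Laplace-type generating function
\[
\psi(t,\zeta):=\sum_{k\ge 1}v_k(t)(1-e^{-k\zeta})=m_0(t)-\sum_{k\ge 1}v_k(t)\,e^{-k\zeta},\qquad \zeta\ge 0,
\]
which by construction satisfies $\psi(t,0)=0$, $\partial_\zeta\psi(t,0^+)=m_1(t)$, and $\psi(t,+\infty)=m_0(t)$. Differentiating the defining series term by term, using \eqref{crit_smol_frozen_integral_eq} together with the convolution identity $\sum_{k}k\sum_{l=1}^{k-1}v_lv_{k-l}\,e^{-k\zeta}=2(m_0(t)-\psi)\partial_\zeta\psi$, one obtains the quasi-linear first order PDE
\[
\partial_t\psi-\psi\,\partial_\zeta\psi\;=\;\dot m_0(t)\;=:\;-\varphi_{crit}(t),\qquad \varphi_{crit}(t):=-\dot m_0(t)\ge 0,
\]
with boundary data $\psi(t,0)\equiv 0$ and initial data $\psi_0(\zeta):=\sum_k v_k(0)(1-e^{-k\zeta})$; the latter is entire and strictly concave in $\zeta$ because $\conf{v}(0)\in\myNz$, with $\psi_0'(0)=m_1(0)\in(0,\infty)$ and $\psi_0'(\zeta)\downarrow 0$ as $\zeta\to\infty$.

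Next I would solve this PDE by the method of characteristics: along a characteristic emitted from $(0,\zeta_0)$,
\[
\psi(t)=\psi_0(\zeta_0)-\Phi(t),\qquad \zeta(t)=\zeta_0-t\,\psi_0(\zeta_0)+\int_0^t\Phi(s)\,ds.
\]
The boundary condition $\psi(t,0)=0$ must be enforced along a distinguished characteristic $\zeta_0=\zeta_0^*(t)$ reaching $\zeta=0$ at time $t$, which forces the two identities $\zeta_0^*(t)=t\Phi(t)-\int_0^t\Phi(s)\,ds$ and $\Phi(t)=\psi_0(\zeta_0^*(t))$. Differentiating them in $t$ and eliminating $d\zeta_0^*/dt$ yields the envelope condition $t\psi_0'(\zeta_0^*(t))=1$, so $\zeta_0^*(t)=(\psi_0')^{-1}(1/t)$, which is well-defined precisely for $t\ge 1/m_1(0)$ because $\psi_0'$ is continuous and strictly decreasing from $m_1(0)$ to $0$. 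This gives
\[
\gt=\tfrac{1}{m_1(0)},\qquad \Phi(t)=\psi_0\bigl(\zeta_0^*(t)\bigr)\ \text{for}\ t\ge\gt,\qquad \Phi\equiv 0\ \text{on}\ [0,\gt].
\]
Differentiating $\psi_0'(\zeta_0^*(t))=1/t$ gives $d\zeta_0^*/dt=-1/(t^2\psi_0''(\zeta_0^*))>0$ because $\psi_0''<0$, hence
\[
\varphi_{crit}(t)=\Phi'(t)=\psi_0'(\zeta_0^*(t))\cdot\frac{d\zeta_0^*}{dt}=-\frac{1}{t^3\,\psi_0''(\zeta_0^*(t))}>0,
\]
which is continuous on $[\gt,\infty)$, establishing the stated representation of $\Phi$.

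For \eqref{powerlaw_decay}, I expand $\psi(t,\zeta)$ around $\zeta=0$ for fixed $t>\gt$. Writing $\zeta_0=\zeta_0^*(t)+\xi$ and Taylor-expanding $\psi_0$ around $\zeta_0^*$, the envelope condition cancels the linear-in-$\xi$ term of the characteristic equation for $\zeta$, leaving
\[
\zeta=\tfrac12\,t\,|\psi_0''(\zeta_0^*)|\,\xi^2+O(\xi^3),\qquad \psi(t,\zeta)=\frac{\xi}{t}+O(\xi^2).
\]
Eliminating $\xi$ gives the key asymptotic $\psi(t,\zeta)\sim\sqrt{2\,\varphi_{crit}(t)\,\zeta}$ as $\zeta\to 0^+$. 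Since $\psi(t,\zeta)=\int_0^\infty(1-e^{-\zeta x})\,d\mu_t(x)$ for the positive measure $\mu_t:=\sum_k v_k(t)\,\delta_k$, whose tail $\mu_t([K,\infty))$ is monotone non-increasing in $K$, Karamata's Tauberian theorem with index $1/2$ converts this square-root behaviour directly into
\[
\sum_{k\ge K}v_k(t)=\mu_t\bigl([K,\infty)\bigr)\sim\frac{\sqrt{2\varphi_{crit}(t)}}{\Gamma(1/2)}\,K^{-1/2}=\sqrt{\tfrac{2\varphi_{crit}(t)}{\pi}}\,K^{-1/2},
\]
which is exactly \eqref{powerlaw_decay}.

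The main technical obstacle is the rigorous justification that the envelope selection above produces the \emph{unique} solution of \eqref{crit_smol_frozen_integral_eq}: one must verify that the $\psi$ constructed from $\Phi(t)=\psi_0(\zeta_0^*(t))$ is indeed a nonnegative smooth solution of the PDE on $\{\zeta>0\}$ honouring the boundary condition, and that any other choice of $\Phi$ would either destroy nonnegativity of $\psi$ or produce a shock in the interior. Once this is settled, the integral equation \eqref{crit_smol_frozen_integral_eq} must agree with the characteristic solution by the earlier well-posedness theorem, and the above Taylor expansion plus the standard Tauberian argument complete the proof.
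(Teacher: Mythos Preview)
Your argument is correct and follows the same route as the paper: derive the Burgers-type equation for the generating function, integrate it along characteristics, pin down $\Phi$ by the boundary condition $\psi(t,0)=0$, Taylor-expand at $\zeta=0$ to extract the square-root behaviour, and finish with a Tauberian theorem. The only difference is packaging: the paper has already encoded the characteristic map as the inverse function $X(t,u)$ (Lemma~\ref{lemma_X_solution}) and introduced the Legendre-transform objects $G,F,E$ so that your envelope identity $t\,\psi_0'(\zeta_0^*(t))=1$ and your formula $\varphi_{crit}(t)=-1/(t^3\psi_0''(\zeta_0^*))$ become the one-line statements $w^*(t)=0$ and $\varphi_{crit}(t)=E(0,t)$ established in Lemma~\ref{crit_eq_existence} and \eqref{E_solution_translated}; the Taylor expansion you perform is then recorded as $X''(t,0)=1/E(0,t)$, and the same Feller Tauberian theorem (XIII.5, Theorem~4(c)) closes the argument. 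Your more hands-on derivation avoids introducing the $G/F/E$ layer, at the cost of rederiving facts the paper already has available; the paper's abstraction pays off because $E(0,\cdot)$ is reused throughout Sections~\ref{section_proof_of_sub}--\ref{section_proof_of_alt}. One small caution: the step ``differentiate and eliminate $d\zeta_0^*/dt$'' yields $\dot\zeta_0^*\,(1-t\psi_0'(\zeta_0^*))=0$, which by itself does not force the envelope branch; you correctly flag this at the end, and indeed the paper handles it by invoking the uniqueness Lemma~\ref{subcrit_integral_eq_uniqueness} together with the explicit construction in Lemma~\ref{crit_eq_existence}.
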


\begin{definition}
Let $x^*(t):= \inf \lbrace x: \sum_{k=1}^{\infty} v_k(t)e^{-kx} <
 +\infty \rbrace$
\end{definition}

The solutions of our equations have a remarkable rigidity property:

\begin{theorem}\label{exp_tilt_rigidity}
If $\conf{v}(t)$ is the solution of
 \eqref{subcrit_smol_frozen_integal_eq} or \eqref{alter_smol_frozen_eq} and
 $\tilde{\conf{v}}(t)$ is the solution of
\eqref{crit_smol_frozen_integral_eq} with the same initial
condition, then for all $t \geq \gt$ and $k \geq 1$ we have
\[\tilde{v}_k(t)=v_k(t)e^{-k x^*(t)}.\]
\end{theorem}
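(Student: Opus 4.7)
The proof rests on a scaling identity shared by all three of \eqref{subcrit_smol_frozen_integal_eq}, \eqref{crit_smol_frozen_integral_eq} and \eqref{alter_smol_frozen_eq}, each of which has the uniform form
\begin{equation*}
\dot v_k(t) \;=\; \frac{k}{2}\sum_{l=1}^{k-1} v_l(t)\,v_{k-l}(t) \;-\; k\,v_k(t)\,A(t),
\end{equation*}
with $A(t) = m_0(t)+\lambda(t)$ in the subcritical case, $A(t)=\tilde m_0(t)$ in the critical case, and $A(t)=m_0(\bt_{M(t)})$ in the alternating case. A direct differentiation shows that if $(v_k(t))_k$ solves this equation with rate $A(t)$, then for any absolutely continuous $\psi:\R_+\to\R$ the tilted sequence $(v_k(t)\,e^{-k\psi(t)})_k$ solves the same equation with the rate replaced by $A(t)+\dot\psi(t)$: the quadratic convolution transforms covariantly and the linear drift picks up an extra $k\dot\psi$. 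This is the engine of the proof.

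Let $\conf v(t)$ denote the subcritical or alternating solution in the statement, with its rate function $A(t)$, and set
\begin{equation*}
\psi(t)\;:=\;\int_0^t\big[\tilde m_0(s)-A(s)\big]\,ds,\qquad \hat v_k(t)\;:=\;\tilde v_k(t)\,e^{k\psi(t)}.
\end{equation*}
Then $\psi(0)=0$ and $\dot\psi=\tilde m_0-A$, so the tilt identity applied to the critical solution $\tilde{\conf v}$ shows that $\hat v$ satisfies the Smoluchowski equation with rate $\tilde m_0(t)-\dot\psi(t)=A(t)$, treated now as an externally prescribed function of $t$. Since $\conf v$ itself solves this same equation with the same external $A(t)$ (coming from $\conf v$'s own $m_0$ and burning sequence), and $\hat v(0)=\tilde v(0)=\conf v(0)$, uniqueness for the linear Smoluchowski equation with prescribed external rate---a simple induction on $k$, since each $v_k$ then satisfies a linear ODE driven by $v_1,\dots,v_{k-1}$---forces $\hat v\equiv\conf v$, and hence
\begin{equation*}
\tilde v_k(t)\;=\;v_k(t)\,e^{-k\psi(t)}\qquad\text{for all } t\geq 0 \text{ and } k\geq 1.
\end{equation*}

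To identify $\psi$ with $x^*$ on $[\gt,\infty)$, sum the displayed identity weighted by $e^{-kx}$ to obtain the shift relation $\tilde V(t,x)=V(t,x+\psi(t))$ between the Laplace transforms $V,\tilde V$ of $\conf v,\tilde{\conf v}$. Comparing the abscissae of convergence gives $x^*(t)=\tilde x^*(t)+\psi(t)$. By Theorem \ref{tauberian_smol}, for $t\geq\gt$ the critical tail $\sum_{k\geq K}\tilde v_k(t)\asymp K^{-1/2}$ rules out any exponential factor in $\tilde v_k(t)$, so $\tilde x^*(t)=0$ on $[\gt,\infty)$ and therefore $\psi(t)=x^*(t)$ for all $t\geq\gt$, as claimed.

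The one delicate point in the construction is that $\hat v(t)$ must lie in $\myN$, i.e., $\sum_k\hat v_k(t)=\tilde V(t,-\psi(t))<\infty$, which amounts to $\psi(t)<-\tilde x^*(t)$; this inequality is self-consistent with the identification $\psi=x^*$ derived above, but it has to be enforced a priori---for example by first running the argument on a short time interval (where the bound is automatic by continuity from $\psi(0)=0$) and then extending. In the alternating case $A(t)$ is only left-continuous, with jumps at the burning times $\bt_i$, but this affects only the regularity of $\psi$ and not the structure of the argument.
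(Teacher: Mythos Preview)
Your proof is correct and takes a genuinely different route from the paper's. The paper argues in one line via the Legendre-transform machinery of Section~\ref{section_def_transf}: starting from the subcritical or alternating solution $\conf{v}$, set $\tilde v_k(t):=v_k(t)e^{-kx^*(t)}$; then $\tilde V(t,x)=V(t,x+x^*(t))$, so $\tilde x^*(t)=0$ and hence $\tilde w^*(t)=0$, while the critical core is preserved, $\tilde E(t,w)=E(t,w)=E(0,t+w)$ (the derivatives $U',U''$ in \eqref{F_U_E_U_formulas} are merely shifted in $x$). Since the pair $(E(t,\cdot),w^*(t))$ determines $\conf v(t)$ at each fixed time via \eqref{E_and_w_star_determines_everything}, and the critical solution has the same critical core and $w^*(t)=0$ for $t\geq\gt$, the two coincide.

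Your argument replaces this structural identification with a direct tilt covariance of the ODE system plus componentwise Picard--Lindel\"of uniqueness with the rate $A(t)$ treated as an external coefficient. This is more elementary and entirely self-contained---it does not need the chain $V\to U\to X\to G\to F\to E$ at all---at the cost of hiding the conceptual explanation for rigidity, namely the invariance \eqref{E_solution_translated} of the critical core across regimes. One minor remark: your closing concern that $\hat v(t)$ lie in $\myN$ is unnecessary. The uniqueness you invoke is inductive on $k$ and uses nothing about $\sum_k \hat v_k$; once $\hat v=\conf v$ is established componentwise, $\hat v\in\myN$ follows for free.
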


The solution of \eqref{crit_smol_frozen_integral_eq} with
monodisperse initial condition is well-known (see e.g.
\cite{ZiffErnstHendriks}) and explicit:
\begin{claim}\label{selfsimilarity}
If  $\conf{v}(t)$ is the solution of
\eqref{crit_smol_frozen_integral_eq} with $v_k(0)=\ind_{\lbrace
k=1 \rbrace}\cdot m_0(0)$ then for $t \geq \gt
=\frac{1}{m_1(0)}=\frac{1}{m_0(0)}$ we have
\begin{equation}\label{borel}
v_k(t)=\frac{1}{t}
\frac{k^{k-1}}{k!}e^{-k}.
\end{equation}
\end{claim}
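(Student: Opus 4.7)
The plan is to verify the closed form by direct substitution, using the generating function encoding of \eqref{crit_smol_frozen_integral_eq} and the well-known Cayley tree function, then invoke the uniqueness statement from the earlier theorem.

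First, I would observe that on $[0,\gt]$ there is nothing to check beyond the classical result: since $m_0(t)\equiv m_0(0)$ for $t<\gt$, equation \eqref{crit_smol_frozen_integral_eq} reduces to \eqref{smoluchowki_ER}, whose solution with monodisperse initial data is $v_k(t)=m_0(0)\frac{(km_0(0)t)^{k-1}}{k!}e^{-km_0(0)t}$. Evaluating this at $t=\gt=1/m_0(0)$ gives exactly $v_k(\gt)=m_0(0)\frac{k^{k-1}}{k!}e^{-k}=\frac{1}{\gt}\frac{k^{k-1}}{k!}e^{-k}$, matching \eqref{borel} at the gelation time.

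The main step is to verify that the candidate $v_k(t)=\frac{1}{t}\frac{k^{k-1}}{k!}e^{-k}$ actually solves \eqref{crit_smol_frozen_integral_eq} on $[\gt,\infty)$. I would introduce the generating function $G(z,t):=\sum_{k\geq 1}v_k(t)z^k$, so that \eqref{crit_smol_frozen_integral_eq} becomes the scalar PDE
\begin{equation*}
\partial_t G(z,t)=z\,\partial_z G(z,t)\cdot\bigl(G(z,t)-G(1,t)\bigr),
\end{equation*}
obtained by multiplying \eqref{crit_smol_frozen_integral_eq} by $z^k$, summing, and recognizing the convolution $\sum_k kz^k\sum_{l=1}^{k-1}v_l v_{k-l}=2zG\,\partial_z G$. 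For the candidate we have $G(z,t)=\frac{1}{t}W(z)$ where $W(z):=\sum_{k\geq 1}\frac{k^{k-1}}{k!}e^{-k}z^k=T(z/e)$ and $T$ is the Cayley tree function defined by $T(w)=we^{T(w)}$. Known properties give $T(1/e)=1$, hence $W(1)=1$ and $m_0(t)=G(1,t)=1/t$, which in particular shows $\Phi(t)=m_0(0)-1/t\geq 0$ and confirms the critical regime $\theta(t)\equiv 0$ by \eqref{subcrit_Phi_def}. Differentiating $T(w)=we^{T(w)}$ yields $wT'(w)=T(w)/(1-T(w))$, hence
\begin{equation*}
zW'(z)\,\bigl(1-W(z)\bigr)=W(z).
\end{equation*}
Substituting the ansatz into the PDE, the left side equals $-\frac{1}{t^2}W(z)$ while the right side equals $\frac{1}{t^2}zW'(z)(W(z)-1)$, so the two sides match exactly by the identity just derived.

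Finally, I would conclude by uniqueness: the candidate satisfies \eqref{crit_smol_frozen_integral_eq} with the monodisperse initial data (agreement on $[0,\gt]$ is classical and on $[\gt,\infty)$ was verified via generating functions), and by the second bullet of the uniqueness theorem already proved in the excerpt, it must coincide with $\conf v(t)$. The only non-routine ingredient is the functional identity for the Cayley tree function, which is standard, so I expect no real obstacle; the only place one has to be careful is checking that the two expressions for $v_k(\gt)$ glue continuously at $t=\gt$, which is what ensures the initial condition at the gelation time is correct.
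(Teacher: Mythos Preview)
Your argument is correct, and it takes a genuinely different route from the paper's own proof. The paper works through the transformations of Section~\ref{section_def_transf}: from Remark~\ref{monodisperse_E_remark} the monodisperse critical core is $E(0,w)=1/w^2$, so \eqref{E_solution_translated} gives $E(t,w)=1/(t+w)^2=\frac{1}{t^2}E(1,w/t)$; together with $w^*(t)\equiv 0$ for $t\ge\gt$ this forces the self-similar scaling $v_k(t)=\frac{1}{t}v_k(1)$, and the explicit coefficients at $t=1$ are then read off from $X(1,u)=X(0,u)+u$ via the Lambert~$W$ function. Your approach instead writes down the Burgers-type PDE for the ordinary generating function and verifies the ansatz $\frac{1}{t}T(z/e)$ directly using the functional equation of the tree function, then invokes the uniqueness theorem. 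Your route is more self-contained and avoids the Legendre-transform machinery entirely; the paper's route, on the other hand, makes the self-similarity conceptually transparent (it is literally the scaling $E(t,w)=\frac{1}{t^2}E(1,w/t)$) and explains without computation why the answer for $t\ge\gt$ is independent of $m_0(0)$. One cosmetic point: you reuse the letter $G$ for your generating function, which clashes with the paper's $G$ of Definition~\ref{def_G_F_E}; renaming it (your $G(z,t)$ is just $V(t,-\log z)$ in the paper's notation) would avoid confusion.
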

That is, for all $\gt \leq t$ in the $N \to \infty$ limit, the component size of a uniformly chosen (unburnt) vertex
in the critical frozen percolation model has Borel distribution, which
is the same as that of a vertex in the \ER graph at $t=\gt$. The Borel distribution ( $\left( v_k(1) \right)_{k=1}^{\infty}$ in \eqref{borel})
 is the distribution of the size of a critical
Galton-Watson tree with $POI(1)$ offspring distribution (see \cite{aldous_smol_survey}).

The same self-similarity phenomenon can be observed in Aldous' frozen percolation model (see \cite{Aldousfrozen})
 on the binary tree: for $t \geq
\frac12$, which is the critical time of the
percolation process on the binary tree, a typical finite cluster has the distribution
of a critical percolation cluster.

 The solutions started from a polydisperse initial state are asymptotically self-similar:
\begin{theorem}\label{asymptotic_selfsimilar}
If  $\conf{v}(t)$ is the solution of the critical equation
\eqref{crit_smol_frozen_integral_eq} with $\conf{v}(0) \in \myNz$,
and $v_1(0)>0$ then
\begin{equation}\label{asymp_selfsimilar_formula}
\lim_{t \to \infty}\; t\cdot v_k(t) =
\frac{k^{k-1}}{k!}e^{-k}\quad \text{ and }\quad \lim_{t \to
\infty}\; t\cdot m_0(t) =1.
\end{equation}
\end{theorem}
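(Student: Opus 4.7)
The plan is to analyze the long-time behavior via the Laplace transform $V(t,x):=\sum_{k\geq 1} v_k(t)e^{-kx}$, $x\geq 0$. First I will multiply \eqref{crit_smol_frozen_integral_eq} by $e^{-kx}$ and sum over $k$ to obtain the quasilinear transport equation
\[
\partial_t V=(m_0(t)-V)\,\partial_x V,\qquad V(t,0)=m_0(t),
\]
which is regular for $x>0$ (its only singularity is $\partial_x V(t,0^+)=-\infty$ for $t>\gt$, confined to the boundary). The characteristics satisfy $\dot x = V - m_0(t)\leq 0$ and conserve the value of $V$; since the speed vanishes at $x=0$, every characteristic emanating from the initial line $\{(0,x_0):x_0\geq 0\}$ travels monotonically leftward and reaches $x=0$ at some time $t^*(x_0)$ at which $m_0(t^*(x_0))=V_0(x_0)$, where $V_0(x):=V(0,x)$.

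Setting $M(t):=\int_0^t m_0(s)\,ds$, I will use the characteristic relation $x(t)=x_0+V_0(x_0)t-M(t)$ evaluated at $t=t^*(x_0)$ together with implicit differentiation to derive the identity
\[
t^*(x_0)\;=\;-\frac{1}{V_0'(x_0)}\;=\;\Bigl(\sum_{k\geq 1} k\,v_k(0)\,e^{-kx_0}\Bigr)^{-1}.
\]
Note that $t^*(0)=1/m_1(0)=\gt$, consistent with Theorem \ref{tauberian_smol}. Since $v_1(0)>0$, the sharp tail estimates $V_0(x_0)\sim v_1(0)e^{-x_0}$ and $V_0'(x_0)\sim -v_1(0)e^{-x_0}$ as $x_0\to\infty$ give
\[
t^*(x_0)\,m_0(t^*(x_0))\;=\;-\frac{V_0(x_0)}{V_0'(x_0)}\;\longrightarrow\;1\qquad\text{as }x_0\to\infty,
\]
which yields $t\,m_0(t)\to 1$, the second limit in \eqref{asymp_selfsimilar_formula}. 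A parallel computation yields the auxiliary asymptotic $M(t)=\log t+1+\log v_1(0)+o(1)$.

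For the first limit I fix $x\geq 0$ and let $x_0=x_0(t,x)$ denote the initial position of the unique characteristic reaching $(t,x)$ for large $t$. Setting $\beta:=t\,V(t,x)=t\,V_0(x_0)$ gives $x_0=\log(v_1(0)t/\beta)+o(1)$; substituting into $x=x_0+V_0(x_0)t-M(t)$ and using the asymptotics for $M(t)$ collapses the equation to
\[
x\;=\;\beta-1-\log\beta+o(1),\qquad \beta\in(0,1],
\]
whose unique solution is $\beta(x)=T(e^{-1-x})$, with $T(s)=\sum_{k\geq 1}\frac{k^{k-1}}{k!}s^k$ the tree function satisfying $T=s\,e^T$. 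Hence for every $x\geq 0$,
\[
\lim_{t\to\infty}\sum_{k\geq 1}(t\,v_k(t))\,e^{-kx}\;=\;T(e^{-1-x})\;=\;\sum_{k\geq 1}\frac{k^{k-1}}{k!}e^{-k}\,e^{-kx}.
\]
Since $\sum_k t\,v_k(t)=t\,m_0(t)$ remains bounded, the family $(t\,v_k(t))_{k\geq 1}$ of nonnegative measures on $\N$ is tight, and convergence of their Laplace transforms forces atom-by-atom convergence, yielding $t\,v_k(t)\to \frac{k^{k-1}}{k!}e^{-k}$.

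The main obstacle will be making the characteristic analysis rigorous through and beyond the gelation time $\gt$, where $\partial_xV(t,0^+)$ blows up. I intend to work entirely on $\{x>0\}$ and to use the monotonicity of $V(t,\cdot)$ in $x$ together with the monotonicity of $m_0$ in $t$ to show that the characteristic map is a bijection of $\{x_0:t^*(x_0)\geq t\}$ onto $[0,\infty)$, so that every $(t,x)$ with $t>\gt$ is reached by exactly one characteristic from the initial line; the boundary value $t\,m_0(t)\to 1$ will then follow by continuity of the limiting profile in $x$ combined with $V(t,0)=m_0(t)$.
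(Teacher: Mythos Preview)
Your proposal is correct and arrives at the same limit via essentially the same mechanism as the paper, but you package it differently. The paper works through the chain of transforms $V\to U\to X\to F$ set up in Section~\ref{section_def_transf}: it defines $H(w)=F(0,w)-m_0(0)$, uses the relation $F(0,-1/V_0'(x))=m_0(0)-V_0(x)$ (which is exactly your identity $t^*(x_0)=-1/V_0'(x_0)$ combined with $m_0(t^*(x_0))=V_0(x_0)$), shows $tH(tw)\to -1/w$ from $-V_0'/V_0\to 1$, and then passes back through $\hat F,\hat X',\hat X,\hat U,\hat V$ using monotonicity. You instead stay with the characteristics of $\partial_t V=(m_0-V)\partial_x V$ and analyze the map $x_0\mapsto(t,x)$ directly; your implicit-differentiation step recovers the same key identity without introducing $F$ or the Legendre transform. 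What the paper's packaging buys is that the characteristic formula is already rigorously available as Lemma~\ref{lemma_X_solution} (so no separate argument near $x=0^+$ is needed), and the inversion back to $\hat V$ is a one-line appeal to monotonicity of $\hat X',\hat X,\hat U$. What your route buys is a self-contained argument that avoids the transform machinery entirely; the bijection of the characteristic map you describe is equivalent to (and can be read off from) the invertibility of $U(t,\cdot)$ on $(0,\infty)$, so the ``main obstacle'' you anticipate is already handled by the paper's framework and would not require new ideas.
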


 Theorems \ref{tauberian_smol}., \ref{exp_tilt_rigidity}. and
\ref{asymptotic_selfsimilar}. are proved in Section
\ref{section_properties_of_frozen_eqs} using the method of Laplace transforms, which is classical for the Smoluchowski equation
with multiplicative kernel. The results \eqref{asymp_selfsimilar_formula} and $\frac{v_k(t)}{k}=c_k(t) \asymp k^{-5/2}$
(which is a variant of \eqref{powerlaw_decay}) are already present in \cite{ZiffErnstHendriks}, but we believe that our approach
based on the notion of the \emph{critical core} of $\conf{v}(t)$  (defined in Section \ref{section_def_transf})
 gives new insight into these results about the solution of \eqref{crit_smol_frozen_integral_eq}.

In the frozen percolation model on the binary tree, components are frozen (i.e. removed
from the system) when their size becomes infinite. The question may arise:

$ $

\emph{What is the typical size of a frozen component
 in the mean field process of Definition \ref{def_process}?}

$ $

  In order to precisely formulate this question recall \eqref{def_frozen_operator} and
 let
 \[ \Phi^N([t_1,t_2],k):=\frac{k}{n} \cdot \abs{ \left\{ t \in [t_1,t_2]\;:\; \conf{\cV}(t_+)= \conf{\cV}_k^-(t_-)   \right\} }. \]
  Thus $\Phi^N([t_1,t_2],k)$ is the mass of burnt components of size $k$ from $t_1$ to $t_2$. We have
  \[\sum_{k \geq 1} \Phi^N([t_1,t_2],k)= \Phi^N(t_2)-\Phi^N(t_1)=:\Phi([t_1,t_2])\]
  Thus  $p^N_k[t_1,t_2] :=  \frac{\Phi^N([t_1,t_2],k)}{\Phi^N([t_1,t_2])}$, $k=1,2,\dots$ is a random probability distribution for all $N$ and $t_1<t_2$.

  Denote by $\abs{\cC_{max}^N(t)}$  the size of the largest component at time $t$.
\begin{conjecture}\label{torottvonal_conjecture}
If $\mu(N)=N^{-\alpha}$ in a critical sequence of frozen percolation processes (see Definitions \ref{def_process} and \ref{def_sub_alt_crit}), where
$0<\alpha<1$, and if we define
\begin{equation}\label{torott}
 \beta(\alpha):= \begin{cases} 2\alpha & \text{ if } \alpha \leq \frac13 \\
\frac{\alpha+1}{2} & \text{ if } \alpha \geq \frac13 \end{cases}
\end{equation}
then for every $\gt < t$ we have
\begin{align}
\label{m1_and_alpha} \lim_{N \to \infty} \frac{ \log \left( \expect{ m_1^N(t)} \right)}{\log(N)}\; &= \; \alpha \\
\label{m2_m1_beta} \lim_{N \to \infty} \frac{ \log \left( \expect{ m_2^N(t)} \right) - \log \left( \expect{ m_1^N(t)} \right)}{\log(N)}\; &= \;\beta(\alpha)\\
\lim_{N \to \infty} \frac{ \log \left( \expect{ \abs{\cC_{max}^N(t)}} \right)}{\log(N)}\; &= \;\beta(\alpha)
\end{align}
Moreover for every $\conf{v}(0)$, $\gt <t_1 <t_2$ and $\alpha$
there exists a non-defective probability distribution function $F: (0, \infty) \to (0,1)$, $\lim_{x \to 0_+} F(x)=0$, $\lim_{x \to \infty} F(x)=1$  such that for all $x \in \R_+$ we have
\begin{equation}\label{sejtes}
 \lim_{N \to \infty} \sum_{k \geq 1} \ind [\; k \leq  x  N^{\beta(\alpha)}  \; ] \cdot p^N_k[t_1,t_2]\;  = \; F(x)
 \end{equation}
\end{conjecture}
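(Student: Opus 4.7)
The plan is to combine generator identities for the moments $m_1^N, m_2^N$ with the asymptotic information provided by Theorems \ref{thm_convergence_of_process_to_eqn} and \ref{tauberian_smol}, and to couple the frozen process to a near-critical Erd\H{o}s--R\'enyi process between consecutive burn events. A useful structural observation is that a mass-weighted burnt vertex lies in a component of expected size $m_2^N/m_1^N$, so \eqref{m2_m1_beta} and \eqref{sejtes} address essentially the same random quantity.

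The first-moment scaling \eqref{m1_and_alpha} follows from the exact identity
\[ \tfrac{d}{dt}\expect{\Phi^N(t)}\;=\;\lambda(t)\mu(N)\expect{m_1^N(t)}, \]
an immediate consequence of the generator since mass is erased by lightning at rate $\lambda(t)\mu(N)$ per vertex. By Theorem \ref{thm_convergence_of_process_to_eqn}, $\Phi^N\to\Phi$ in distribution, and by Theorem \ref{tauberian_smol}, $\dot\Phi(t)=\varphi_{crit}(t)>0$ for $t>\gt$; upgrading weak convergence to convergence of means (via the uniform bound $\Phi^N\le m_0(0)$) gives $\mu(N)\expect{m_1^N(t)}\to\varphi_{crit}(t)/\lambda(t)$, which proves \eqref{m1_and_alpha}.

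For \eqref{m2_m1_beta} and the largest-component scaling, a similar generator computation yields $\tfrac{d}{dt}\expect{m_1^N}=\expect{(m_1^N)^2}-\lambda(t)\mu(N)\expect{m_2^N}+O(1/N)$, so quasi-stationarity implies $\expect{(m_1^N)^2}\approx\lambda(t)\mu(N)\expect{m_2^N}$. Under the hypothesis that $m_1^N$ concentrates, this yields $\expect{m_2^N}/\expect{m_1^N}\sim N^{2\alpha}$; the concentration hypothesis is self-consistent only when $\abs{\cC_{max}^N}^2/N\ll\expect{m_1^N}=N^\alpha$, i.e.\ $\abs{\cC_{max}^N}\ll N^{(\alpha+1)/2}$, and the threshold is crossed exactly at $2\alpha=(\alpha+1)/2$, i.e.\ $\alpha=1/3$ --- the origin of the kink in \eqref{torott}. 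For $\alpha\le 1/3$ I would transfer the classical subcritical Erd\H{o}s--R\'enyi asymptotics at effective distance $\delta\sim N^{-\alpha}$ from criticality, giving $\abs{\cC_{max}^N}\sim \delta^{-2}\log N\sim N^{2\alpha}$ and, via a branching-process local limit, the typical burnt-cluster distribution $F$. For $\alpha>1/3$ the giant escapes into the supercritical regime between burns, and $\dot G\approx G\cdot m_1^{\text{small}}$ with $m_1^{\text{small}}\sim N^{(1-\alpha)/2}$ yields exponential growth; the balance of cumulative lightning hazard $\int_0^T\lambda(t)\mu(N)G(s)\,ds=O(1)$ identifies the typical burn size $G^*\sim m_1^{\text{small}}/(\lambda(t)\mu(N))=N^{(\alpha+1)/2}$. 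The giant then dominates both $\expect{m_1^N}$ (contributing $(G^*)^2/N\sim N^\alpha$) and $\expect{m_2^N}$ (contributing $(G^*)^3/N\sim N^{(3\alpha+1)/2}$), giving $\expect{m_2^N}/\expect{m_1^N}\sim N^{(\alpha+1)/2}$.

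The main obstacle is rigorous identification of the limit distribution $F$ in \eqref{sejtes}, especially for $\alpha>1/3$. This requires controlling the stochastic trajectory of the mesoscopic giant through the near-critical Erd\H{o}s--R\'enyi window, most naturally in the language of Aldous' multiplicative coalescent and the associated Brownian-excursion description of near-critical cluster sizes, coupled to a lightning Poisson clock whose effective rate scales as $N^{(1-\alpha)/2}$. The universality of $F$ (dependence on $\alpha$ alone, and not on $\conf{v}(0),\,\lambda,\,t_1,t_2$) is the expected consequence of the self-organized criticality exhibited by the limit (Theorem \ref{asymptotic_selfsimilar}); making it precise would require quantitative stability estimates for \eqref{crit_smol_frozen_integral_eq} strong enough to propagate to the stochastic level and erase initial-condition memory within a single burn cycle.
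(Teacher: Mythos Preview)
The statement you are attempting to prove is labeled \emph{Conjecture} in the paper, and the paper offers no proof: it says only that the conjecture is ``supported by heuristic arguments, computer simulations and Theorems~\ref{subcrit_limit_thm_statement} and~\ref{alter_limit_thm}.'' So there is nothing on the paper's side to compare against except those two limit theorems, which are proved at the level of the limiting ODEs (\eqref{subcrit_smol_frozen_integal_eq} and \eqref{alter_smol_frozen_eq}) with $\lambda\to 0$ and $\lambda\to\infty$ respectively, and which yield the $\Gamma(\tfrac12,1)$ and size-biased Rayleigh laws as the candidates for $F$ in the two regimes. Your proposal, by contrast, works directly with the finite-$N$ Markov process via generator identities for $m_1^N$ and $m_2^N$ and a proposed coupling to near-critical Erd\H{o}s--R\'enyi; this is a genuinely different route, and the generator calculation you give for the kink at $\alpha=\tfrac13$ is a nice way to see why \eqref{torott} has that shape without passing through the ODE limit.

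That said, your proposal is a heuristic outline, not a proof, and you are candid about this in the final paragraph. Several steps that you treat as routine are in fact open. Your derivation of \eqref{m1_and_alpha} needs $\mu(N)\expect{m_1^N(t)}\to\varphi_{crit}(t)/\lambda(t)$ \emph{pointwise} in $t$, but Theorem~\ref{thm_convergence_of_process_to_eqn} together with the bound $\Phi^N\le m_0(0)$ only gives you the integrated version $\int_{t_1}^{t_2}\lambda(s)\mu(N)\expect{m_1^N(s)}\,ds\to\Phi(t_2)-\Phi(t_1)$; extracting the pointwise limit requires a regularity estimate on $t\mapsto\expect{m_1^N(t)}$ that is not supplied. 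The quasi-stationarity step $\expect{(m_1^N)^2}\approx\lambda(t)\mu(N)\expect{m_2^N}$ assumes that $\tfrac{d}{dt}\expect{m_1^N}$ is negligible compared to either term on the right, which is exactly the kind of uniform-in-$N$ control the paper does not establish; likewise the concentration hypothesis $\expect{(m_1^N)^2}\approx(\expect{m_1^N})^2$ is asserted rather than proved (and for $\alpha>\tfrac13$ you yourself argue it should \emph{fail}, since the giant dominates). The coupling to near-critical Erd\H{o}s--R\'enyi and the multiplicative-coalescent machinery you invoke for \eqref{sejtes} are plausible ingredients but constitute a substantial project in themselves. In short: your heuristics are sound and in places sharper than the paper's own discussion, but the conjecture remains a conjecture.
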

In plain words we might say that after gelation the typical component size of a frozen vertex and the size of the largest component
 is of order $N^{\beta(\alpha)}$. This conjecture is supported by heuristic arguments, computer simulations and Theorems \ref{subcrit_limit_thm_statement} and \ref{alter_limit_thm} below. For $0<\alpha<\frac13$ the model is conjectured to behave similarly to the subcritical case described in Theorem  \ref{subcrit_limit_thm_statement}, whereas for $\frac13 < \alpha<1$ it is conjectured to behave similarly to the alternating case described in
 Theorem \ref{alter_limit_thm}. Note that $\beta(\frac13)=\frac23$ and $N^\frac23$  is the order of the size of the largest component in the critical \ER random graph.

\begin{theorem}\label{subcrit_limit_thm_statement}
If $\conf{v}^{\lambda}(t)$ is the solution of
\eqref{subcrit_smol_frozen_integal_eq} with rate function
$\lambda(t) \equiv \lambda$ and $\conf{v}^{\lambda}(0)=
\conf{v}(0) \in \myNz$ then
  there is a constant $C$ that depends only on
the initial data and $T$ such that for all $0<\lambda \leq 1$ and
 $\frac{1}{m_1(0)}<t \leq T$
 we have
\be\label{subcrit_control_almost_crit}
\abs{\varphi_{\lambda}(t)-\varphi_{crit}(t)} \leq C\lambda \ee
where
\begin{equation} \label{subcrit_phi_formula}
\frac{d}{dt}\Phi_{\lambda}(t)=\varphi_{\lambda}(t)=\lambda
m_1^{\lambda}(t).
\end{equation}

Moreover if we define the random variable $Y_{\lambda}(t)$ to have
distribution \[\prob{Y_{\lambda}(t)=k}= \frac{\lambda \cdot k \cdot
v_k^{\lambda}(t)}{\varphi_{\lambda}(t)}= \frac{k\cdot
v_k^{\lambda}(t)}{m_1^{\lambda}(t)}\] then \be
\label{subcrit_limit_thm_distribution_formulation} \lim_{\lambda
\to 0} \prob{\frac{\lambda^2}{2 \varphi_{crit}(t)}
Y_{\lambda}(t)<x}= \int_0^x \frac{1}{\sqrt{\pi}}
\frac{1}{\sqrt{y}} e^{-y}dy \ee
\end{theorem}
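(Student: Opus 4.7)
The plan is to exploit Theorem \ref{exp_tilt_rigidity}: for $t \geq \gt$ we may write $v_k^{\lambda}(t) = \tilde v_k(t)\, e^{k x^*_\lambda(t)}$ for all $k \geq 1$, where $\tilde{\conf v}(t)$ solves \eqref{crit_smol_frozen_integral_eq} with the same initial data and $x^*_\lambda(t) \leq 0$. In this way the full $\lambda$-dependence of $\conf v^\lambda(t)$ is captured by a single scalar $x^*_\lambda(t)$, and all subcritical quantities of interest become tilted sums against the already-understood critical solution.

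First I would pin down the Tauberian asymptotic for $m_1^\lambda$. By Theorem \ref{tauberian_smol} we have $\sum_{k \geq K} \tilde v_k(t) \sim \sqrt{2\varphi_{crit}(t)/\pi}\, K^{-1/2}$, and a standard Karamata-type Tauberian theorem for regularly varying tails yields
\[ m_1^\lambda(t) \;=\; \sum_{k \geq 1} k\, \tilde v_k(t)\, e^{-k |x^*_\lambda(t)|} \;\sim\; \sqrt{\frac{\varphi_{crit}(t)}{2\,|x^*_\lambda(t)|}} \quad \text{as } |x^*_\lambda(t)| \to 0, \]
uniformly on compact subsets of $(\gt, T]$. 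Combining this with the identity $\varphi_\lambda(t) = \lambda m_1^\lambda(t)$ and the qualitative convergence $\varphi_\lambda \to \varphi_{crit}$ supplied by Theorem \ref{subcrit_converges_to_crit} (which forces $|x^*_\lambda(t)| \to 0$), the self-consistency $\lambda \sqrt{\varphi_{crit}(t)/(2|x^*_\lambda(t)|)} \sim \varphi_{crit}(t)$ pins down
\[ |x^*_\lambda(t)| \;\sim\; \frac{\lambda^2}{2\,\varphi_{crit}(t)}, \]
and a quantitative version of the Tauberian step (with explicit next-order error) then upgrades this to the $O(\lambda)$ bound \eqref{subcrit_control_almost_crit}.

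For the distributional statement I would compute the Laplace transform of $Y_\lambda(t)$ directly. Using the tilt formula,
\[ \E\bigl(e^{-s Y_\lambda(t)}\bigr) \;=\; \frac{1}{m_1^\lambda(t)} \sum_{k \geq 1} k\, \tilde v_k(t)\, e^{-k(s + |x^*_\lambda(t)|)} \;\sim\; \sqrt{\frac{|x^*_\lambda(t)|}{s + |x^*_\lambda(t)|}}, \]
by applying the same Tauberian asymptotic with the parameter $s + |x^*_\lambda(t)|$ in place of $|x^*_\lambda(t)|$. Substituting $s = u\, \lambda^2/(2 \varphi_{crit}(t))$ and using $|x^*_\lambda(t)| \sim \lambda^2/(2\varphi_{crit}(t))$, one obtains $\E\bigl(\exp(-u \tfrac{\lambda^2}{2\varphi_{crit}(t)} Y_\lambda(t))\bigr) \to (1+u)^{-1/2}$ as $\lambda \to 0$. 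Since $(1+u)^{-1/2}$ is the Laplace transform of the density $\tfrac{1}{\sqrt{\pi y}}\, e^{-y}$ on $(0,\infty)$, the continuity theorem for Laplace transforms yields \eqref{subcrit_limit_thm_distribution_formulation}.

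The main obstacle is the quantitative $O(\lambda)$ rate in part 1, since the heuristic argument above only delivers $|\varphi_\lambda - \varphi_{crit}| = o(1)$. A robust way to extract the rate is to work directly with the Laplace transform $L^\lambda(t,s) = \sum_k v_k^\lambda(t) e^{-sk}$, which by a direct computation from \eqref{subcrit_smol_frozen_integal_eq} satisfies a first-order PDE whose characteristics involve $m_0^\lambda(t) + \lambda - L^\lambda$; combining the method of characteristics with the critical core framework of Section \ref{section_def_transf} and with the tail asymptotics of Theorem \ref{tauberian_smol} should yield explicit control of the next-to-leading correction to $|x^*_\lambda(t)|$, and hence of $\varphi_\lambda(t) - \varphi_{crit}(t)$, uniformly on compact subsets of $(\gt, T]$.
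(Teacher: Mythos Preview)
Your route to \eqref{subcrit_limit_thm_distribution_formulation} via Theorem \ref{exp_tilt_rigidity} is sound \emph{once} the scaling $|x^*_\lambda(t)|\sim\lambda^2/(2\varphi_{crit}(t))$ is established, and it is close in spirit to what the paper does: the paper works with the equivalent parameter $w^*(t)=1/m_1^\lambda(t)$, Taylor-expands $X_\lambda(t,u)$ about $u=0$ using $X'_\lambda(t,0)=-w^*(t)$ and $X''_\lambda(t,0)=1/E$, and inverts to obtain $U'_\lambda(t,x)=-(w^*(t)^2+2x/E)^{-1/2}+O(1)$ (Lemma \ref{lemma_subcrit_limi_thm}); forming the ratio at $x=\lambda^2 u/(2E)$ gives $(1+u)^{-1/2}$. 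Your tilt formulation and the paper's $w^*$-formulation are linked by $|x^*_\lambda(t)|=G_\lambda(t,0)=\int_0^{w^*(t)} y\,E(0,t+y)\,dy\sim\tfrac12 E(0,t)\,w^*(t)^2$, so the two computations are really the same one in different coordinates.

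The gap is earlier than you locate it: your derivation of $|x^*_\lambda|\sim\lambda^2/(2\varphi_{crit})$ is circular. You invoke ``$\varphi_\lambda\to\varphi_{crit}$ supplied by Theorem \ref{subcrit_converges_to_crit}'', but that theorem only gives $\Phi_\lambda\to\Phi_{crit}$ uniformly, not convergence of the derivative; and your Tauberian relation $m_1^\lambda\sim\sqrt{E/(2|x^*_\lambda|)}$ combined with the structural identity $|x^*_\lambda|\sim\tfrac12 E\,(w^*)^2=E/(2(m_1^\lambda)^2)$ is a tautology that does not determine either quantity as a function of $\lambda$. The paper supplies the missing input not through any Tauberian refinement but by analysing the scalar ODE \eqref{subcritical_control_ODE} for $w^*(t)$ directly: one first bounds $w^*(t)\leq 2\lambda/\phiinf$ after a transient of length $O(\lambda\log(1/\lambda))$ (via comparison with $\dot y=\lambda/(y\phiinf)-1$, Lemma \ref{lemma_Lambert_W}), then sets $z(t)=w^*(t)E(0,t+w^*(t))/\lambda-1$ and observes that $z$ satisfies a linear ODE with damping rate $1/w^*(t)\gtrsim 1/\lambda$ and bounded forcing, giving $z(t)=O(\lambda)$ and hence $|w^*(t)-\lambda/E(0,t)|=O(\lambda^2)$ (Lemma \ref{subcrit_w_star_almost_phi}). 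This single estimate yields both \eqref{subcrit_control_almost_crit} and the scaling of $|x^*_\lambda|$ that your Laplace-transform argument needs.
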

In plain words: for any $t> \gt$  the distribution of the size-biased sample from the component-size distribution
$\conf{v}^{\lambda}(t)$ rescaled by $\lambda^{-2}$ converges in distribution to a $\Gamma(\frac12, 1)$ distribution as $\lambda \to 0$.
We prove this theorem in Section \ref{section_proof_of_alt}.

The relevance of Theorem \ref{subcrit_limit_thm_statement} to Conjecture \ref{torottvonal_conjecture} is the following:
 if we consider a sequence of subcritical frozen percolation models (see Definition \ref{def_sub_alt_crit}) with $\lambda(t)\equiv \lambda$ then by
Theorem \ref{thm_convergence_of_process_to_eqn} we get
\begin{multline*} \lim_{dt \to 0} \lim_{N \to \infty} p^N_k[t,t+dt] = \lim_{dt \to 0} \frac{\Phi_{\lambda}([t,t+dt],k)}{\Phi_{\lambda}([t,t+dt])}=\\
\lim_{dt \to 0} \frac{\int_t^{t+dt} \lambda \cdot k \cdot v^{\lambda}_k(s) \,ds }{ \int_t^{t+dt} \sum_{l=1}^{\infty} \lambda \cdot l \cdot v^{\lambda}_l(s)}=\frac{k \cdot v_k^{\lambda}(t) }{m_1^{\lambda}(t)}=\prob{Y_{\lambda}(t)=k}
\end{multline*}
If we let $\lambda \to 0$ then by \eqref{subcrit_control_almost_crit} and \eqref{subcrit_phi_formula} we get $m_1^{\lambda}(t) \asymp \lambda^{-1}$ which is a "subcritical" version of
\eqref{m1_and_alpha},  $\frac{m_2^\lambda(t)}{m_1^\lambda (t)} = \expect{Y_{\lambda}(t)}   \asymp \lambda^{-2}$ corresponds to $\beta(\alpha)=2 \alpha$ in \eqref{m2_m1_beta}, and \eqref{subcrit_limit_thm_distribution_formulation} is a  version of \eqref{sejtes}.

\begin{theorem}\label{alter_limit_thm}
Let $\conf{v}^{\lambda}(t)$ denote the solution of the random
alternating equations (see Definition
\ref{def_random_alternating_eqs}.) with a constant rate function
$\lambda(t)\equiv \lambda$.

Let $\epslam$ be a function satisfying $\lambda^{-\frac12} \ll
\epslam \ll 1 $ as $ \lambda \to \infty$.

 Recalling
\eqref{def_of_M_counting} and \eqref{alt_theta_def} let
\[\Phi_{\lambda}(t,x):=\sum_{j=1}^{M(t)} \theta^{\lambda}(\bt_j) \ind
\lbrack \theta^{\lambda}(\bt_j) > x \rbrack \] be the random mass
of frozen giants of size at least $x$. Then \be
\label{alter_limit_thm_statement_formula}\lim_{\lambda \to \infty}
\frac{\Phi_{\lambda}\left(t+\epslam,2\sqrt{\frac{\varphi_{crit}(t)}{\lambda}}
x
 \right)-
\Phi_{\lambda} \left( t,2\sqrt{\frac{\varphi_{crit}(t)}{\lambda}}
x \right)}{\epslam \varphi_{crit}(t)}= \int_x^{\infty}
\frac{4}{\sqrt{\pi}} y^2 e^{-y^2}dy \ee in probability.
\end{theorem}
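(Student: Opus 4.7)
The plan is to reduce the dynamics of the random alternating process within each inter-lightning epoch to a one-dimensional problem using the rigidity of Theorem~\ref{exp_tilt_rigidity}, compute the limiting distribution of a single giant $\theta_j$ via the Tauberian asymptotics of Theorem~\ref{tauberian_smol}, and then apply a law-of-large-numbers argument over $[t,t+\epslam]$. The condition $\lambda^{-1/2} \ll \epslam \ll 1$ is exactly what forces the number $N_\lambda$ of lightnings in the window to tend to infinity, while keeping $\varphi_{crit}$ and $m_0^{crit}$ essentially frozen across the window.

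By Theorem~\ref{exp_tilt_rigidity} one may write $v_k^\lambda(t) = \tilde v_k(t)\,e^{-k y_\lambda(t)}$ with $y_\lambda(t) := -x^*(t) \geq 0$. Differentiating this identity and comparing \eqref{alter_smol_frozen_eq} with \eqref{crit_smol_frozen_integral_eq} yields the clean identity
\[
\dot y_\lambda(t) \;=\; m_0(\bt_{M(t)}) - m_0^{crit}(t)
\]
between lightnings; $y_\lambda$ is continuous at lightning times because burning removes only the gel, not the sol vector $\conf{v}^\lambda(t)$. Inside an epoch $(\bt_j,\bt_{j+1})$ the value $\bar m_0 := m_0^\lambda(\bt_j^+)$ is constant, and there is a unique re-gelation time $\tau_j^\star \in (\bt_j,\bt_{j+1})$ where $m_1^\lambda$ diverges and $y_\lambda(\tau_j^\star)=0$ (since $m_1^\lambda \asymp y_\lambda^{-1/2}$), giving $\bar m_0 = m_0^{crit}(\tau_j^\star)$. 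Abel summation on the tail asymptotics of Theorem~\ref{tauberian_smol} yields
\[
G^{crit}(t,y) := \sum_k \tilde v_k(t)\,e^{-ky} \;=\; m_0^{crit}(t) - \sqrt{2\varphi_{crit}(t)\,y} + o(\sqrt y)\qquad (y\to 0^+),
\]
and substituting the solution $y_\lambda(t)\approx\tfrac12\varphi_{crit}(t)(t-\tau_j^\star)^2$ of the ODE above into $\theta^\lambda(t)=\bar m_0 - G^{crit}(t,y_\lambda(t))$ leads to
\[
\theta^\lambda(t) \;=\; \int_{\tau_j^\star}^t \varphi_{crit}(s)\,ds \;+\; \sqrt{2\varphi_{crit}(t)\,y_\lambda(t)} \;\approx\; 2\varphi_{crit}(t)(t-\tau_j^\star).
\]
The two summands contribute equally; the factor $2$ is the decisive numerical input.

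The memoryless lightning mechanism of Definition~\ref{def_random_alternating_eqs} now gives survival probability $\exp\!\big(-\lambda\int_0^u 2\varphi_{crit}(t)\,v\,dv\big) = \exp(-\lambda\varphi_{crit}(t)\,u^2)$, so the rescaled giant $\theta_{j+1}/\bigl(2\sqrt{\varphi_{crit}(t)/\lambda}\bigr)$ converges in distribution to the Rayleigh law with density $2y\,e^{-y^2}$. A consistency check: the subcritical sub-phase of the next epoch has length $\Delta^{sub}_{j+1} = 1/m_1^\lambda(\bt_{j+1}^+) \approx \sqrt{2y_\lambda(\bt_{j+1}^+)/\varphi_{crit}(t)} = U_j$ to leading order (where $U_j=\bt_{j+1}-\tau_j^\star$), so the mean inter-lightning time is $2\,\mathbb E[U]$ and the mean mass burn rate $\mathbb E[\theta_j]/(2\mathbb E[U])=\varphi_{crit}(t)$ matches what convergence to the critical solution (Theorem~\ref{alternating_converges_to_crit}) requires. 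Treating the rescaled giants as asymptotically i.i.d.\ across the window (legitimate because $\varphi_{crit}$ is frozen there and the chain mixes on the fast scale), a standard LLN yields
\[
\frac{\sum_{j:\,\bt_j\in(t,t+\epslam]}\theta_j\,\ind[\theta_j > 2\sqrt{\varphi_{crit}(t)/\lambda}\,x]}{\epslam\,\varphi_{crit}(t)} \;\toinp\; \frac{\mathbb E[Y\,\ind[Y>x]]}{\mathbb E[Y]} \;=\; \int_x^\infty \frac{4}{\sqrt\pi}\,y^2 e^{-y^2}\,dy,
\]
using $\mathbb E[Y]=\sqrt\pi/2$ for $Y$ with density $2y\,e^{-y^2}$, and the in-probability limit $\epslam\varphi_{crit}(t)$ of the total burnt mass in the window.

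The hardest technical step will be to make the single-epoch approximation $\theta^\lambda(s)\approx 2\varphi_{crit}(t)(s-\tau_j^\star)$ quantitatively uniform over all $\sim\sqrt\lambda\,\epslam$ epochs in the window: the $o(\sqrt y)$ remainder in the Tauberian expansion must be controlled for $y_\lambda$ ranging up to $\asymp 1/\lambda$, and one must propagate the fluctuations of $\bar m_0$ and $\tau_j^\star$ through the chain $\bar m_0^{(j)} = \bar m_0^{(j-1)} - \theta_j$. Once a uniform one-epoch estimate is in hand, the LLN step is a routine renewal-process argument.
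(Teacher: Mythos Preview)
Your heuristic is correct and essentially reproduces the paper's own ``sketch proof'': the single-epoch asymptotic $\theta^\lambda(s)\approx 2\varphi_{crit}(t)(s-\tau_j^\star)$, the emergence of the Rayleigh law for the rescaled giant, and the size-biasing that yields $\int_x^\infty \frac{4}{\sqrt\pi}y^2e^{-y^2}\,dy$ are all exactly what the paper uses. Your parametrization via $y_\lambda=-x^*$ and its ODE $\dot y_\lambda=m_0(\bt_{M(t)})-m_0^{crit}(t)$ is a different coordinate from the paper's $w^*(t)=\mathrm{argmin}_w G(t,w)$, but the two are equivalent: from $x^*=-G(t,0)$ and \eqref{E_and_w_star_determines_everything} one has $y_\lambda\approx\tfrac12 E(0,t)\,w^*(t)^2$, and your $s-\tau_j^\star$ is the paper's $-w^*(s)$, so your $\theta\approx 2\varphi_{crit}(t)(s-\tau_j^\star)$ is precisely the content of \eqref{upper_lower_bound_w_star_theta}.

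Where you leave a gap is exactly where the paper does its real work. The sentence ``treating the rescaled giants as asymptotically i.i.d.\ \dots (legitimate because $\varphi_{crit}$ is frozen there and the chain mixes on the fast scale)'' is not justified by mixing at all in the paper; instead the paper \emph{sandwiches} the dependent sequence $w^*_-(t,i)$ between two genuinely i.i.d.\ Rayleigh sequences $w^l_-(i)\sim R\bigl((2\phisup(\lambda)\lambda)^{-1/2}\bigr)$ and $w^u_-(i)\sim R\bigl((2\phiinf(\lambda)\lambda)^{-1/2}\bigr)$ via an explicit coupling built from a common Poisson process, valid on an event of probability $\to 1$. This coupling rests on the two-sided deterministic bounds of Lemma~\ref{lemma_alter_bounds} (your ``uniform one-epoch estimate'' is \eqref{upper_lower_bound_w_star_theta} and \eqref{alternating_flip_explicit_upper_lower_bound}), and the boundary epochs are handled by the a~priori bound $\expect{\gt_{N(t)+1}\wedge T-t}=\Ordo(\lambda^{-1/2})$ of Lemma~\ref{bound_on_the distance of_critical times}. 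Once the sandwich is in place, the LLN is applied to the i.i.d.\ bounds, not to the dependent $\theta(t,i)$ themselves. Your outline would become a proof if you replaced the appeal to mixing by this sandwiching device; in your $y_\lambda$-coordinates the analogue would be to bound $E(0,s)$ uniformly on the window by $\phiinf(t,t+2\epslam)$ and $\phisup(t,t+2\epslam)$, which converge to $E(0,t)$ by \eqref{lipschitz_phisup_phiinf}, and then construct the same coupling.
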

We prove this theorem in Section \ref{section_proof_of_alt}.

The heuristic meaning of this theorem is the following:
 if we pick a vertex uniformly from all vertices that were frozen between $t$
and $t+\epslam$ and denote the mass of the giant component of that
vertex by $Z_{\lambda}(t)$, then the distribution of $\frac12
\sqrt{\frac{\lambda}{\varphi_{crit}(t)}} Z_{\lambda}(t)$ converges
to a size-biased Rayleigh distribution (see Definition
\ref{Rayleigh}) as $\lambda \to \infty$.
Thus the typical mass of a frozen giant is of
order $\lambda^{-\frac12}$, which suggests that if $\mu(N)=\frac{N^{\varepsilon}}{N}$ (that is $\alpha=1-\varepsilon$ in Conjecture
\ref{torottvonal_conjecture})
then the typical size of a frozen component is of order $\left(N^{\varepsilon}\right)^{-\frac12} \cdot N= N^{1-\frac12 \varepsilon}$, that is
$\beta(\alpha)=\frac{\alpha+1}{2}$. \eqref{alter_limit_thm_statement_formula} is the "alternating" version of \eqref{sejtes}.

$ $

 The critical frozen percolation model has an extremum property
compared to the subcritical and alternating models (see Definition
\ref{def_sub_alt_crit}): if each burnt/frozen vertex produces
profit at a rate $\frac{1}{N} \$ $ per time unit after it has been
frozen, but each lightning (even the ones hitting burnt vertices)
costs $\frac{1}{N \cdot m_0(0)} \$ $, then asymptotically (as $N
\to \infty$)
 the
critical model is the best choice if we want to maximize our
profit on $\lbrack 0, T \rbrack$. We reformulate this extremum
principle in terms of the differential equations
\eqref{subcrit_smol_frozen_integal_eq},
\eqref{crit_smol_frozen_integral_eq},
\eqref{alter_smol_frozen_eq}.

 The asymptotic value of our profit produced by burnt vertices as
$N \to \infty$ is $\int_0^T \Phi(t)dt$ according to Theorem
\ref{thm_convergence_of_process_to_eqn}.
 The asymptotic cost of lightnings is $\int_0^T
\lambda(t)dt$ for the solution of
\eqref{subcrit_smol_frozen_integal_eq}, but it is zero for
\eqref{crit_smol_frozen_integral_eq} and
\eqref{alter_smol_frozen_eq}, since the price we have to pay for
the lightnings vanishes in the case of critical and alternating
models as $N \to \infty$.

\begin{theorem}\label{extremum_theorem}
We fix  $\conf{v}(0) \in \myNz$. Let  $\conf{v}^{crit}(t)$ denote
the solution of \eqref{crit_smol_frozen_integral_eq} with initial
condition $\conf{v}(0)$  and let  $\conf{v}^{sub}$ denote the
solution of \eqref{subcrit_smol_frozen_integal_eq} with lightning
rate function $\lambda(t)$ and
 the same initial condition. Then for any $T>0$
\be \label{subcrit_functional_extremum}
 \int_0^T \Phi^{sub}(t)dt-\int_0^T \lambda(t)\, dt \leq \int_0^T \Phi^{crit}(t)dt-\int_0^T 0\, dt
\ee

If  $\conf{v}^{alt}(t)$ denotes the solution of
\eqref{alter_smol_frozen_eq} with an arbitrary sequence of burning
times and initial condition $\conf{v}(0)$ then \be
\label{alter_functional_extremum} \int_0^T \Phi^{alt}(t)dt \leq
\int_0^T \Phi^{crit}(t)dt \ee

\end{theorem}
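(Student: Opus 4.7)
The plan is to reduce both inequalities \eqref{subcrit_functional_extremum} and \eqref{alter_functional_extremum} to the single statement $X(T)\leq 0$ for a scalar function $X$ that extends the exponential tilt parameter $x^*$ of Theorem \ref{exp_tilt_rigidity} from the post-gelation interval $[\gt,T]$ to the whole of $[0,T]$. Concretely, I let $X\colon[0,T]\to\R$ be the unique solution of $\dot X(t)=m_0^{crit}(t)-m_0^{sub}(t)-\lambda(t)$ with $X(0)=0$ in the subcritical case, and of $\dot X(t)=m_0^{crit}(t)-m_0^{alt}(\bt_{M(t)})$ with $X(0)=0$ in the alternating case, where $m_0^{crit},m_0^{sub},m_0^{alt}$ come from the already constructed solutions.

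Next I verify that $w_k(t):=v_k^{sub}(t)e^{-kX(t)}$ satisfies the critical equation. Differentiating $w_k$ and substituting \eqref{subcrit_smol_frozen_integal_eq} together with the defining ODE for $X$, the combination $m_0^{sub}+\lambda+\dot X$ telescopes to $m_0^{crit}$ and one obtains
\[\dot w_k(t)=\frac{k}{2}\sum_{l=1}^{k-1}w_l(t)w_{k-l}(t)-k\,w_k(t)\,m_0^{crit}(t),\qquad w_k(0)=v_k(0).\]
Since the $k$-th equation involves only $w_1,\dots,w_k$, induction on $k$ together with the uniqueness of solutions of \eqref{crit_smol_frozen_integral_eq} (with the self-consistent coefficient $m_0^{crit}(t)$ viewed as a prescribed forcing) yields $w_k(t)=v_k^{crit}(t)$ for every $k$ and every $t\in[0,T]$. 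The alternating case is analogous: although $\dot X$ has jumps at the burning times $\bt_j$, the jumps of $\dot v_k^{alt}$ and of $-k v_k^{alt}\dot X$ cancel (each equals $k v_k^{alt}\theta^{alt}(\bt_j^-)$ in magnitude with opposite signs), so $\dot w_k$ stays continuous and the identity holds throughout $[0,T]$.

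Integrating the defining ODE produces $X(T)=\int_0^T(\Phi^{sub}-\Phi^{crit}-\lambda)\,dt$ in the subcritical case and $X(T)=\int_0^T(\Phi^{alt}-\Phi^{crit})\,dt$ in the alternating case, using $\Phi^{sub}-\Phi^{crit}=m_0^{crit}-m_0^{sub}$ and the decomposition $m_0^{alt}(\bt_{M(t)})=m_0^{alt}(t)+\theta^{alt}(t)$ from \eqref{alter_Phi_reformulations}. Hence both inequalities of the theorem are equivalent to $X(T)\leq 0$.

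To establish $X(T)\leq 0$ I sum the tilt identity just proved to obtain $m_0^{crit}(T)=\sum_k v_k(T)e^{-kX(T)}$ (with $v$ denoting $v^{sub}$ or $v^{alt}$). The right-hand side is a strictly decreasing function of $X(T)$ wherever finite and equals $m_0(T)$ at $X=0$, so $X(T)\leq 0$ is equivalent to $m_0^{crit}(T)\geq m_0(T)$. For $T\leq\gt$ this is immediate because $m_0^{crit}$ is constant equal to $m_0(0)$ on $[0,\gt]$ while $m_0$ is nonincreasing; for $T>\gt$, Theorem \ref{exp_tilt_rigidity} provides $v_k^{crit}(T)=v_k(T)e^{-kx^*(T)}$ with $x^*(T)\leq 0$ (since $m_0(T)=\sum_k v_k(T)<\infty$ puts $0$ in the convergence half-line defining $x^*$), and summing this identity gives $m_0^{crit}(T)\geq m_0(T)$ at once. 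The main technical subtleties I expect are the inductive uniqueness argument for the non-autonomous system governing each $w_k$ (handled one $k$ at a time because only $w_1,\dots,w_{k-1}$ enter the $k$-th convolution) and the jump-cancellation bookkeeping at the burning times in the alternating case.
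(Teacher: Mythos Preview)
Your proof is correct and takes a route that differs from the paper's. The paper works directly with the Legendre transform $G(t,w)$ of Definition~\ref{def_G_F_E}: the evolution formula \eqref{G_solution_inhom_subcrit} together with the nonnegativity $G(t,w)\ge 0$ from \eqref{G_F_w_star_using_X_zero} yield the inequality in one line upon substituting $w=0$ for $T\ge\gt$ (using \eqref{kritikus_celfuggveny}) and $w=\gt-T$ for $T\le\gt$. Your approach instead builds the exponential-tilt parameter $X(t)$ by hand, verifies that $v_k^{crit}(t)=v_k(t)e^{-kX(t)}$ via the triangular ODE system, and then reduces $X(T)\le 0$ to $m_0^{crit}(T)\ge m_0(T)$, which you obtain from Theorem~\ref{exp_tilt_rigidity}. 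The two arguments are closely related---for $T\ge\gt$ your $X(T)$ coincides with the paper's $-G(T,0)=x^*(T)$ by \eqref{x_star_and_X_G}---but yours avoids the Legendre-transform machinery at the cost of re-deriving the tilt identity and then invoking Theorem~\ref{exp_tilt_rigidity}, whose content you have essentially reproved along the way (you could shorten the last step by noting that your identity and Theorem~\ref{exp_tilt_rigidity} together force $X(T)=x^*(T)\le 0$ directly). The paper's argument is shorter and treats both regimes uniformly; yours is more self-contained at the level of the ODEs for the $v_k$.
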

\begin{remark}\label{sharp}
Let $T>\gt=\frac{1}{m_1(0)}$ and $\varepsilon>0$. For a suitable choice of $\lambda(t)$ we have
\be \label{subcritical_functional_sharp}
 \int_0^T \Phi^{sub}(t)dt-(1-\varepsilon)\int_0^T \lambda(t)\, dt > \int_0^T \Phi^{crit}(t)dt-(1-\varepsilon)\int_0^T 0\, dt
\ee
For a suitable choice of burning times
\be  \label{alternating_functional_sharp}
\int_0^T \Phi^{alt}(t)dt+ \varepsilon \Phi^{alt}(T) > \int_0^T \Phi^{crit}(t)dt+ \varepsilon \Phi^{crit}(T)
\ee
\end{remark}
The idea that the critical \emph{forest fire} model solves a
variational problem is already present in \cite{DrosselSchwabl}.

\section{Definitions, Transformations}\label{section_def_transf}

We consider a solution of the general frozen percolation equation
(see Definition \ref{definition_general_frozen_eqs}.).


Denote the Laplace transform (generating function) of
$\conf{v}(t)$ by
\begin{equation}\label{gfdeffff}
V(t,x):=\sum_{k=1}^{\infty} v_k(t)e^{-kx}
\end{equation}
 for
$x>0$. Then $V(t,0)=V(t,0_+)=m_0(t)$ and by dominated convergence
for $x>0$ \eqref{subcrit_smol_frozen_integal_eq} is transformed
into
 \be \label{general_frozen_Laplace_V}
   V(t,x)=V(0,x)+\int_0^t V'(s,x)
    \left( -V(s,x)+ \left(m_0(0)-\Phi(s)\right) +\lambda(s)\right)
    ds
 \ee
In the sequel we denote the derivative of functions $f(t,x)$
 with respect to the time and space variables by $\dot f (t,x)$ and $f'(t,x)$, respectively.

 Let  \be \label{def_U} U(t,x):=V(t,x)-(m_0(0)- \Phi(t)) \ee
Thus \eqref{general_frozen_Laplace_V} is transformed into
 \be \label{inhom_subcrit_U}  U(t,x)= U(0,x)+\int_0^t
-U(s,x)\deri{U}{x}(s,x)+\lambda(s)\deri{U}{x}(s,x)ds +\Phi(t)
 \ee
 Since $V(t,\cdot)$ is a Laplace transform we have
\begin{equation}\label{U_properties1}
 U(t,0)=-\theta(t) \qquad \deri{U}{x}(t,0)=-m_1(t) \qquad
 \lim_{x \to \infty} U(x)=-m_0(0)+\Phi(t)
 \end{equation} and $U$ is a monotone
decreasing convex function of the variable $x$  for every $t$.

\begin{definition} \label{def_of_X}
Denote by $X(t,u)$ the inverse function of $U(t,x)$ with respect
to $x$, that is $U(t,X(t,u))=u$.

The domain of $X(t,u)$ in the variable $u$ is $(-m_0(t)+\Phi(t)
,-\theta(t) \rbrack $.
\end{definition}
\be \label{X_theta_zero} X(t,-\theta(t)) = 0 \ee
The notion of $X(t,\cdot)$ and a version of the following lemma
is already present in \cite{ZiffErnstHendriks}.
\begin{lemma}\label{lemma_X_solution} If $X(t,u)$ is defined using a solution of
 the general frozen percolation equation
 then the following
 identity holds:
 \be \label{X_solution_inhom_subcrit}
 X(t,u)=X\left(0,u-\Phi(t)\right) +t\cdot(u-\Phi(t))-\int_0^t \lambda(s)ds
 +
 \int_0^t \Phi(s)ds
 \ee
\end{lemma}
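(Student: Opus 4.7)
\textbf{Proof plan for Lemma \ref{lemma_X_solution}.}

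The natural approach is to derive a first-order PDE for $X(t,u)$ from equation \eqref{inhom_subcrit_U}, and then solve it by the method of characteristics. First I treat the case when $\Phi$ is absolutely continuous (the subcritical and critical regimes), and then extend to the alternating regime where $\Phi$ has jumps.

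\textbf{Step 1: PDE for $U$.} Differentiating \eqref{inhom_subcrit_U} with respect to $t$ (assuming $\Phi$ is differentiable) gives
\begin{equation*}
\dot{U}(t,x) \;=\; -U(t,x)\,U'(t,x) + \lambda(t)\,U'(t,x) + \dot\Phi(t),
\end{equation*}
where $U' = \partial_x U$. Convexity of $V(t,\cdot)$ (hence of $U(t,\cdot)$) ensures that $U'(t,x)<0$ on the interior of the domain of $X(t,\cdot)$, so inversion is legitimate.

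\textbf{Step 2: PDE for $X$.} Differentiating the defining identity $U(t,X(t,u))=u$ in $t$ and $u$ yields $\dot U+U'\dot X=0$ and $U'\,X'=1$. Substituting $U(t,X)=u$ into the formula for $\dot U$ above and solving for $\dot X$ gives the transport equation
\begin{equation*}
\dot X(t,u) + \dot\Phi(t)\,X'(t,u) \;=\; u-\lambda(t).
\end{equation*}

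\textbf{Step 3: Characteristics.} The characteristic variable is $w:=u-\Phi(t)$, so I introduce
\begin{equation*}
Y(t,w) \;:=\; X\big(t,\,w+\Phi(t)\big).
\end{equation*}
The chain rule together with the transport equation gives the clean $w$-parametrized ODE
\begin{equation*}
\dot Y(t,w) \;=\; w+\Phi(t)-\lambda(t).
\end{equation*}
Since $\Phi(0)=0$, the initial value is $Y(0,w)=X(0,w)$. Integrating in $t$,
\begin{equation*}
Y(t,w) \;=\; X(0,w) + tw + \int_0^t \Phi(s)\,ds - \int_0^t \lambda(s)\,ds,
\end{equation*}
and substituting $w = u-\Phi(t)$ yields exactly \eqref{X_solution_inhom_subcrit}.

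\textbf{Step 4: Jumps of $\Phi$.} In the alternating regime $\Phi$ has upward jumps at the burning times $\bt_j$; the vector $\conf{v}(t)$ itself is continuous there, so $V(t,x)$ is continuous in $t$ and $U(\bt_j^+,x)=U(\bt_j^-,x)+\theta(\bt_j^-)$. Hence for fixed $u$ the inverse $X$ satisfies $X(\bt_j^+,u)=X(\bt_j^-,u-\theta(\bt_j^-))$. A direct calculation from \eqref{X_solution_inhom_subcrit} using $\Phi(\bt_j^+)-\Phi(\bt_j^-)=\theta(\bt_j^-)$ shows that the right-hand side of \eqref{X_solution_inhom_subcrit} has the same jump, so that the identity, once established on each interval $(\bt_j,\bt_{j+1})$ by Steps 1--3, propagates across burning times by induction on $j$. (Alternatively one approximates the step function $\Phi$ by smooth $\Phi_\varepsilon$ and passes to the limit in \eqref{X_solution_inhom_subcrit}, using continuity of $X$ in its arguments on the interior of the domain.)

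The main technical obstacle is justifying the termwise differentiation of the Laplace-transformed equation and the inversion via $X=U^{-1}$ \emph{uniformly on compact subintervals of the domain} $\bigl(-m_0(t)+\Phi(t),-\theta(t)\bigr]$; this rests on the strict convexity and smoothness of $U(t,\cdot)$ away from the boundary, and on the fact that the absolute continuity of the $v_k(t)$ (guaranteed by \eqref{general_frozen_eqs}) transfers to $V(t,x)$ uniformly on $x\geq \eps>0$.
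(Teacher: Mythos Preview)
Your argument is correct, but it is organised differently from the paper's proof. The paper runs characteristics in $x$-space: it fixes $x(0)$, defines the curve $x(t)=x(0)+\int_0^t\bigl(U(s,x(s))-\lambda(s)\bigr)ds$, and shows by a direct $\Ordo(dt^2)$ telescoping estimate (using only the integral form \eqref{inhom_subcrit_U} and the bounds \eqref{trivibounds}) that $U(t,x(t))=U(0,x(0))+\Phi(t)$; substituting this back into the characteristic equation and then inverting gives \eqref{X_solution_inhom_subcrit}. You instead differentiate first to obtain the transport equation $\dot X+\dot\Phi\,X'=u-\lambda$ for $X$ in $(t,u)$-space and integrate along the straightened characteristic $w=u-\Phi(t)$. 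The two computations are dual (characteristics for $U$ versus characteristics for its inverse $X$), and your derivation of the transport equation and its integration are clean and correct.

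What the paper's route buys is uniformity: because it never leaves the integral formulation, the $\Ordo(dt^2)$ increment bound absorbs the jumps of $\Phi$ automatically, so no separate Step~4 is needed and no smoothness of $\Phi$ is assumed anywhere. Your route is more transparent once one is willing to assume $\Phi$ absolutely continuous, but then requires your induction-across-jumps argument (which you carried out correctly: $V$ continuous $\Rightarrow$ $U$ jumps by $\Delta=\Phi(\bt_j^+)-\Phi(\bt_j^-)$ $\Rightarrow$ $X(\bt_j^+,u)=X(\bt_j^-,u-\Delta)$, and the right-hand side of \eqref{X_solution_inhom_subcrit} has the same jump). The regularity caveat you flag at the end is exactly the point where the paper's explicit bounds $\abs{U'}\le m_0/x_{min}$, $\abs{U''}\le m_0/x_{min}^2$ on $\{x\ge x_{min}\}$ do the work; invoking them would make your Step~1 rigorous as well.
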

\begin{proof}  We fix an $x_{min}>0$. For any $x\geq x_{min}$ we have
 \be \label{trivibounds}
\abs{U(t,x)} \leq m_0(0), \quad \abs{\deri{U}{x}(t,x)} \leq
\frac{m_0(t)}{x_{min}}, \quad \abs{\kderi{U}{x}(t,x)}\leq
\frac{m_0(t)}{x_{min}^2},\ee
 moreover  $\sup_{0\leq t \leq T}
\lambda(t) < +\infty$. For an $x(0)> x_{min}$ denote by $x(t)$ the
solution of the integral equation \be \label{characteristic_curve}
x(t)=x(0) + \int_0^t U(s,x(s))-\lambda(s)ds \ee This equation is
well-posed on the domain $x(t)\geq x_{min}$, since
$U(s,x)-\lambda(s)$ is bounded and Lipschitz-continuous in $x$.

Moreover \[x(t+dt)-x(t)=\Ordo(dt),\quad
\abs{U(t,x(t))-U(t,x(t+dt))}=\Ordo(\frac{dt}{x_{min}}).\] If we
differentiate \eqref{inhom_subcrit_U} w.r.t. $x$ we get
$\abs{\deri{U}{x}(t+dt,x)-\deri{U}{x}(t,x)}
=\Ordo(\frac{dt}{x_{min}^2})$.
 \begin{multline*} U(t+dt,x(t+dt))-U(t,x(t)) - \left(\Phi(t+dt)-\Phi(t)\right)=\\
 \big(U(t+dt,x(t+dt))-U(t,x(t+dt))\big)+\\ \big(U(t,x(t+dt))-U(t,x(t))\big)-\left(\Phi(t+dt)-\Phi(t)\right) =\\
 \int_t^{t+dt}-U(s,x(t+dt))\deri{U}{x}(s,x(t+dt)) +\lambda(s) \deri
{U}{x}(s,x(t+dt))ds + \\
 \deri{U}{x}(t,x(t+dt))\int_t^{t+dt} U(s,x(s))-\lambda(s)ds
 +\Ordo(\frac{dt^2}{x_{min}^2})=\\
 \int_t^{t+dt} U(s,x(t+dt)) \big( \deri{U}{x}(t,x(t+dt))- \deri{U}{x}(s,x(t+dt)) \big) ds +\\
  \int_t^{t+dt} \deri{U}{x}(t, x(t+dt)) \big(
 U(s,x(s))-U(s,x(t+dt)) \big) ds + \\ \int_t^{t+dt}
 \lambda(s)\big(\deri{U}{x}(s,x(t+dt))-\deri{U}{x}(t,x(t+dt)) \big) ds +
 \Ordo(\frac{dt^2}{x_{min}^2})= \Ordo(\frac{dt^2}{x_{min}^2})
\end{multline*}
Thus $U(t,x(t))=U(0,x(0))+\Phi(t)$, and if we substitute this back
into \eqref{characteristic_curve}, we get
\[x(t)=x(0)+t U(0,x(0))+\int_0^t \Phi(s)ds -\int_0^t \lambda(s)ds
\]
By the definition of $X(t,u)$ we have $X(t,U(t,x(t)))=x(t)$, and
by substituting \[u=U(0,x(0))+\Phi(t)\] we obtain
\eqref{X_solution_inhom_subcrit}.
\end{proof}

Since $\conf{v}(0) \in \myNz$, $V(0,x)$ is well-defined and
analytic for all $x \in \R$, thus $X(0,u)$ can be analytically
extended to $(-m_0(0), +\infty)$. \eqref{X_solution_inhom_subcrit}
makes it possible  to extend $X(t,u)$ to $(-m_0(0)+\Phi(t),
+\infty)$ analytically. The extended $X(t,u)$ is a strictly convex
function of the $u$ variable. If we differentiate
\eqref{X_solution_inhom_subcrit} w.r.t. $u$, we get \be
\label{X_derivative_solution} \deri{X}{u}(t,u)=\deri{X}{u}(0,
u-\Phi(t))+t\ee

\begin{definition} \label{def_G_F_E} Define $F(t,w)$ by the identity
\be \label{defF} F(t,-\deri{X}{u}(t,u))=-u \ee Thus $-F(t,w)$ is
the inverse function of $-X'(t,u)$. If $\hat{X}$ denotes the
Legendre-transform of $X$ w.r.t. the variable $u$, then \be
\label{defG} G(t,w):=\hat{X}(t,-w)=-\min_{u} \{ wu+X(t,u) \}= w
F(t,w)-X(t,-F(t,w)) \ee Let
\begin{equation}\label{def_critical_core}
E(t,w)=\kderi{G}{w}(t,w)=\deri{F}{w}(t,w).
\end{equation} We call
$E(t,\cdot)$ the \emph{critical core} of $\conf{v}(t)$. If we use the
extended definition of $X$ then $G(t,w)$ is well-defined and
analytic for all $w>-t$.
\end{definition}
We have
\be \label{F_U_E_U_formulas}
F(t,-\frac{1}{U'(t,x)})=-U(t,x)\quad \text{and}\quad E(t, -\frac{1}{U'(t,x)})=
\frac{(-U'(t,x))^3}{U''(t,x)}
\ee

It follows from the properties of the Legendre-transformation and
\eqref{X_solution_inhom_subcrit} that \be
\label{G_solution_inhom_subcrit} G(t,w)=G(0,w+t)-w\cdot
\Phi(t)-\int_0^t\Phi(s)ds +\int_0^t \lambda(s)ds
 \ee
\be \label{F_solution_inhom_subcrit}
F(t,w)=F(0,w+t)-\Phi(t)
 \ee
\be \label{E_solution_translated}
E(t,w)=E(0,w+t)
\ee

$G(t,\cdot)$ is strictly convex and $G$ determines $X$ uniquely
since the Legendre-transformation is invertible. Define \be
\label{def_of_w_star} w^*(t):= -\deri{X}{u}(t,0)\; \iff\;
F(t,w^*(t))=0 \; \iff \; \text{argmin}_w G(t,w)= w^*(t)
 \ee
 \be
\label{G_F_w_star_using_X_zero}
 X(t,0)=0 \; \implies \;
 G(t,w^*(t))=0\;
 \implies\;
 \forall w \; G(t,w) \geq 0 \ee
 \be \label{theta_zero_implies_w_star_positive} \theta(t)=0
\quad \implies \quad w^*(t)=\frac{1}{m_1(t)} \geq 0
 \ee
 \be
\label{x_star_and_X_G} x^*(t)=\inf \lbrace x: \sum_{k=1}^{\infty}
v_k(t) e^{-kx} < +\infty \rbrace =
 \min_{u} X(t,u)= X(t,-F(t,0))=-G(t,0) \ee

\section{The frozen percolation equations are
well-posed}\label{section_wellposed}

\begin{lemma}\label{alter_eq_well_posed}
The alternating equation \eqref{alter_smol_frozen_eq} is
well-posed.
\end{lemma}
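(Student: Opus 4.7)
The plan is to reduce the alternating equation to a finite sequence of constant-coefficient Smoluchowski-type problems, one for each inter-burning interval. Since the burning times $\bt_0 < \bt_1 < \bt_2 < \cdots$ have no accumulation points, it is enough to construct the unique solution on any finite interval $[0,T]$; on such an interval only finitely many $\bt_i$ occur, so I would proceed inductively over the subintervals $[\bt_i, \bt_{i+1} \wedge T]$.

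On the $i$-th subinterval one has $M(t) \equiv i$, so $m_0(\bt_{M(t)}) = m_0(\bt_i) =: c_i$ is a fixed nonnegative constant determined by the data from the previous step (or by $\conf{v}(0) \in \myNz$ when $i=0$). Equation \eqref{alter_smol_frozen_eq} then coincides with \eqref{general_frozen_eqs} for the choice $\lambda(t) \equiv 0$ and $\Phi(t) \equiv m_0(0) - c_i$. Passing to the Laplace transform $V(t,x) = \sum_k v_k(t) e^{-kx}$, the equation becomes the quasilinear first-order PDE
\[
\dot V(t,x) = V'(t,x)\bigl(c_i - V(t,x)\bigr),
\]
which is well-posed by the method of characteristics, exactly as in the proof of Lemma \ref{lemma_X_solution}. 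Specifically, the characteristic curves $\dot x(s) = V(s,x(s)) - c_i$ remain in $\{x \geq x_{\min}\}$ for any fixed $x_{\min} > 0$ by virtue of the a priori bounds \eqref{trivibounds}, and $V$ is conserved along them; this determines $V(t,x)$ uniquely on the subinterval for $x \geq x_{\min}$, from which each $v_k(t)$ is recovered as a Taylor coefficient of $V(t,\cdot)$ at $x = +\infty$.

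To concatenate across the burning time $\bt_{i+1}$, I would use that although the control $\Phi$ jumps there by $\theta(\bt_{i+1})$, the functions $v_k$ themselves are continuous at $\bt_{i+1}$: burning removes only the (effectively infinite-size) giant component and does not touch the finite-size masses $v_k$. Thus $v_k(\bt_{i+1})$ is unambiguously defined and serves as the initial datum for the next subinterval, whose constant is $c_{i+1} = \sum_k v_k(\bt_{i+1}) \leq c_i$. Iterating over $i = 0, 1, \ldots, M(T)$ produces a unique $\conf{v}(\cdot)$ on $[0,T]$; a short calculation gives $\dot m_0(t) = -m_1(t)\theta(t)$ on each subinterval, which together with $c_{i+1} \leq c_i$ yields the monotonicity of $\Phi$ and the nonnegativity of $\theta$ required by Definition \ref{definition_general_frozen_eqs}.

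The main technical obstacle is the same one already present in Lemma \ref{lemma_X_solution}: after the first coagulation $\conf{v}(t) \notin \myNz$, so $V(t,\cdot)$ is no longer entire, and after gelation $V'(t,0^+)$ may blow up. This forces the characteristic flow to be constructed on $\{x \geq x_{\min} > 0\}$ rather than on all of $\R_+$; fortunately the bounds \eqref{trivibounds} depend only on $m_0(\bt_i)$ and $x_{\min}$ and not on whether gelation occurs within the subinterval, so letting $x_{\min} \to 0$ a posteriori recovers all the $v_k$.
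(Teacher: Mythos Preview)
Your inductive skeleton over the intervals $[\bt_i,\bt_{i+1}]$ is exactly what the paper does. The difference is what happens on each subinterval: the paper simply observes that once $m_0(\bt_i)$ is a fixed constant, the system \eqref{alter_smol_frozen_eq} is \emph{triangular}---the right-hand side of the equation for $v_k$ involves only $v_1,\dots,v_k$---so one applies Picard--Lindel\"of first to $v_1$, then to $v_2$, and so on. That is the entire argument; no Laplace transform, no characteristics, no $x_{\min}$.

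Your route via the PDE for $V$ is considerably heavier and, as written, has a gap on the existence side. The characteristics argument (as in Lemma~\ref{lemma_X_solution}) shows that \emph{if} a solution exists then $V$, hence the $v_k$, are uniquely determined; but to get existence you would still need to argue that the function $V(t,x)$ produced by the characteristic flow is, for each $t$, of the form $\sum_k v_k(t)e^{-kx}$ with nonnegative coefficients, and that these coefficients actually satisfy the original ODEs. You assert this (``recovered as a Taylor coefficient at $x=+\infty$'') without justification. The triangular ODE structure sidesteps all of this, and in particular makes your ``main technical obstacle'' paragraph---non-entire $V$, possible blow-up of $V'(t,0^+)$---irrelevant to well-posedness: those issues concern the generating function, not the finite ODE system for $v_1,\dots,v_k$.
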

\begin{proof}
If we are given the sequence of burning times $0 < \bt_1 < \bt_2<
\dots$ the solution of \eqref{alter_smol_frozen_eq} can be
uniquely constructed by using induction on $i$: if we already have
the solution on $\lbrack 0, \bt_i \rbrack$, then we are given
$m_0(\bt_i)$, so we can uniquely solve the sequence of ordinary
differential equations \eqref{alter_smol_frozen_eq} for $v_1, v_2,
\dots$ on $\lbrack \bt_i, \bt_{i+1} \rbrack$ by repeatedly
applying the Picard-Lindel\"of theorem, since the equation for $v_k$
only contains $v_1,\dots, v_k$ on its right-hand side.
\end{proof}

\begin{lemma} \label{subcrit_integral_eq_uniqueness}
The solution of the integral equations
\eqref{subcrit_smol_frozen_integal_eq} is unique for every initial
condition $\conf{v}(0) \in \myNz$ if $\lambda(t)$ is nonnegative
and continuous.
\end{lemma}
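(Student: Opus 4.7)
My plan is to reduce uniqueness of the full component-size vector $\conf v(t)$ to uniqueness of the scalar function $\Phi(t) = m_0(0) - m_0(t)$. Once $\Phi$ is known, the equation \eqref{subcrit_smol_frozen_integal_eq} for each $v_k$, given $v_1, \ldots, v_{k-1}$, becomes a linear scalar integral equation in $v_k$ with continuous coefficients, and Picard--Lindel\"of applied inductively in $k$, exactly as in the proof of Lemma \ref{alter_eq_well_posed}, then yields uniqueness of every $v_k(t)$.

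To pin down $\Phi$, I will apply Lemma \ref{lemma_X_solution}: the associated $X(t,u)$ satisfies \eqref{X_solution_inhom_subcrit}. Subcriticality forces $\theta(t) \equiv 0$, so by \eqref{X_theta_zero} we have $X(t,0)=0$. Moreover, since $\conf v(0) \in \myNz$, the function $X(0,\cdot)$ extends analytically to $(-m_0(0), +\infty)$, so evaluating \eqref{X_solution_inhom_subcrit} at $u=0$ is legitimate and produces the scalar identity
\[X(0,-\Phi(t)) = t\Phi(t) + \int_0^t \lambda(s)\,ds - \int_0^t \Phi(s)\,ds.\]
Differentiating in $t$ yields the ODE
\[\dot\Phi(t) = \frac{-\lambda(t)}{X'(0,-\Phi(t)) + t}, \qquad \Phi(0)=0,\]
and by \eqref{X_derivative_solution} its denominator equals $X'(t,0) = -1/m_1(t)$, which is strictly negative as long as $m_1(t)<+\infty$.

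The right-hand side of this ODE is Lipschitz in $\Phi$ on any compact region where the denominator stays bounded away from $0$: $X'(0,\cdot)$ is real-analytic on $(-m_0(0), +\infty)$, and $-1/m_1(t)$ stays bounded once $m_1(t)$ does. Picard--Lindel\"of, or equivalently a mean-value plus Gronwall argument applied directly to the difference $\Phi - \tilde\Phi$ of two candidate solutions (both of which start from $\Phi(0)=0$), then forces $\Phi = \tilde\Phi$. The main obstacle is the a priori control on $m_1(t)$, which I plan to obtain by summing \eqref{subcrit_smol_frozen_integal_eq} weighted by $k$ and using the Cauchy--Schwarz bound $m_2 \geq m_1^2/m_0(0)$; this gives the Riccati-type inequality $\dot m_1(t) \leq m_1(t)^2(1 - \lambda(t)/m_0(0))$ which, combined with the finite initial data $m_1(0)<+\infty$ coming from $\conf v(0) \in \myNz$ and, if needed, a lower bound $\lambda(t) \geq \lambda_{inf}>0$, keeps $m_1(t)$ finite and bounded on any interval $[0,T]$, closing the argument. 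In the degenerate case $\lambda \equiv 0$ the ODE trivially gives $\Phi \equiv 0$, so $m_0(t)=m_0(0)$ is directly determined and the inductive Picard--Lindel\"of step alone already suffices.
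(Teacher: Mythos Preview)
Your reduction to uniqueness of $\Phi$ and your use of $X(t,0)=0$ to extract a scalar relation are both in the spirit of the paper's approach, but the argument has a genuine gap precisely where the lemma matters most.

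The claim ``in the degenerate case $\lambda\equiv 0$ the ODE trivially gives $\Phi\equiv 0$'' is false. For the critical equation, Lemma~\ref{crit_eq_existence} shows $\Phi(t)=F(0,t)>0$ for $t>\gt$. What goes wrong is that your differentiated relation reads $-\dot\Phi(t)\bigl(X'(0,-\Phi(t))+t\bigr)=\lambda(t)$, and the factor $X'(0,-\Phi(t))+t=X'(t,0)=-1/m_1(t)$ vanishes exactly when $m_1(t)=+\infty$. In the critical regime this happens for every $t\ge\gt$, so the equation degenerates to $0=0$ and places no constraint on $\dot\Phi$. Your ODE therefore cannot distinguish the true critical $\Phi$ from the zero function beyond gelation; the Picard--Lindel\"of step is simply unavailable there. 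This is why the paper instead passes to the Legendre transform $G(t,w)$ and exploits the soft fact $\min_w G_i(t,w)=0$ with $w_i^*(t)\ge 0$: that argument never needs $m_1(t)<\infty$ and never differentiates $\Phi$.

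Even in the strictly subcritical case your a~priori control on $m_1$ is too weak. The Cauchy--Schwarz bound $m_2\ge m_1^2/m_0$ yields $\dot m_1\le m_1^2\bigl(1-\lambda/m_0(0)\bigr)$, but for $\lambda_{\inf}<m_0(0)$ the right-hand side is a positive multiple of $m_1^2$ and does not prevent finite-time blowup. The bound that does work is the cubic one coming from Lemma~\ref{lemma_upper_lower_bound_on_E}, namely $m_2(t)\ge m_1(t)^3/\phisup$ (compare the derivation of \eqref{m_1_bound_for_general_frozen_eq}), which gives $\dot m_1\le m_1^2\bigl(1-\lambda_{\inf}\,m_1/\phisup\bigr)$ and hence $m_1(t)\le\max\{m_1(0),\phisup/\lambda_{\inf}\}$. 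With this correction your ODE argument does go through when $\lambda_{\inf}>0$, but it still cannot cover the case $\lambda\equiv 0$, so as written the proposal does not prove the lemma.
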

\begin{remark}
Choosing $\lambda(t)\equiv 0$ implies the uniqueness of the
solutions of \eqref{crit_smol_frozen_integral_eq}.
\end{remark}
\begin{proof}
In order to prove the uniqueness of the solution of
\eqref{subcrit_smol_frozen_integal_eq}, we only have to prove that
given two solutions with the same initial condition, the function
$\Phi(t)=m_0(0)-m_0(t)$ determined by the two solutions is the
same, because $m_0(t)$ and \eqref{subcrit_smol_frozen_integal_eq}
determines $v_k(t)$ for all $k$ uniquely. For a solution
$\conf{v}(t)$ of \eqref{subcrit_smol_frozen_integal_eq} we can
define $U$ by \eqref{def_U}, then $X$ by Definition
\ref{def_of_X}., which satisfies \eqref{X_solution_inhom_subcrit}
and the $G$ of Definition \ref{def_G_F_E}. satisfies
\eqref{G_solution_inhom_subcrit}.

 Assume that $G_1$ and
$G_2$ are obtained this way from two solutions of
\eqref{subcrit_smol_frozen_integal_eq} with the same initial
condition $G(0,w)$. Let $\tilde{G}=G_1-G_2$ and
$\tilde{\Phi}=\Phi_1-\Phi_2$. Then
\[  \tilde{G}(t,w)=-w\cdot
\tilde{\Phi}(t)-\int_0^t\tilde{\Phi}(s)ds \] Now by
\eqref{subcrit_Phi_def} we have $\theta(t)=0$, thus
\eqref{X_theta_zero} $\implies \, X(t,0)=0$, and
\eqref{G_F_w_star_using_X_zero} $\implies \,
 \min_w
G_1(t,w)=\min_w G_2(t,w)=0$ and
\eqref{theta_zero_implies_w_star_positive} $\implies \,
w^{*}_i(t):=\text{argmin}_w G_i(t,w) \geq 0$ for $i=1,2$, thus we
have $\tilde{G}(t,w^{*}_1(t))\leq 0$ and
$\tilde{G}(t,w^*_2(t))\geq 0$. Thus $\tilde{\Phi}(t)$ and
$\int_0^t\tilde{\Phi}(s)ds$ cannot have the same sign. But if
$(t_1, t_2)$ is a maximal interval such that for $t_1<t<t_2$ we
have $\int_0^t\tilde{\Phi}(s)ds>0$ then
$\int_0^{t_1}\tilde{\Phi}(s)ds=0$ and \[t \in \lbrack t_1, t_2
\rbrack \implies \int_0^t\tilde{\Phi}(s)ds \geq 0 \implies
\tilde{\Phi}(t) \leq 0 \implies \int_{t_1}^t \tilde{\Phi}(s)ds
\leq 0
\]
which contradicts the definition of $t_1$ and $t_2$. Thus
$\int_0^t\tilde{\Phi}(s)ds \leq 0$ for all $t$ and interchanging
the roles of $G_1$ and $G_2$ we get $\int_0^t\tilde{\Phi}(s)ds
\equiv 0$, so $\Phi_1(t) \equiv \Phi_2(t)$.
\end{proof}

\begin{lemma} \label{existence_lemma_1}
 If we find a function $\varphi(t)$ such that defining
$\Phi(t):=\int_0^t \varphi(s)ds$ and $G(t,w)$ by
\eqref{G_solution_inhom_subcrit} we have
 \be\label{subcrit_G_condition} \min_{w} G(t,w) = 0 \quad   \text{ and } \quad
w^*(t)=\mathrm{argmin}_{w} G(t,w)\geq 0 \ee for all $t$, then the
solution of \eqref{general_frozen_eqs} with the same
$\lambda(\cdot)$, $\Phi(\cdot)$ and initial condition satisfies
\eqref{subcrit_smol_frozen_integal_eq}.
\end{lemma}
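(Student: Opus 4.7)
The plan is to show that under the stated hypotheses we must have $\theta(t)\equiv 0$; once this is known, \eqref{theta_def_nonnegative_general_frozen_eq} gives $\Phi(t)=m_0(0)-m_0(t)$, which is precisely \eqref{subcrit_Phi_def}, so \eqref{general_frozen_eqs} collapses to \eqref{subcrit_smol_frozen_integal_eq}. To access $\theta(t)$ I would first translate the assumption $\min_w G(t,w)=0$ into a statement about $X(t,\cdot)$. Differentiating \eqref{defG} in $w$ and using the defining relation $\deri{X}{u}(t,-F(t,w))=-w$ of $F$ gives $\ppp{G}{w}(t,w)=F(t,w)$, so the argmin of $G(t,\cdot)$ is the unique $w^*(t)$ with $F(t,w^*(t))=0$, matching \eqref{def_of_w_star}. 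Plugging $F(t,w^*(t))=0$ back into \eqref{defG} yields $G(t,w^*(t))=-X(t,0)$, so the hypothesis $\min_w G(t,w)=0$ is equivalent to $X(t,0)=0$, and similarly $w^*(t)\geq 0$ is equivalent to $\deri{X}{u}(t,0)\leq 0$.

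Next comes the key convexity step. The identity \eqref{X_theta_zero} already supplies the zero $X(t,-\theta(t))=0$, so combined with the above we have two (a priori possibly coincident) zeros of $X(t,\cdot)$. Since $\conf{v}(0)\in\myNz$, the representation \eqref{X_solution_inhom_subcrit} extends $X(t,\cdot)$ to a real-analytic, strictly convex function on $(-m_0(0)+\Phi(t),+\infty)$, and by \eqref{control_function_Phi_increasing_bounded} the point $u=0$ lies in this interval. If $\theta(t)>0$, then the two zeros are distinct, so by strict convexity $X(t,\cdot)$ attains a negative minimum at some $u_0\in(-\theta(t),0)$ and is strictly increasing on $(u_0,+\infty)$; in particular $\deri{X}{u}(t,0)>0$, which forces $w^*(t)<0$ and contradicts the hypothesis. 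Hence $\theta(t)=0$ for all $t$, finishing the proof.

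The whole argument is essentially bookkeeping with the Legendre-transform identities of Section \ref{section_def_transf}; the one non-tautological ingredient is the two-zeros argument for a strictly convex function. The main subtlety is that we must evaluate $X(t,\cdot)$ at $u=0$, which lies outside its natural probabilistic domain $(-m_0(t)+\Phi(t),-\theta(t)]$ exactly when $\theta(t)>0$, so the assumption $\conf{v}(0)\in\myNz$ (needed for the analytic extension provided by \eqref{X_solution_inhom_subcrit}) is essential.
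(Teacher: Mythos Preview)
Your proof is correct and follows essentially the same route as the paper: both translate the hypothesis on $G$ via the Legendre correspondence into $X(t,0)=0$ and $X'(t,0)\le 0$, and then use strict convexity of $X(t,\cdot)$ to force $\theta(t)=0$. The only cosmetic difference is that the paper argues directly (strict convexity plus $X'(t,0)\le 0$ makes $X(t,\cdot)$ strictly decreasing on $u<0$, so its inverse $U$ has $U(t,0)=0$), whereas you phrase the same convexity fact as a two-zeros contradiction using \eqref{X_theta_zero}; your remark about needing $\conf{v}(0)\in\myNz$ for the analytic extension at $u=0$ is also in line with the paper's setup.
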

\begin{proof}
Since the Legendre-transformation is invertible, from
 \eqref{subcrit_G_condition} we get
\[X(t,0)=0 \quad \text{ and } \quad
X'(t,0)\leq 0.\] $X(t,u)$ is strictly decreasing for $u<0$, thus
it is the inverse function of an $U(t,x)$ satisfying $U(t,0)=0$.
If we plug $\Phi(\cdot)$ into \eqref{general_frozen_eqs} then we
get $\theta(t)=-U(t,0)=0$, therefore \eqref{subcrit_Phi_def} is
satisfied.
\end{proof}

\begin{lemma}\label{crit_eq_existence}
The $\Phi$ of the unique solution of
\eqref{crit_smol_frozen_integral_eq} is \be \Phi(T) = \left\{
\begin{array}{ll}
0 & \mbox{if $t \leq \gt$}\\
F(0,T) & \mbox{if $t \geq \gt$}
\end{array}
\right. \ee
where $\gt=\frac{1}{m_1(0)}$.
\be\label{kritikus_celfuggveny} \int_0^T \Phi(t)dt = \left\{
\begin{array}{ll}
0 & \mbox{if $T \leq \gt$}\\
G(0,T) & \mbox{if $T \geq \gt$}
\end{array}
\right. \ee

\end{lemma}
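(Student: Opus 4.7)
The plan is to construct $\Phi$ by an ansatz, verify via Lemma~\ref{existence_lemma_1} that it defines a solution of \eqref{crit_smol_frozen_integral_eq}, and then identify this solution as \emph{the} unique one using Lemma~\ref{subcrit_integral_eq_uniqueness}. The ansatz is $\Phi^\star(T):=F(0,T)\ind[T\geq \gt]$; the two displayed formulas for $\Phi$ and $\int_0^T \Phi$ will follow from the identities $\partial_w G(0,\cdot)=F(0,\cdot)$ and $G(0,\gt)=0$, which are pure consequences of the Legendre setup in Section~\ref{section_def_transf}.

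First I would collect what the initial data give: by \eqref{theta_zero_implies_w_star_positive} and \eqref{def_of_w_star}, $w^*(0)=\gt=1/m_1(0)$; then \eqref{G_F_w_star_using_X_zero} gives $G(0,\gt)=0$, and differentiating \eqref{defG} together with the Legendre identity $X'(t,-F(t,w))=-w$ yields $\partial_w G(0,w)=F(0,w)$, so $F(0,\gt)=0$. Since $\partial_w F = E\geq 0$ by \eqref{def_critical_core} and the convexity of $U$ from \eqref{U_properties1}, $F(0,\cdot)$ is nondecreasing; hence $\Phi^\star\geq 0$, $\Phi^\star$ is continuous across $T=\gt$, and $\int_0^T \Phi^\star(t)\,dt = G(0,T)-G(0,\gt)=G(0,T)$ for $T\geq \gt$, reconciling the two parts of the statement.

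Second I plug $\Phi^\star$ into \eqref{G_solution_inhom_subcrit} with $\lambda\equiv 0$ and verify \eqref{subcrit_G_condition} in each regime. For $t\leq \gt$ one has $G(t,w)=G(0,w+t)$, whose argmin is $w^*(t)=\gt-t\geq 0$ with minimum value $G(0,\gt)=0$. For $t\geq \gt$,
\[
G(t,w)=G(0,w+t)-wF(0,t)-G(0,t),
\]
so $\partial_w G(t,w)=F(0,w+t)-F(0,t)$ vanishes at $w=0$, where $G(t,0)=0$; therefore $w^*(t)=0\geq 0$ and the minimum is $0$. Lemma~\ref{existence_lemma_1} then produces a solution of \eqref{crit_smol_frozen_integral_eq} whose $\Phi$ is $\Phi^\star$, and Lemma~\ref{subcrit_integral_eq_uniqueness} forces this to agree with the $\Phi$ of the unique solution.

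The one genuine subtlety is that $w=0$ must be the \emph{global} minimizer of $G(t,\cdot)$ for $t>\gt$, not just a critical point of a possibly non-strictly-convex function. This reduces to strict convexity of $G(0,\cdot)$, equivalently $E(0,\cdot)>0$. By \eqref{F_U_E_U_formulas}, $E(0,w)=(-U'(0,x))^3/U''(0,x)$, and for any $\conf{v}(0)\in\myNz$ with some $v_k(0)>0$ the analytic function $U''(0,x)=\sum_k k^2 v_k(0)e^{-kx}$ is strictly positive on the extended domain; so $F(0,\cdot)$ is strictly increasing past $\gt$, and the verification goes through. Everything else is routine bookkeeping.
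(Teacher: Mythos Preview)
Your proof is correct and follows essentially the same route as the paper: both define the candidate control function (you via $\Phi^\star(T)=F(0,T)\ind[T\geq \gt]$, the paper via its derivative $\varphi(t)=E(0,t)\ind[t\geq \gt]$), verify the hypotheses of Lemma~\ref{existence_lemma_1} by checking $G(t,0)=F(t,0)=0$ for $t\geq \gt$ via \eqref{G_solution_inhom_subcrit} and \eqref{F_solution_inhom_subcrit}, and conclude by the uniqueness Lemma~\ref{subcrit_integral_eq_uniqueness}. Your explicit treatment of the global-minimizer issue is a nice addition, though the paper handles it implicitly by having already recorded that $G(t,\cdot)$ is strictly convex just after Definition~\ref{def_G_F_E}.
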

\begin{proof}
The solution is unique according to Lemma
\ref{subcrit_integral_eq_uniqueness}. and to prove its existence
we only have to find a function $\varphi(t)$ that satisfies the
criteria of Lemma \ref{existence_lemma_1} (with $\lambda(t) \equiv
0$). We will show that \be \label{crit_varphi_def_formula}
\varphi(t)=\ind \lbrack t \geq \frac{1}{m_1(0)} \rbrack E(0,t)\ee
 does the job. For $t \leq \gt$
this is trivial by looking at \eqref{G_solution_inhom_subcrit}:
$G(t,w^*(t))=0$ and $w^*(t)=\frac{1}{m_1(0)}-t \geq 0$ if
$\Phi(t)\equiv 0$.

 We will show that
for $t \geq \gt$ we have $G(t,0) \equiv 0$ and $F(t,0) \equiv 0$,
that is $w^*(t) \equiv 0$. $F(0,\gt)=G(0,\gt)=0$ by
\eqref{G_F_w_star_using_X_zero} and $w^*(0)=\frac{1}{m_1(0)}=\gt$.
  $F(t,0)=0$ follows from \eqref{F_solution_inhom_subcrit} and
\[\Phi(t)=\int_0^t \varphi(s)ds=\int_{\gt}^t E(0,s)ds=F(0,t)-F(0,\gt)=F(0,t)\]
By \eqref{G_solution_inhom_subcrit} we have
\[ G(t,0)= G(0,t)-\int_0^t \Phi(s)ds = \int_{\gt}^t F(0,s)ds- \int_{\gt}^t F(0,s)ds=0 \]

\end{proof}

The well-posedness of the integral equation
\eqref{crit_smol_frozen_integral_eq} implies that of the
corresponding differential
 equation,
since $m_0(0)-\Phi(t)=m_0(t)$ is a continuous function of $t$, thus $v_k(t)$ are
 differentiable.

We have shown that the solution of
\eqref{crit_smol_frozen_integral_eq} has infinite first moment
after the gelation time:
 $\frac{1}{w^*(t)}=m_1(t)=+\infty$ for all $t \geq \gt$.

\begin{definition}\label{phiinf_phisup_def}
Let $E(0,w)$ denote the critical core of $\conf{v}(0)$ (see
Definition \ref{def_G_F_E}).

 For $\frac{1}{m_1(0)} \leq w_1\leq w_2$
define \[\phiinf(w_1,w_2):=\min_{w_1 \leq w \leq w_2}E(0,w) \quad
\text{ and } \quad \phisup(w_1,w_2):=\max_{w_1 \leq w \leq
w_2}E(0,w).\]
\[\phisup:=\phisup(\frac{1}{m_1(0)},+\infty), \quad
\phiinf(w):=\phiinf(\frac{1}{m_1(0)},w)\]
\end{definition}

\begin{lemma}\label{lemma_upper_lower_bound_on_E}
If $w \geq \frac{1}{m_1(0)}$ then the inequalities \be
\label{upper_lower_bound_on_E} \frac{m_1(0)}{m_2(0)}\frac{1}{w^2}
\leq E(0,w) \leq \frac{1}{w^2} \ee hold. Thus $\phisup\leq
m_1(0)^2$ and $\phiinf(w)\geq\frac{m_1(0)}{m_2(0)}\frac{1}{w^2}$.

 For all $w \geq
\frac{1}{m_1(0)}$ we have \be \label{Lipschitz}
\abs{\deri{E}{w}(0,w)} \leq 4 m_2(0)^2 m_3(0) =:D \ee which
implies \be \label{lipschitz_phisup_phiinf}
\phisup(w_1,w_2)-\phiinf(w_1,w_2) \leq D\cdot(w_2-w_1) \ee

\end{lemma}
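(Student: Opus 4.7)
The plan is to parametrize $w \geq 1/m_1(0)$ by $x \geq 0$ through $w = -1/U'(0,x)$, so that by \eqref{F_U_E_U_formulas} we have $E(0,w) = (-U'(0,x))^3/U''(0,x)$. Since $U(0,\cdot)$ and $V(0,\cdot)$ differ by a constant, it is convenient to write
\[
a(x) := \sum_{k} k v_k(0) e^{-kx}, \quad b(x) := \sum_{k} k^2 v_k(0) e^{-kx}, \quad c(x) := \sum_{k} k^3 v_k(0) e^{-kx},
\]
each positive and analytic on $\R$ (since $\conf{v}(0) \in \myNz$), with $a(0)=m_1(0)$, $b(0)=m_2(0)$, $c(0)=m_3(0)$. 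Then $E(0,w) = a(x)^3/b(x)$ and $w = 1/a(x)$, and as $x$ ranges over $[0,\infty)$, $w$ ranges over $[1/m_1(0),\infty)$.

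For \eqref{upper_lower_bound_on_E} the upper bound is immediate: $k \leq k^2$ gives $a \leq b$ termwise, so $E(0,w) = a^2\cdot(a/b) \leq a^2 = 1/w^2$. For the lower bound I would introduce the size-biased probability measure $\mu_x(k) := k v_k(0) e^{-kx}/a(x)$. A direct computation shows that its mean $\sum_k k\,\mu_x(k) = b/a$ satisfies $\frac{d}{dx} \sum_k k\,\mu_x(k) = -\mathrm{Var}_{\mu_x}(k) \leq 0$, so $b/a$ is non-increasing on $[0,\infty)$ and hence bounded above by $m_2(0)/m_1(0)$. This gives $E(0,w) = (1/w^2)(a/b) \geq (m_1(0)/m_2(0))(1/w^2)$. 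Taking sup over $w \geq 1/m_1(0)$ in the first bound and inf over $w' \in [1/m_1(0),w]$ in the second yields $\phisup \leq m_1(0)^2$ and $\phiinf(w) \geq (m_1(0)/m_2(0))/w^2$.

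For the Lipschitz estimate \eqref{Lipschitz}, I would differentiate $E = a^3/b$ in $w = 1/a$ via the chain rule. Using $a' = -b$, $b' = -c$ one computes $dw/dx = b/a^2$ and $(a^3/b)' = -3a^2 + a^3 c/b^2$, hence
\[
\deri{E}{w}(0,w) \;=\; -\frac{3a^4}{b} + \frac{a^5 c}{b^3}.
\]
The uniform estimates $a \leq m_1(0)$, $c \leq m_3(0)$, $a/b \leq 1$ (all already established for $x \geq 0$), together with $m_1(0) \leq m_2(0) \leq m_3(0)$ (from $k \leq k^2 \leq k^3$), give $a^4/b \leq a^3 \leq m_1(0)^3 \leq m_2(0)^2 m_3(0)$ and $a^5 c/b^3 = a^2 (a/b)^3 c \leq m_1(0)^2 m_3(0) \leq m_2(0)^2 m_3(0)$; summing yields $\left|\deri{E}{w}(0,w)\right| \leq 4 m_2(0)^2 m_3(0) =: D$. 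The inequality \eqref{lipschitz_phisup_phiinf} then follows by the mean value theorem applied to $E(0,\cdot)$ on $[w_1,w_2]$.

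The only step that is not pure bookkeeping is the monotonicity argument for $b/a$: one must recognize it as the mean of a size-biased measure and identify its $x$-derivative with $-\mathrm{Var}_{\mu_x}(k)$. Everything else reduces to the elementary inequalities $a \leq b$, $m_i(0) \leq m_{i+1}(0)$, and a chain-rule calculation.
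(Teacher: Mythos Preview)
Your proof is correct and follows essentially the same route as the paper: both parametrize $w$ via $x\mapsto -1/U'(0,x)$, use $k\le k^2$ for the upper bound, and derive the Lipschitz estimate from the same chain-rule formula $\partial_w E = a^5c/b^3 - 3a^4/b$ together with $a/b\le 1$. Your size-biased variance argument for the monotonicity of $b/a$ is exactly the paper's observation that $\log(-U')$ is convex, rephrased probabilistically (both reduce to $ac\ge b^2$).
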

\begin{remark}\label{monodisperse_E_remark}
 If $m_1(0)=m_2(0)$ then the
upper and lower bounds in \eqref{upper_lower_bound_on_E} coincide.
This can only happen if $v_k(0)=m_1(0) \cdot \ind\lbrack k=1
\rbrack$, this is the case known as the monodisperse initial
condition (the initial graph has no edges).
\end{remark}
\begin{proof}
Let $U(x):=U(0,x)$. Recalling \eqref{F_U_E_U_formulas}  $E
\left(0, -\frac{1}{ U'(x)}\right)=\frac{(-U'(x))^3}{ U''(x)}$
holds. The upper bound of \eqref{upper_lower_bound_on_E} follows
from $-U'(x) \leq U''(x)$, and $-U'(x) \frac{m_2(0)}{m_1(0)} \geq
U''(x)$ holds because $\log \left( -U'(x) \right)$ is a convex
function, thus $\frac{U''(x)}{U'(x)} \geq
\frac{U''(0)}{U'(0)}=\frac{m_2(0)}{-m_1(0)}$. The bound on the
Lipschitz constant \eqref{Lipschitz} follows from
\[\abs{\deri{E}{w}\left(0, -\frac{1}{
U'}\right)}=\abs{ \frac{(U')^5
U'''}{(U'')^3}-3\frac{(U')^4}{U''}}\leq \abs{(U')^2
U'''}+3\abs{(U')^3} \leq 4m_2(0)^2 m_3(0) \]
\end{proof}

Now we turn our attention to the subcritical equation
\eqref{subcrit_smol_frozen_integal_eq}. We assume $\lambda(t)>0$
for all $t$. If we substitute $x=0$ into the differential equation
(\ref{inhom_subcrit_U}) and assume
$\abs{\deri{U}{x}(t,0)}<+\infty$ then (formally) we get
\[\dot{\Phi}(t)= \varphi(t)=-\deri{U}{x}(t,0) \cdot
\lambda(t)=m_1(t)\lambda(t)=\frac{\lambda(t)}{w^*(t)},\]


\begin{definition} If $\conf{v}(0) \in \myNz$ and
$\lambda(t)$ is a positive continuous function then the
subcritical control differential equation for $w^*(t)$ is \be
\label{subcritical_control_ODE} \dot{w}^*(t)=
\frac{\lambda(t)}{w^*(t)E(0,t+w^*(t))}-1 \ee with initial
condition $w^*(0)=\frac{1}{m_1(0)}=\gt$.
\end{definition}
\begin{lemma}\label{subcritical_existence_lemma}
The subcritical control differential equation is well-posed and
the function \[\varphi(t):=\frac{\lambda(t)}{w^*(t)}\]
(where $w^*(t)$ is the solution of \eqref{subcritical_control_ODE} with  $w^*(0)=\frac{1}{m_1(0)}$)
 satisfies the
criteria of Lemma \ref{existence_lemma_1}, which implies the
existence of solutions to \eqref{subcrit_smol_frozen_integal_eq}.
\end{lemma}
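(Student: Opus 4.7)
The plan is to establish global existence and uniqueness of the ODE \eqref{subcritical_control_ODE} on every interval $[0,T]$, and then to verify the two conditions of Lemma \ref{existence_lemma_1} by differentiating the identities that characterize the minimizer of $G(t,\cdot)$.

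For well-posedness on $[0,T]$, set $\lambda_{\min}:=\min_{[0,T]}\lambda>0$ (positive by continuity and positivity of $\lambda$). Lemma \ref{lemma_upper_lower_bound_on_E} gives $0<E(0,w)\leq\phisup$ with $E(0,\cdot)$ Lipschitz on $[1/m_1(0),\infty)$, so the right-hand side of \eqref{subcritical_control_ODE} is locally Lipschitz in $w^*$ on the open region $\{w^*>0,\,t+w^*\geq 1/m_1(0)\}$, giving a local $C^1$ solution by Picard--Lindel\"of. Since the first term on the RHS is nonnegative, $\dot{w}^*\geq -1$, so $t\mapsto t+w^*(t)$ is nondecreasing and remains $\geq 1/m_1(0)$. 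Set $\delta_0:=\lambda_{\min}/\phisup$; whenever $0<w^*(t)\leq\delta_0$ we have
\[
w^*(t)\,E(0,t+w^*(t))\leq\delta_0\,\phisup=\lambda_{\min}\leq\lambda(t),
\]
hence $\dot{w}^*(t)\geq 0$, so $w^*$ is nondecreasing on any interval where it stays $\leq\delta_0$. Consequently $w^*(t)\geq\min(1/m_1(0),\delta_0)>0$ on $[0,T]$, and the lower bound $E(0,w)\geq (m_1(0)/m_2(0))w^{-2}$ combined with Gr\"onwall rules out finite-time blow-up, so the solution extends to all of $[0,T]$.

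Now set $\varphi(t):=\lambda(t)/w^*(t)$, $\Phi(t):=\int_0^t \varphi(s)\,ds$, and let $G(t,w)$ be given by \eqref{G_solution_inhom_subcrit}. Legendre duality yields $G'(t,w)=F(t,w)=F(0,w+t)-\Phi(t)$ by \eqref{F_solution_inhom_subcrit}, so the condition $G'(t,w^*(t))=0$ is equivalent to the identity $H(t):=F(0,t+w^*(t))-\Phi(t)\equiv 0$. We have $H(0)=F(0,1/m_1(0))=0$ (since $1/m_1(0)$ is the argmin of $G(0,\cdot)$), and
\[
\dot{H}(t)=E(0,t+w^*(t))\bigl(1+\dot{w}^*(t)\bigr)-\frac{\lambda(t)}{w^*(t)}=0
\]
is a rewriting of \eqref{subcritical_control_ODE}; thus $H\equiv 0$. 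By strict convexity of $G(t,\cdot)$ (noted after \eqref{E_solution_translated}), $w^*(t)$ is the unique minimizer, and $w^*(t)\geq 0$ was shown in the first step. For the minimum value, $\frac{d}{dt}G(t,w^*(t))=\partial_t G(t,w^*(t))$ since $G'(t,w^*(t))=0$, and from \eqref{G_solution_inhom_subcrit},
\[
\partial_t G(t,w^*(t))=F(0,t+w^*(t))-w^*(t)\varphi(t)-\Phi(t)+\lambda(t)=\Phi(t)-\lambda(t)-\Phi(t)+\lambda(t)=0,
\]
using $H\equiv 0$ and $w^*\varphi=\lambda$. Together with $G(0,w^*(0))=0$ from \eqref{G_F_w_star_using_X_zero}, this gives $\min_w G(t,w)=G(t,w^*(t))\equiv 0$, so both hypotheses of Lemma \ref{existence_lemma_1} are satisfied.

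The main obstacle is preventing $w^*\to 0$, where the ODE becomes singular; this is handled by the uniform bound $E(0,\cdot)\leq\phisup$ from Lemma \ref{lemma_upper_lower_bound_on_E}, which makes the flow strictly repelling from zero and traps $w^*$ above a positive threshold determined by $\lambda_{\min}$ and $\phisup$.
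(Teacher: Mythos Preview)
Your proof is correct and follows essentially the same approach as the paper: a forbidden-region argument for the lower bound on $w^*$, the lower estimate on $E(0,\cdot)$ for the upper bound, and then verifying $F(t,w^*(t))\equiv 0$ and $G(t,w^*(t))\equiv 0$ by differentiating along $t\mapsto w^*(t)$. Your use of the envelope identity $\tfrac{d}{dt}G(t,w^*(t))=\partial_t G(t,w^*(t))$ is a slightly cleaner packaging of the same computation the paper does.

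One small point: the phrase ``combined with Gr\"onwall rules out finite-time blow-up'' is a bit compressed, since the differential inequality you get from $E(0,w)\geq (m_1(0)/m_2(0))w^{-2}$ is a priori quadratic in $w^*$. What makes it work is that once $w^*(t)\geq T$ (say), one has $(t+w^*)^2/w^*\leq 4w^*$, so the inequality becomes linear and Gr\"onwall applies; the paper achieves the same linearization by passing to a monotone majorant $y(t)\geq\gt$. Either way the argument goes through, but it is worth making that linearization step explicit.
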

\begin{proof}
We prove  the statement of the lemma on $\lbrack 0, T \rbrack$.
The Picard-Lindel\"of theorem and the Lipschitz-continouity property
 (\ref{Lipschitz}) gurantee the existence and uniqueness
of the solution of (\ref{subcritical_control_ODE}) before the
graph of the solution exits \be \label{box}\{ (t,w^*): 0 \leq t
\leq T,\quad w^*_{min} \leq w^* \leq w^*_{max},\quad w^*+t \geq
w^*(0) \} \ee for some $0<w^*_{min}<w^*_{max}<+\infty$.

 Let
$\lambda_{inf}:=\inf_{0\leq t\leq T} \lambda(t)$,
$\lambda_{sup}:=\sup_{0\leq t\leq T} \lambda(t)$. From
(\ref{subcritical_control_ODE}) and a ``forbidden region''-type argument
 we get
that $w^*(t)+t \geq w^*(0)$ and $w^*(t) \geq \min\{
\frac{\lambda_{inf}}{\phisup}, w^*(0) \}$, since \[w^*(t)> 0
\implies \frac{d}{dt}( w^*(t)+t) \geq 0\]\[
 w^*(t)+t \geq w^*(0)
\implies E(0,t+w^*(t)) \leq \phisup,\] thus $w^*(t) <
\frac{\lambda_{inf}}{\phisup} \implies \dot{w}^*(t) >0$.

Now we
prove that $w^*(t)$ cannot grow too fast
 using the lower bound of (\ref{upper_lower_bound_on_E}). $w^*(t) \leq y(t)$ where $y(0)=w^*(0)=\gt$ and
\[\dot{y}(t)=\lambda_{sup}\frac{m_2(0)}{m_1(0)}\frac{(y(t)+t)^2}{y(t)}\leq
\lambda_{sup}\frac{m_2(0)}{m_1(0)}\left(\frac{\gt+t}{\gt}\right)\cdot (y(t)+t) \]
since $y(t)$ is increasing.
 Thus $\dot{y}(t) \leq a\cdot  y(t)+b$ for some $a$ and $b$
 depending only on the initial data, the function $\lambda(t)$ and
 $T$. Thus
 \[w^*(t)\leq w^*(0) e^{a t} +\frac{b}{a} \cdot (e^{at} -1).\]

 Now we can see that the graph of the solution of
(\ref{subcritical_control_ODE}) indeed doesn't exit (\ref{box})
until $t=T$  if we define
 \be \label{w_min_w_max}
   w^*_{min}= \min\{ \frac{\lambda_{inf}}{\phisup},
\gt \} \quad \text{ and }\quad
w^*_{max}:=(\gt+\frac{b}{a})e^{aT}\ee Now we prove that
$\varphi(t):=\frac{\lambda(t)}{w^*(t)}$ satisfies the criteria of
Lemma \ref{existence_lemma_1}. by showing that  \[G(t,w^*(t))\equiv
0 \quad \text{ and } \quad  F(t,w^*(t)) \equiv 0.\] This holds for $t=0$, so it
suffices to check $\frac{d}{dt} G(t,w^*(t))\equiv 0$ and
$\frac{d}{dt} F(t,w^*(t))\equiv 0$. Using
(\ref{F_solution_inhom_subcrit})
\begin{multline*}
\frac{d}{dt} F(t,w^*(t))= E(0,t+w^*(t))\cdot \left(
1+\frac{\lambda(t)}{w^*(t) E(0,t+w^*(t))} -1\right)
-\frac{\lambda(t)}{w^*(t)} =0
\end{multline*} If we combine $F(t,w^*(t)) \equiv 0$ with (\ref{F_solution_inhom_subcrit}) we get \be \label{Phi_and_F}
F(0,t+w^*(t))=\Phi(t) \ee It is straightforward to verify
$\frac{d}{dt} G(t,w^*(t))\equiv 0$ by using
(\ref{G_solution_inhom_subcrit}) and (\ref{Phi_and_F}).
\end{proof}
This completes the proof of the well-posedness of
(\ref{subcrit_smol_frozen_integal_eq}).

\section{Proof of Theorem
 \ref{thm_convergence_of_process_to_eqn}.}
\label{proof_of_convergence}
We consider the sequence $\Prb_N$ of probability measures on the compact
 space $\rbspace_{\conf{w}} \lbrack 0, T \rbrack$.
From Prokhorov's theorem it follows that any subsequence of the
measures $\Prb_N$ contains a sub-subsequence that converges weakly
to a limiting measure on $\rbspace_{\conf{w}} \lbrack 0, T
\rbrack$.

\begin{lemma}\label{lemma_weaklimit}
Any weak limit point of the measures $\Prb_N$ is concentrated on the
set of solutions of the general frozen percolation equation
\eqref{general_frozen_eqs}.
\begin{itemize}
\item If $\mu(N)\equiv 1$, then the $\lambda(t)$ rate function of
\eqref{general_frozen_eqs} is equal to the $\lambda(t)$ of
\eqref{lightning_rate}.
\item If $\mu(N) \ll 1$, then the $\lambda(t)$ rate function of
\eqref{general_frozen_eqs} is equal to $0$.
\end{itemize}

\end{lemma}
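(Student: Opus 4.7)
The plan is a standard martingale-plus-compactness argument for hydrodynamic limits. Compactness of $\rbspace_{\conf{w}}\lbrack 0,T\rbrack$ already provides tightness of $\Prb_N$, so the task is to identify the support of any weak limit. First, I would apply Dynkin's formula to the coordinates $\cV_k^N(t)$. Each transition of the Markov chain changes $\cV_k^N$ by an explicit amount: a coagulation $(a,b)\to a{+}b$ with $a{+}b=k$ increases $\cV_k^N$ by $k$, a coagulation in which exactly one summand equals $k$ decreases $\cV_k^N$ by $k$, a $(k,k)$-coagulation decreases it by $2k$, and a lightning on a $k$-component decreases it by $k$. Summing these against the rates in \eqref{coag_rate} and \eqref{lightning_rate}, dividing by $N$, and collecting the $O(1/N)$ corrections coming from $\cV_k(\cV_k-k)$ versus $\cV_k^2$ yields the semimartingale decomposition
\begin{equation*}
v_k^N(t) = v_k^N(0) + \int_0^t\!\Bigl[\tfrac{k}{2}\sum_{l=1}^{k-1} v_l^N(s)v_{k-l}^N(s) - k\,v_k^N(s)\bigl(m_0^N(s) + \lambda(s)\mu(N)\bigr)\Bigr]\,ds + R_k^N(t) + M_k^N(t),
\end{equation*}
where $M_k^N$ is a martingale and $R_k^N(t)=O(k^2/N)$ uniformly on $[0,T]$.

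Next I would bound the martingale. Each jump alters $v_k^N$ by at most $k/N$; the total jump rate at state $\conf{\cV}$ is $O(N)$ for coagulations and $O(N\mu(N))\leq O(N)$ for lightnings. Hence the predictable quadratic variation of $M_k^N(t)$ is $O(k^2/N)$, and by Doob's inequality $\sup_{t\leq T}\abs{M_k^N(t)}\to 0$ in probability. By the Skorokhod representation theorem, along any subsequence of $\Prb_N$ converging to a limit $\Prb$, I may assume a.s.\ convergence $w_k^N(t)\to w_k(t)$ and $\Phi^N(t)\to\Phi(t)$ at continuity points of the limit, and in particular $v_k^N(\cdot)=w_k^N(\cdot)-w_{k-1}^N(\cdot)$ converges pointwise a.e. The quadratic terms pass to the limit by dominated convergence, using $v_l^N(s)v_{k-l}^N(s)\leq m_0(0)^2$. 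Rewriting $m_0^N(s)=m_0(0)-\Phi^N(s)-\theta^N(s)$ and using $\theta^N(s)\to \theta(s)$ (at continuity points), the limit relation
\begin{equation*}
v_k(t) = v_k(0) + \int_0^t\!\Bigl[\tfrac{k}{2}\sum_{l=1}^{k-1} v_l(s)v_{k-l}(s) - k\,v_k(s)\bigl((m_0(0)-\Phi(s))+\lambda_{\mathrm{eff}}(s)\bigr)\Bigr]\,ds
\end{equation*}
holds a.s., where $\lambda_{\mathrm{eff}}(s)=\lim_N \lambda(s)\mu(N)$ equals $\lambda(s)$ when $\mu(N)\equiv 1$ and $0$ when $\mu(N)\ll 1$. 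The structural requirements of Definition \ref{definition_general_frozen_eqs}, namely $\Phi$ left-continuous, increasing, with $\Phi(0)=0$ and $\theta(t)\geq 0$, are inherited directly from the prelimit process, where $\Phi^N$ is increasing with values in $[0,m_0(0)]$ and mass conservation forces $\theta^N\geq 0$.

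The main obstacle is the $\mu(N)\ll 1$ case. Per fixed $k$, the infinitesimal lightning contribution $k\lambda(s)\mu(N)v_k^N(s)$ to the drift vanishes, which is what makes $\lambda_{\mathrm{eff}}\equiv 0$ acceptable in the limiting equation; however one must still be sure that lightnings produce a nontrivial contribution to $\Phi^N(t)=m_0^N(0)-m_0^N(t)$, since it is precisely the burning of giant clusters (of macroscopic size, struck at the non-vanishing total rate $\lambda(s)\mu(N)\cdot N\cdot \theta^N(s)$) that feeds $\Phi$. This requires a separate tightness and continuity argument for $\Phi^N$, and the verification that the giant mass $\theta^N$ and the small-cluster mass $w_k^N$ separate correctly in the limit $k\to\infty$; i.e., that the two $\lim$'s appearing before the definition of $\theta$ differ by exactly the giant mass being consumed by lightnings. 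Fortunately, this is not required for the present lemma: here I only need the integral identity for each fixed $k$, with whatever $\Phi$ the process produces in the limit; the identification of $\Phi$ (through \eqref{subcrit_Phi_def} for $\mu(N)\equiv 1$, and through the critical mechanism for $\mu(N)\ll 1$) is the content of the uniqueness theorem and of the remaining parts of Theorem \ref{thm_convergence_of_process_to_eqn}.
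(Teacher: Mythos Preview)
Your proposal is correct and follows essentially the same route as the paper: compute the generator $Lv_k^N$ from the rates \eqref{coag_rate}--\eqref{lightning_rate}, bound the martingale's quadratic variation by $O(k^2/N)$, apply Doob's inequality, and pass to the limit. The paper is terser about the limiting step (it just says ``rewrite in terms of $w_k^N$''), whereas you spell out Skorokhod representation and dominated convergence; your remark that for finite $N$ one has $\theta^N\equiv 0$ so $m_0^N(s)=m_0(0)-\Phi^N(s)$ exactly would streamline your argument, and your final paragraph correctly identifies that the separation of $\Phi$ from $\theta$ is the business of Lemma~\ref{no_giant_lemma}, not of this one.
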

\begin{proof}
From  \eqref{coag_rate} and \eqref{lightning_rate}  it follows that
\begin{multline} \label{frozen_infinitesimal_expected_change_v}
L v_k^N(t):=\lim_{dt \to 0}
\condexpect{v_k^N(t+dt)-v_k^N(t)}{\cF_t}=\\
 \frac{1}{N}
\frac{\cV_k(\cV_k-k)}{2}\left(-2 \frac{k}{N}\right)+
\left(\sum_{l \neq
k} \frac{1}{N} \cdot \cV_k \cV_l  \right)\left(-\frac{k}{N}\right) +\\
\left(\sum_{l=1}^{\lfloor \frac{k-1}{2}\rfloor} \frac{1}{N}
\cV_l \cV_{k-l}
 + \ind \lbrack 2|k \rbrack
\frac{1}{N} \frac{(\cV_{\frac{k}{2}}-\frac{k}{2})
\cV_{\frac{k}{2}}}{2} \right)\frac{k}{N}
-\lambda(t)\cdot \mu(N) \cV_k \frac{k}{N}=\\
-k \cdot((m_0(0)-\Phi^N(t))+\lambda(t)\mu(N)) \cdot \rv_k^N
+\frac{k}{2} \sum_{l=1}^{k-1}\rv_l^N \rv_{k-l}^N
+\frac{1}{N}\left(k^2 \rv_k^N - \ind \lbrack 2 | k \rbrack \cdot
\frac{k^2}{4} \rv_{\frac{k}{2}}^N \right)
\end{multline}
$M(t)=v_k^N(t)-v_k^N(0) -\int_0^t L v_k^N(s)ds$ is a martingale and
\begin{multline*}
L M^2(t):= \lim_{dt \to 0} \condexpect{M^2(t+dt)-M^2(t)}{\cF_t}=
\lim_{dt \to 0} \condexpect{(v_k^N(t+dt)-v_k^N(t))^2}{\cF_t}\leq \\
\left(2\frac{k}{N}\right)^2 \cdot \left( \binom{ \lfloor m_0(0) N \rfloor }{2}
 \frac{1}{N}
+ \lfloor m_0(0) N \rfloor \lambda(N) \right) =\Ordo\left( \frac{k^2}{N} \right)
\end{multline*}
Thus $\expect{M(T)^2}=\expect{\int_0^t L M^2(s)ds}=
 \Ordo\left(\frac{1}{N}\right)$ if we fix $k$. It follows
from Doob's maximal inequality that for all $\varepsilon>0$,
$k\geq 1$ and $T<+\infty$ we have \be
\label{convergence_in_prob_to_general_frozen} \lim_{N \to \infty}
\prob{ \sup_{0\leq t \leq T} \left| v_k^N(t) -v_k^N(0) - \int_0^t
L v_k^N(s)ds\right| > \varepsilon}=0 \ee If we rewrite this
equation in terms of the functions
$\left(w_k^N(\cdot)\right)_{k=1}^{\infty}$ the claim of the lemma
follows.
\end{proof}

\begin{lemma}
\label{no_giant_lemma} If $\frac{1}{N} \ll \mu(N)$,
$0<\lambda_{inf} \leq \lambda(t)$ and $\conf{v}(0) \in \myNz$,
 then for any weak limit point $\Prb$  of the sequence
of probability measures $\Prb_{N}$ on
 $\rbspace_{\conf{w}} \lbrack 0,
T \rbrack$ we have
\begin{equation}
\label{no_giant_formula} \Prb \left(\theta(t) \equiv 0 \right)=1
\end{equation}
\end{lemma}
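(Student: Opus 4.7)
My plan is to exploit the conservation of burnt mass: because $\Phi^N(T)\le m_0(0)$ is bounded while each lightning strike on a component of size $k$ removes mass $k/N$ at rate $\lambda(t)\mu(N)\cV_k$, the hypothesis $\mu(N)\gg 1/N$ forces macroscopic components to be extinguished much faster than they can regrow, which in the limit should mean that the ``escape'' mass $\theta(t)$ must vanish.

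First I would derive the a priori estimate. Summing the generator expression \eqref{frozen_infinitesimal_expected_change_v} over $k$ causes the coagulation terms to telescope (coagulation conserves mass), leaving
\[
\E[\Phi^N(T)] \;=\; \int_0^T \lambda(s)\,\mu(N)\,\E[m_1^N(s)]\,ds.
\]
Combined with $\Phi^N(T)\le m_0(0)$ this yields
\[
\E\Bigl[\int_0^T m_1^N(s)\,ds\Bigr] \;\le\; \frac{m_0(0)}{\lambda_{inf}\,\mu(N)},
\]
and the obvious bound $m_1^N(s)\ge K\cdot M_K^N(s)$ with $M_K^N(s):=m_0^N(s)-w_K^N(s)=\sum_{j>K}v_j^N(s)$ then gives
\[
\E\Bigl[\int_0^T M_K^N(s)\,ds\Bigr] \;\le\; \frac{C}{\mu(N)\,K}
\]
for a constant $C$ depending only on the initial data and $\lambda_{inf}$.

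Second I would transfer this to the weak limit. In the subcritical case $\mu(N)\equiv 1$ the bound is already $C/K$ uniformly in $N$; by the portmanteau theorem applied at common continuity points of $w_K$ and $\Phi$, where $M_K^N(t)\to M_K(t)=\theta(t)+\sum_{j>K}v_j(t)$, one obtains $\E_{\Prb}\bigl[\int_0^T M_K(s)\,ds\bigr]\le C/K$, and since $M_K\downarrow\theta$ as $K\to\infty$, monotone convergence forces $\E_{\Prb}\bigl[\int_0^T\theta(s)\,ds\bigr]=0$, so $\theta(s)=0$ for Lebesgue-a.e.\ $s$, $\Prb$-almost surely. In the critical case $1/N\ll\mu(N)\ll 1$ the bound degenerates for fixed $K$, so I would choose a diagonal sequence $K(N)\to\infty$ with $\mu(N)K(N)\to\infty$ (available precisely because $\mu(N)\gg 1/N$); then $\int_0^T M_{K(N)}^N \to 0$ in $L^1(\Prb_N)$. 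To translate this diagonal control to the limit I would combine it with the direct lifetime estimate: if $|\cC_{max}^N(t)|\ge \epsilon N$ then the largest component is burnt at rate $\ge\lambda_{inf}\mu(N)\epsilon N$ while at most $m_0(0)/\epsilon$ such macroscopic burnings can occur, so the Lebesgue measure of the time set where a component of mass at least $\epsilon$ exists has expectation $\Ordo(1/(\lambda_{inf}\mu(N)\epsilon^2 N))\to 0$. Together with the monotonicity $w_\ell\nearrow m_0$ in $\ell$ and the convergence $w_k^N\to w_k$ for every fixed $k$, these two inputs pin $\theta(s)$ to $0$ for a.e.\ $s$, $\Prb$-a.s.

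Finally I would upgrade ``a.e.\ $s$'' to ``every $s$''. Once $\theta\equiv 0$ a.e., $\Phi(s)=m_0(0)-m_0(s)$ a.e., and the general frozen equation \eqref{general_frozen_eqs} (which holds $\Prb$-a.s.\ by Lemma \ref{lemma_weaklimit}) reduces to either \eqref{subcrit_smol_frozen_integal_eq} or \eqref{crit_smol_frozen_integral_eq}; by Lemma \ref{subcrit_integral_eq_uniqueness} (together with Lemma \ref{crit_eq_existence}) the solution is unique and its $\theta$ vanishes identically.

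The main obstacle is the critical regime. The clean Fatou argument for $\mu\equiv 1$ fails because the bound $\E[\int M_K^N]\le C/(\mu(N)K)$ is useless for any fixed $K$ when $\mu(N)\to 0$, and the diagonal argument requires identifying $\limsup_N M_{K(N)}^N(s)\le \theta(s)$ almost surely — an interchange between the weak-convergence limit $N\to\infty$ (which through the topology on $\rbspace_{\conf{w}}$ only controls $w_k$ at fixed $k$) and the diverging diagonal index $K(N)$. The lifetime estimate for macroscopic components, the monotonicity of $w_k$ in $k$, and possibly passing to a subsequence along which the convergence is almost sure are what make this interchange work.
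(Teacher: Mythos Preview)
Your conservation argument is a genuinely different route from the paper's, and for $\mu(N)\equiv 1$ it is correct and much shorter than what the paper does: the bound $\E_N\bigl[\int_0^T M_K^N\,ds\bigr]\le C/K$ is uniform in $N$, the functional $\int_0^T(m_0(0)-\Phi-w_K)\,ds$ is bounded and continuous on $\rbspace_{\conf{w}}[0,T]$, so the bound passes to the weak limit, $M_K\ge\theta$ gives $\E_\Prb\bigl[\int_0^T\theta\,ds\bigr]\le C/K\to 0$, and the upgrade to $\theta\equiv 0$ is immediate since $\Phi$ and $m_0(0)-m_0$ are both increasing, left-continuous, and agree Lebesgue-a.e. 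The paper instead argues by contradiction uniformly over both regimes: if $\theta\not\equiv 0$ with positive $\Prb$-probability, the Lipschitz bound \eqref{giant_lipchitz} on $\theta$ (derived from the limit equation) yields a deterministic interval on which $\theta>\rbeps_2$ with probability $>\rbeps_3$; weak convergence then transfers this to $\sum_{k>K}v_k^N>\rbeps_2$ for \emph{every fixed} $K$ in the finite-$N$ models (Lemma~\ref{lemma:disjoint_intervals_big_mass}); and Lemmas~\ref{lemma:almost_giant_or_lot_burnt}--\ref{lemma:if_almost_giant_then_big_fire_soon} show this forces $\E_N[\Phi^N(T)]>m_0(0)$, a contradiction. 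Your argument buys brevity in the subcritical case; the paper's buys a single proof that does not care whether $\mu(N)\to 0$.

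In the critical regime $\frac{1}{N}\ll\mu(N)\ll 1$ your diagonal argument has a real gap, and it is precisely the interchange you flag at the end. Along a Skorokhod coupling, for a.e.\ $s$ and every fixed $K$ one has $w_{K(N)}^N(s)\ge w_K^N(s)\to w_K(s)$, hence $\liminf_N w_{K(N)}^N(s)\ge\sup_K w_K(s)=m_0(s)$ and therefore
\[
\limsup_N M_{K(N)}^N(s)\;\le\; m_0(0)-\Phi(s)-m_0(s)\;=\;\theta(s).
\]
This is the \emph{reverse} of what Fatou would need to conclude $\int_0^T\theta\le\liminf_N\int_0^T M_{K(N)}^N=0$. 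Your lifetime estimate suffers the same defect: it is a correct finite-$N$ statement about $M_{\epsilon N}^N$, not about $\theta$, and nothing you have written rules out the scenario in which, for each $N$, a component of mass $\approx\theta(s)$ persists only for time of order $1/(\mu(N)N)$ before burning --- all your finite-$N$ time-integrals vanish, yet in the limit the mass has escaped to a nonzero $\theta(s)$. The paper closes this gap by reversing the direction of transfer: it uses the Lipschitz property of $\theta$ in the \emph{limit} to push the statement $M_K>\rbeps_2$ \emph{down} to finite $N$ for fixed $K$, where it lives entirely in the topology of $\rbspace_{\conf{w}}[0,T]$ and is insensitive to $\mu(N)\to 0$.
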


The subcritical and critical
 parts of
Theorem \ref{thm_convergence_of_process_to_eqn}. follow from Lemma
\ref{lemma_weaklimit}. and Lemma \ref{no_giant_lemma}.:  any weak
limit point $\Prb$ of the sequence $\Prb_N$ is concentrated on the set
of frozen percolation evolutions satisfying
\eqref{general_frozen_eqs} \& \eqref{subcrit_Phi_def}. When
$\mu(N) \equiv 1$, $\Prb$ is concentrated on the unique solution of
\eqref{subcrit_smol_frozen_integal_eq}, when $\frac{1}{N} \ll
\mu(N) \ll 1 $ then $\Prb$ is concentrated on the solution of
\eqref{crit_smol_frozen_integral_eq}.

In the rest of this section we discuss the proof of Lemma \ref{no_giant_lemma}.

\begin{lemma}
We consider a solution of the general frozen percolation equation
\eqref{general_frozen_eqs} with initial condition $\conf{v}(0) \in
\myNz$. If $\, \lambda(t)\equiv 0$ or $0<\lambda_{inf} \leq
\lambda(t) \leq \lambda_{sup} <+\infty$ then there is a constant
$C^*$  such that for all $ t_1 \leq t_2 $ we have
\begin{equation}
\label{giant_lipchitz} \theta(t_2)-\theta(t_1) \leq C^*
\cdot(t_2-t_1)
\end{equation}
\end{lemma}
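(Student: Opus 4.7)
The plan is first to reduce the claim to a Lipschitz bound on $m_0(\cdot)$, and then to exploit the implicit representation of $\theta$ coming from $X(t,-\theta(t))=0$. Since $m_0(t)+\Phi(t)+\theta(t)=m_0(0)$ and $\Phi$ is nondecreasing, I immediately obtain
\[\theta(t_2)-\theta(t_1)\ \leq\ m_0(t_1)-m_0(t_2),\]
so it suffices to show that $m_0$ is Lipschitz on $[0,T]$. Setting $z(t):=m_0(0)-m_0(t)=\theta(t)+\Phi(t)$ and combining \eqref{X_theta_zero} with the characteristic formula \eqref{X_solution_inhom_subcrit} produces the scalar implicit equation
\[X(0,-z(t))\ =\ t\,z(t)+\int_{0}^{t}\bigl(\lambda(s)-\Phi(s)\bigr)\,ds,\]
which will be my main tool.

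Since $\conf{v}(0)\in\myNz$, the function $X(0,\cdot)$ is real-analytic, strictly decreasing and strictly convex on $(-m_0(0),\infty)$, and by \eqref{X_derivative_solution} I have the key identity $|X'(0,-z(t))|-t=1/m_1(t)\geq 0$. Formal differentiation gives $\dot z(t)=m_1(t)(\theta(t)+\lambda(t))$, so the task reduces to bounding this product uniformly on $[0,T]$. In the case $\lambda\geq \lambda_{inf}>0$ I plan to adapt the forbidden-region argument from Lemma \ref{subcritical_existence_lemma}: that argument showed $1/m_1(t)$ stays bounded below by a positive constant depending only on the initial data, $T$, $\lambda_{inf}$ and $\lambda_{sup}$, and it relied only on the lower bound for $\lambda$ and on the shape of $E(0,\cdot)$, so the analogue goes through for the general frozen equation, yielding $\dot z\leq(m_0(0)+\lambda_{sup})\cdot\sup_{[0,T]}m_1(t)\leq C^*$. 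In the case $\lambda\equiv 0$ we have $\dot z=m_1\theta$, and strict convexity of $X(0,\cdot)$ combined with the implicit equation forces $m_1(t)\theta(t)$ to remain controlled by $1/\inf X''(0,\cdot)$ over the physically relevant compact subdomain.

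The main obstacle is the degenerate instant in the $\lambda\equiv 0$ regime where $\theta(t)$ and $1/m_1(t)$ vanish simultaneously (a gelation-type point), since the linearization of the implicit equation then degenerates and naive estimates only give a $\sqrt{\Delta t}$-type bound. I plan to handle this by a second-order Taylor expansion: writing $\Delta z:=z(t_2)-z(t_1)$ and some $\eta$ between $-z(t_2)$ and $-z(t_1)$,
\[X(0,-z(t_2))-X(0,-z(t_1))\ =\ |X'(0,-z(t_1))|\,\Delta z+\tfrac{1}{2}X''(0,\eta)(\Delta z)^2+O((\Delta z)^3),\]
and matching this with the right-hand side of the implicit equation (whose leading linear-in-$\Delta t$ coefficient is $\theta(t_1)+\lambda(t_1)$, which vanishes precisely at the degenerate instant), produces a quadratic inequality $b(\Delta z)^2-\Delta t\cdot\Delta z\leq C_1\Delta t$. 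Because analyticity of $X(0,\cdot)$ provides a uniform positive lower bound $b>0$ for $\tfrac{1}{2}X''(0,\cdot)$ on the relevant compact subdomain, the nonnegative solution of this quadratic satisfies $\Delta z\leq C^*\,\Delta t$, completing the Lipschitz estimate and hence the lemma.
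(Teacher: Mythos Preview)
Your reduction to a Lipschitz bound on $z(t)=m_0(0)-m_0(t)$ and the implicit relation $X(0,-z(t))=t\,z(t)+\int_0^t(\lambda-\Phi)$ are correct, and for $\lambda\geq\lambda_{inf}>0$ the forbidden-region bound on $m_1$ indeed goes through (the paper does exactly this via the evolution of $-V'$). The gap is in the case $\lambda\equiv0$.

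There the crux is precisely the uniform bound on $m_1(t)\theta(t)$, and you have not proved it. Your sentence ``strict convexity of $X(0,\cdot)$ combined with the implicit equation forces $m_1(t)\theta(t)$ to remain controlled by $1/\inf X''(0,\cdot)$'' is a restatement of the goal, not an argument; static convexity of $X(0,\cdot)$ alone cannot give it, since for fixed $t$ and $z$ the value $\Phi(t)\in[0,z]$ (hence $\theta(t)$) is not determined by the single implicit equation. Worse, the specific constant you name is wrong: for the Erd\H{o}s--R\'enyi case ($\Phi\equiv0$, monodisperse, $\phisup=1$) one checks from $1-\theta=e^{-t\theta}$ and $1/m_1=\frac{1}{1-\theta}-t$ that $\theta(t)m_1(t)\to 2$ as $t\downarrow\gt$, so the product is \emph{not} controlled by $\phisup$. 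The paper obtains the correct bound $C_2=\max\{m_1(0),2\phisup\}$ by a \emph{dynamic} forbidden-region argument for the quantity $U(t,x)V'(t,x)$, using its evolution equation; this is the missing idea.

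Your quadratic-inequality step inherits this gap. The inequality you derive, $b(\Delta z)^2-\Delta t\,\Delta z\le C_1\Delta t$ with $C_1=\theta(t_1)+\lambda_{sup}$, yields $\Delta z\le\Delta t/b$ only when $C_1=0$; for $C_1>0$ its positive root is $\frac{\Delta t+\sqrt{(\Delta t)^2+4bC_1\Delta t}}{2b}\sim\sqrt{C_1\Delta t/b}$, i.e.\ only a H\"older-$\tfrac12$ estimate. To upgrade to Lipschitz one must retain the term $(1/m_1(t_1))\Delta z$ on the left, and then solving the quadratic gives a linear bound precisely when $C_1/a=\theta(t_1)m_1(t_1)$ is uniformly bounded---which is exactly the unproved estimate. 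So as written the $\lambda\equiv0$ argument is circular.
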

\begin{proof}
First we prove that there exists a constant $C$ depending only on
the initial data $\conf{v}(0)$ and $\lambda_{inf}$ such that \be
\label{m_1_bound_for_general_frozen_eq}
 m_1(t)\leq C \ee

 If $V(t,x)=\sum_{k=1}^{\infty} v_k(t)e^{-kx}$ then by \eqref{general_frozen_eqs}
 we get
\begin{align}
\label{general_frozen_eq_V}
 \dot{V}(t,x)&=& V'(t,x) \cdot \left(
(m_0(0)-\Phi(t))+\lambda(t)-V(t,x)
\right)\\
\label{general_frozen_eq_derivative_x}
\dot{V}'(t,x)&=&V''(t,x)\left(m_0(0)-\Phi(t)-\lambda(t)-V(t,x)\right)-V'(t,x)^2
\end{align}
Substituting $V(t,x)-(m_0(0)-\Phi(t)) \leq 0$ and $\frac{
-V'(t,x)^3}{\phisup} \leq V''(t,x)$ into
\eqref{general_frozen_eq_derivative_x} we get
\[ \frac{d}{dt} \left(-V'(t,x)\right)\leq V'(t,x)^2 \cdot
\left(1-\frac{\lambda_{inf}}{\phisup} \left(-V'(t,x)\right)
\right)
\]
which implies $-V'(t,x)\leq \max\{
m_1(0),\frac{\phisup}{\lambda_{inf}} \}=:C$ for all $x>0$ and $t$
by a "forbidden region"-argument. Thus by letting $x \to 0_+$ we get
\eqref{m_1_bound_for_general_frozen_eq}.

Now we show that for some constant $C_2$ we have \be
\label{bound_on_U_V_prime}
 \left(
V(t,x)-(m_0(0)-\Phi(t) \right)V'(t,x) \leq C_2
 \ee
for all $x>0$. If $\lambda_{inf} \leq \lambda(t)$, then by
\eqref{m_1_bound_for_general_frozen_eq} and $-m_0(0) \leq
V(t,x)-(m_0(0)-\Phi(t)) \leq 0$ we get \eqref{bound_on_U_V_prime}
with $C_2=m_0(0)C$.

Denote by $U(t,x):=V(t,x)-(m_0(0)-\Phi(t))$.
 If $\lambda(t)\equiv 0$ then by
 \eqref{general_frozen_eq_V} and
\eqref{general_frozen_eq_derivative_x} we get
\begin{multline*}
 \frac{d}{dt}
\left( U(t,x) V'(t,x)\right)= -2 V'(t,x)^2 U(t,x)-U(t,x)^2
V''(t,x) + V'(t,x) \frac{d}{dt}\Phi(t) \leq \\
(-U(t,x))V'(t,x)^2 \left(2- \frac{1}{\phisup} U(t,x) V'(t,x)
\right)
\end{multline*}
Thus we have  \eqref{bound_on_U_V_prime} with $C_2=\max \{ m_1(0),
2\phisup \}$ again by a "forbidden region"-argument. Substituting the bounds
\eqref{m_1_bound_for_general_frozen_eq} and
\eqref{bound_on_U_V_prime} into \eqref{general_frozen_eq_V} we get
\[ \frac{d}{dt} \left( -V(t,x) \right) \leq C_2+C\cdot \lambda_{sup}=: C^* \]
for all $x$. Thus $V(t_1,x)-V(t_2,x) \leq C^* \cdot (t_2-t_1)$.
Letting $x \to 0_+$ and substituting into
\eqref{theta_def_nonnegative_general_frozen_eq} the claim of the
lemma follows.
\end{proof}

We are going to prove Lemma \ref{no_giant_lemma} by
contradiction: in Lemma \ref{lemma:disjoint_intervals_big_mass} we show that if $\theta(\cdot)\not \equiv 0$ in the limit, then there is a positive time interval  such that $\theta(t)$ has a positive lower bound, and that this implies that even in the convergent sequence of finite-volume models, a lot of mass is contained in arbitrarily big components on this interval. Than in subsequent Lemmas  we prove that these big components indeed burn, which produces such a big increase in the value of the burnt mass $\Phi(\cdot)$ that is in contradiction with
$\Phi(\cdot) \leq m_0(0)$.

For any frozen percolation evolution obtained from a frozen
percolation Markov process on a finite number of vertices we
obviously have $\theta^N(t)\equiv 0$ (see
\eqref{frozen_evolution_for_N} and
\eqref{theta_def_nonnegative_general_frozen_eq}), thus
\be\label{no_giant_for_finite_N}
 \forall
K\in \N \quad \sum_{k > K} v_k^N(t)=
m_0(t)-w_K^N(t)=m_0(0)-\Phi^N(t)-w_K^N(t) \ee

\begin{lemma}
\label{lemma:disjoint_intervals_big_mass} If $ \Prb_N \weak \Prb$
where $\Prb$ does not satisfy \eqref{no_giant_formula} on $\lbrack
0, T \rbrack$, then there exist $\rbeps_1$, $\rbeps_2$,
$\rbeps_3>0$ and a deterministic $\spect\in[\rbeps_1,T]$ such that
for every $K<+\infty$, every $m<+\infty$ and every sequence
\[
\spect -\rbeps_1 <\alpha_1<\beta_1<\alpha_2<\beta_2
<\dots<\alpha_m<\beta_m < \spect
\]
there exists an $N_0<+\infty$ such that for every $N\geq N_0$ and
$1 \leq i \leq m$ we have
\begin{equation}
\label{disjoint_intervals_big_mass} \Prb_N \left(\max_{\alpha_i \leq
t \leq \beta_i} \sum_{k>K}v_k^N(t)
>\rbeps_2 \right)
> \rbeps_3.
\end{equation}
\end{lemma}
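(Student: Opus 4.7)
The strategy is to first pin down a deterministic time $\spect$ and level $\delta$ at which the limit giant $\theta$ is positive with fixed probability, propagate this to a whole interval via the Lipschitz bound \eqref{giant_lipchitz}, and then transfer the statement to the finite-$N$ processes by weak convergence. Failure of \eqref{no_giant_formula} means $\Prb(\theta\not\equiv 0)>0$, so decomposing over rational times and rational levels one can fix $\spect\in(0,T]$ and $\delta>0$ with $p_0:=\Prb(\theta(\spect)>\delta)>0$. By Lemma \ref{lemma_weaklimit}, $\Prb$ is supported on solutions of the general frozen percolation equation, hence \eqref{giant_lipchitz} holds pathwise $\Prb$-a.s.; setting $\rbeps_1:=\min\{\spect,\delta/(2C^*)\}$ and $\rbeps_2:=\delta/4$, on $\{\theta(\spect)>\delta\}$ one gets $\theta(t)>\delta/2>\rbeps_2$ for every $t\in[\spect-\rbeps_1,\spect]$.

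Since $m_0(0)-\Phi(t)-w_K(t)\searrow\theta(t)$ as $K\to\infty$, this quantity also exceeds $\rbeps_2$ on the same event, for every $K$ and every such $t$. Given the intervals $[\alpha_i,\beta_i]$ and the cut-off $K$ from the statement, in each interval we choose an evaluation time $t_i$ that is $\Prb$-almost surely a continuity point of both $w_K(\cdot)$ and $\Phi(\cdot)$. Such a $t_i$ exists because on each sample path these two functions are monotone and thus have only countably many discontinuities; by Fubini, the exceptional set $\{t\in[0,T]:\Prb(t\text{ is a discontinuity of }w_K\text{ or }\Phi)>0\}$ has Lebesgue measure zero, and so cannot exhaust $[\alpha_i,\beta_i]$. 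At such $t_i$, the functional $(\conf{w},\Phi)\mapsto m_0(0)-w_K(t_i)-\Phi(t_i)$ is $\Prb$-a.s.\ continuous on the polish space $\rbspace_{\conf{w}}[0,T]$.

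The continuous mapping theorem together with the Portmanteau inequality for the open half-line $(\rbeps_2,\infty)$ then gives $\liminf_{N\to\infty}\Prb_N(m_0(0)-w_K^N(t_i)-\Phi^N(t_i)>\rbeps_2)\geq\Prb(m_0(0)-w_K(t_i)-\Phi(t_i)>\rbeps_2)\geq p_0$. For finite $N$ we have $\theta^N\equiv 0$, so by \eqref{no_giant_for_finite_N} the quantity inside $\Prb_N$ equals $\sum_{k>K}v_k^N(t_i)$, which is dominated by $\max_{\alpha_i\leq t\leq\beta_i}\sum_{k>K}v_k^N(t)$. Since $m<\infty$, a single $N_0$ works simultaneously for all $i\leq m$; setting $\rbeps_3:=p_0/2$ then yields \eqref{disjoint_intervals_big_mass}. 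The main technical subtlety is the second step above: the time-evaluation functional is \emph{not} globally continuous on $\rbspace_{\conf{w}}[0,T]$ (only at points where $t_i$ is a path-continuity point of $w_K$ and $\Phi$), and the monotonicity built into Definition \ref{def_frozen_percolation_evolution} is exactly what makes it possible to select a good $t_i$ in every subinterval.
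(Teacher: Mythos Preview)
Your proof is correct and follows the same overall architecture as the paper: locate a deterministic $\spect$ at which $\theta$ is positive with fixed probability, use the one-sided Lipschitz bound \eqref{giant_lipchitz} to propagate this to the interval $[\spect-\rbeps_1,\spect]$, then push the statement down to the finite-$N$ processes via weak convergence and \eqref{no_giant_for_finite_N}.

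The one place where you and the paper diverge is the weak-convergence step. You evaluate the tail mass at a single carefully chosen time $t_i\in(\alpha_i,\beta_i)$ that is $\Prb$-a.s.\ a continuity point of $w_K$ and $\Phi$ (existence via the Fubini/monotonicity argument), and then invoke the continuous mapping theorem for a.s.\ continuous functionals together with Portmanteau. The paper instead defines the \emph{averaged} functionals
\[
f_i\big((w_k)_k,\Phi\big):=\frac{1}{\beta_i-\alpha_i}\int_{\alpha_i}^{\beta_i}\big(m_0(0)-w_K(t)-\Phi(t)\big)\,dt,
\]
which are \emph{globally} continuous on $\rbspace_{\conf{w}}[0,T]$ (pointwise convergence of monotone bounded functions at continuity points gives a.e.\ convergence, hence convergence of the integral by dominated convergence), so that $\{f_i>\rbeps_2\}$ is an open set and Portmanteau applies directly. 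Your route is perfectly valid and standard; the paper's averaging trick just sidesteps the need to select good evaluation times and the a.s.-continuity version of the mapping theorem.
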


\begin{proof}
First we prove that if $\Prb$ does not satisfy
\eqref{no_giant_formula} then there exist
$\rbeps_1,\rbeps_2,\rbeps_3>0$ and $\rbeps_1 \leq \spect \leq T$
such that
\begin{equation}
\label{det_time_ineterval_giant} \Prb \big(\inf_{\spect-\rbeps_1
\leq t \leq \spect}\theta(t) > \rbeps_2\big) > \rbeps_3.
\end{equation}
Since \eqref{no_giant_formula} is violated, we have $ \Prb\big(
\sup_{0\leq t \leq T} \theta(t) > \rbeps\big)>\rbeps $ for some
$\rbeps>0$.

Let $L:=\lfloor \frac{2C^* T}{\rbeps} \rfloor$ and
$t_i:=\frac{\rbeps i}{2C^*} $ for $1 \leq i \leq L$ where $C^*$ is
the constant in \eqref{giant_lipchitz}.

 By Lemma \ref{lemma_weaklimit}. the random frozen percolation
evolution obtained as a weak limit point satisfies
\eqref{general_frozen_eqs} with a possibly random control function
$\Phi$, so \eqref{giant_lipchitz} holds $\Prb$-almost surely for the
random element of $\rbspace_{\conf{w}} \lbrack 0, T \rbrack$
obtained as a weak limit point.

 Since $\theta(0)=0$ we have
\[
\big\{ \sup_{0\leq t\leq T} \theta(t) > \rbeps \big\} \subseteq
\bigcup_{i=1}^L \big\{\theta(t_i)> \frac{\rbeps}{2} \big\}
\]
almost surely with respect to $\Prb$. Thus $\Prb \big(\theta(\spect)>
\frac{\rbeps}{2}\big) > \frac{\rbeps}{L}$ for some $\spect \in \{
t_1,\dots t_L \}$. Using \eqref{giant_lipchitz} again
\eqref{det_time_ineterval_giant} follows with
$\rbeps_1:=\frac{\rbeps}{4C^*}$, $\rbeps_2:=\frac{\rbeps}{4}$,
$\rbeps_3=\frac{\rbeps}{L}$.

Now given $K$ and the intervals $\lbrack \alpha_i,\beta_i
\rbrack$, $1 \leq i \leq m$ we define the continuous functionals
$f_i: \rbspace_{\conf{w}} \lbrack 0, T \rbrack \to \R$ by
\[
f_i \left( \left(w_k(\cdot) \right)_{k=1}^{\infty}, \Phi(\cdot)
\right):= \frac{1}{\beta_i-\alpha_i} \int_{\alpha_i}^{\beta_i}
\big(m_0(0)-w_K(t)-\Phi(t) \big)dt
\]
 Thus for all $i$
\[
H_i:= \{ \left( \left(w_k(\cdot) \right)_{k=1}^{\infty},
\Phi(\cdot) \right) \in \rbspace_{\conf{w}} \lbrack 0, T \rbrack
: f_i\left( \left(w_k(\cdot) \right)_{k=1}^{\infty}, \Phi(\cdot)
\right)>\rbeps_2 \}
\]
is an open subset of $\rbspace_{\conf{w}} \lbrack 0, T \rbrack$
with respect to the topology of Definition
\ref{def_frozen_percolation_evolution}. Thus by the definition of
weak convergence of probability measures we have
\[
\lim_{N \to \infty} \Prb_N(H_i) \geq \Prb(H_i) \geq
\Prb\left(\inf_{\spect-\rbeps_1 \leq t \leq \spect}\theta(t) >
\rbeps_2 \right) > \rbeps_3
\]
from which the claim of the lemma easily follows by
\eqref{no_giant_for_finite_N}.
\end{proof}

\begin{lemma}
\label{lemma:if_big_mass_big_fire_soon} If $\frac{1}{N} \ll
\mu(N)$ and $0<\lambda_{inf} \leq \lambda(t)$, then for every
$\rbeps_2>0$ there is a $\rbeps_4>0$ such that for every $\rbt>0$
there is a $K$ and an $N_1$ such that
\begin{equation}
\label{if_big_mass_big_fire_soon_eq} \forall N \geq N_1 \quad
 \sum_{k > K} v_k^N(0)  \geq
\rbeps_2 \; \implies \; \E_N \left(\Phi^N(\rbt) \right) \geq
\rbeps_4
\end{equation}
\end{lemma}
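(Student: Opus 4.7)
My plan starts from the identity $\frac{d}{dt}\E(\Phi^N(t))=\lambda(t)\mu(N)\E(m_1^N(t))$, which follows by summing $k\cdot L v_k^N(t)$ over $k$ in \eqref{frozen_infinitesimal_expected_change_v}, and reduces the task to lower-bounding $\E(m_1^N(t))$ on $[0,\rbt]$. The key monotonicity observation is that $\sum_{k>K}v_k^N(t)$ is non-decreasing under pure coagulation (any merger of components of combined size $>K$ yields a component still in the sum), and decreases only when a component of size $>K$ is burnt, in which case $\Phi^N$ increases by the same amount. Consequently $\sum_{k>K}v_k^N(t)\geq \rbeps_2-\Phi^N(t)$ for all $t\leq\rbt$, whence $m_1^N(t)\geq(K+1)(\rbeps_2-\Phi^N(t))_+$. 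Substituting into the expectation evolution and applying Gr\"onwall's inequality (using convexity of $x\mapsto(\rbeps_2-x)_+$) yields
\[ \E(\Phi^N(\rbt))\geq \rbeps_2\bigl(1-\exp(-\lambda_{inf}\mu(N)(K+1)\rbt)\bigr). \]

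In the subcritical regime $\mu(N)\equiv 1$ this closes the argument: pick $K$ with $\lambda_{inf}(K+1)\rbt\geq\log 2$ and take $\rbeps_4=\rbeps_2/2$, depending only on $\rbeps_2$. In the critical regime $\mu(N)\ll 1$, however, the exponent vanishes for any fixed $K$, and a more refined argument is required. I would couple the frozen process with the pure Erd\H{o}s-R\'enyi process (no lightnings) on the same edge arrivals and exploit that $m_1^N(0)\geq(K+1)\rbeps_2$ forces the pure ER gelation time $1/m_1^N(0)\leq 1/((K+1)\rbeps_2)$ to be below $\rbt/3$ once $K$ is chosen large enough (depending on $\rbt,\rbeps_2$). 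Standard convergence of the Marcus-Lushnikov process to \eqref{smoluchowki_ER} then provides a deterministic $\theta_*=\theta_*(\rbeps_2)>0$ (essentially $\rbeps_2/2$, obtained by rescaling time so that $1/m_1^N(0)$ becomes the natural unit) such that, for $N$ large enough, the pure ER graph at time $\rbt/2$ has a giant component of mass at least $\theta_*$ with probability at least $3/4$.

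A case split at time $\rbt/2$ then finishes: either $\Phi^N(\rbt/2)\geq\rbeps_2/4$, in which case we are done; or the pure ER giant is mostly intact in the frozen graph (fewer than $\rbeps_2 N/4$ of its vertices are burnt) and thus contains a frozen-graph connected component of mass at least $\rbeps_5=\rbeps_5(\rbeps_2)>0$. In the second subcase, the lightning rate on this macroscopic frozen component is at least $\lambda_{inf}\mu(N)\rbeps_5 N$, which diverges by the hypothesis $1/N\ll\mu(N)$; so the component is destroyed before $\rbt$ with probability $1-o(1)$, contributing at least $\rbeps_5$ to $\Phi^N(\rbt)$. Setting $\rbeps_4=\tfrac14\min(\rbeps_2/4,\rbeps_5)$ (depending only on $\rbeps_2$) completes the argument. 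The main technical obstacle is this \emph{resilience} step: showing that removing up to $\rbeps_2 N/4$ vertices from the pure ER giant --- adversarially chosen by the frozen dynamics --- leaves a macroscopic connected component in the remaining graph. I would handle this via the classical fact that the supercritical \ER giant has a linear-sized $2$-core and is robust to deletion of an $o(1)$-fraction of vertices; alternatively, one can bypass connectivity entirely by showing directly that $\E(m_1^N(t))$ remains comparable to the pure-ER first moment on the event $\{\Phi^N(\rbt/2)<\rbeps_2/4\}$.
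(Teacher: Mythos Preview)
Your Gr\"onwall argument for the subcritical case $\mu(N)\equiv 1$ is correct and considerably simpler than what the paper does: the monotonicity of $\sum_{k>K}v_k^N(t)+\Phi^N(t)$ together with $\frac{d}{dt}\E\Phi^N=\lambda(t)\mu(N)\E m_1^N$ is all you need there.

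For the critical regime the resilience step is a genuine gap, and neither of your proposed fixes closes it as stated. You are deleting a \emph{constant} fraction of the giant (up to $\rbeps_2 N/4$ vertices out of a giant of mass $\approx\rbeps_2/2$), not an $o(1)$-fraction, so the ``classical'' robustness you invoke does not apply. Adversarial removal of a constant fraction of vertices from an Erd\H{o}s--R\'enyi giant can shatter it unless one controls the degree of supercriticality quantitatively; here that degree depends on the initial configuration and on $\rbt$ (through your choice of $K$), and you would need a robustness statement uniform in those parameters that still yields $\rbeps_5$ depending only on $\rbeps_2$. The ``bypass connectivity'' alternative also does not go through: the coupling gives $|\cC_{\text{frozen}}(v)|\leq |\cC_{\text{ER}}(v)|$, which is the wrong direction for lower--bounding $m_1^N$ in the frozen process. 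There is a second, smaller issue: your appeal to ``standard convergence of the Marcus--Lushnikov process'' to get $\theta_*=\theta_*(\rbeps_2)$ would have to be uniform over all initial states with $\sum_{k>K}v_k^N(0)\geq\rbeps_2$, since in the application the initial state is the (random, $N$-dependent) configuration at a stopping time, not a fixed element of $\myNz$.

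The paper avoids the resilience problem entirely by never comparing to the pure coagulation process. It works in two steps, both carried out directly in the frozen process. First, a characteristic-curve argument for the generating function $V(t,x)=\sum_k v_k^N(t)e^{-kx}$ (run along a random curve $x(t)$ that absorbs the unknown $\Phi^N$ and $\lambda$ terms, stopped when $x(t)$ hits $N^{-1/3}$) shows that the hypothesis $\sum_{k>K}v_k^N(0)\geq\rbeps_2$ forces, with high probability, either $\Phi^N(C_1/(K\rbeps_2))\geq C_2\rbeps_2$ or $\sum_{k>C_3\rbeps_2 N^{1/3}}v_k^N\geq C_2\rbeps_2$ by that time. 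Second, once there is mass $\geq c\rbeps_2$ in components of size $>cN^{1/3}$, one tracks the component of a single such vertex in the frozen graph: a submartingale argument on $\log c_N(t)$ shows it reaches macroscopic size in time $\Ordo(N^{-1/3}\rbeps_2^{-2}\log N)$, and then burns in time $\Ordo((N\mu(N))^{-1})$. Both time scales vanish as $N\to\infty$, and the constants $C_2,C_3$ depend only on $\rbeps_2$, giving $\rbeps_4$ independent of $\rbt$ as required. The point is that the first step upgrades ``size $>K$'' to ``size $>cN^{1/3}$'' using only the PDE structure, so the graph step never has to confront the question of what deleting burnt vertices does to an externally constructed giant.
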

The proof of Lemma \ref{lemma:if_big_mass_big_fire_soon}. will
follow as a consequence of the Lemmas
\ref{lemma:almost_giant_or_lot_burnt}. and
\ref{lemma:if_almost_giant_then_big_fire_soon}.

\begin{proof}[Proof of Lemma \ref{no_giant_lemma}]
We are going to show that if there is a sequence $\Prb_N$ such that
the weak limit point $\Prb$ violates \eqref{no_giant_formula} then
for some $N$ we have \be\label{contradiction:too_much_fire} \E_N
\left(\Phi^N(T)\right)
> m_0(0) \ee which is in contradiction with
\eqref{theta_def_nonnegative_general_frozen_eq}.

We define $\rbeps_1$, $\rbeps_2$, $\rbeps_3>0$ and $\spect$ using
Lemma \ref{lemma:disjoint_intervals_big_mass}.
Next, we define
$\rbeps_4$ using this $\rbeps_2$ and Lemma
\ref{lemma:if_big_mass_big_fire_soon}. Given these, we choose
$\rbt$ be so small that
\[
\left\lfloor \frac{\rbeps_1}{2\rbt} \right\rfloor \rbeps_3
\rbeps_4 >m_0(0).
\]
We choose $K$ and $N_1$ big enough so that
\eqref{if_big_mass_big_fire_soon_eq} holds for this $\rbt$.
Further on, we fix the intervals $\lbrack \alpha_i,\beta_i
\rbrack$, $1 \leq i \leq m=\lfloor \frac{\rbeps_1}{2\rbt} \rfloor$
so that $\alpha_{i+1}-\beta_i > \rbt$ holds for all $i$ and also
$T -\beta_m>\rbt$ holds. We choose $N_0$ such that
\eqref{disjoint_intervals_big_mass} holds and let $N:=\max
\{N_0,N_1\}$.

Finally, we define the stopping times $\tau_1, \tau_2,\dots,
\tau_m$ by
\[
\tau_i:=\beta_i \wedge \min \{ t: t \geq \alpha_i \text{ and }
\sum_{k>K} v_k^N(t) \geq \rbeps_2 \}.
\]
We have $\tau_i+\spect \leq \beta_i +\spect < \alpha_{i+1} \leq
\tau_{i+1}$.

Using the strong Markov property,
\eqref{if_big_mass_big_fire_soon_eq} and
\eqref{disjoint_intervals_big_mass}, the inequality
\eqref{contradiction:too_much_fire} follows:
\begin{multline*}
\expect{\Phi^N(T)}\geq \sum_{i=1}^m
\expect{\Phi^N(\tau_i+\spect)-\Phi^N(\tau_i)} \geq \\
 \sum_{i=1}^m \expect{
 \condexpect{(\Phi^N(\tau_i+\spect)-\Phi^N(\tau_i))\ind \lbrack
 \sum_{k>K} v_k^N(\tau_i) \geq \rbeps_2 \rbrack }{\mathcal{F}_{\tau_i} }
 } \geq \\
 \sum_{i=1}^m \rbeps_4 \prob{\sum_{k>K} v_k^N(\tau_i) \geq
 \rbeps_2}
\geq m \rbeps_4 \rbeps_3>m_0(0).
\end{multline*}
\end{proof}

For a frozen percolation evolution defined by
\eqref{frozen_evolution_for_N} we have \be
\label{def_U_for_finite_N}
 U(t,x)=\sum_{k \geq 1}
v_k^N(t) e^{-kx} -(m_0(0)-\Phi^N(t))=V(t,x)-m_0^N(t)=\sum_{k \geq
1} v_k^N(t)\left(e^{-kx}-1 \right) \ee

We will make use of the following generating function estimates in
the proof of Lemma \ref{lemma:almost_giant_or_lot_burnt}.

 If
$U(x)=\sum_{k \geq 1}v_k \left(e^{-kx}-1 \right)$ where $\conf{v}
\in \myN$ then
\begin{eqnarray}
\label{tail_to_gf}
\sum_{k > K} v_k  \geq \varepsilon
& \implies &
U(1/K) \leq (e^{-1}-1)\varepsilon
\\
\label{gf_to_tail}
U(1/K) \leq -\varepsilon
& \implies &
\sum_{k >\frac{\varepsilon K}{2}}  v_k  \geq \varepsilon/2.
\end{eqnarray}

\begin{lemma}
\label{lemma:almost_giant_or_lot_burnt} There are constants $C_1
<+\infty$, $C_2>0$, $C_3>0$ such that if
\begin{equation}
\label{bigmass66} \sum_{k>K} v_k^N(0) \geq \rbeps_2
\end{equation}
for all $N$ then
\begin{equation}
\label{almost_giant_or_lot_burnt_eq} \lim_{N \to \infty} \prob{
\sum_{k>C_3 \rbeps_2 N^{1/3}} v_k^N \left(\rbtfin \right)+ \Phi^N
\left(\rbtfin \right)
 \geq C_2 \rbeps_2} =1
\end{equation}
Where $\rbtfin=\frac{C_1}{K\rbeps_2}$.
\end{lemma}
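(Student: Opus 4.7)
The estimates \eqref{tail_to_gf}--\eqref{gf_to_tail} frame the argument. By \eqref{tail_to_gf} and the hypothesis \eqref{bigmass66}, one has the initial input $U^N(0, 1/K) \leq -(1-e^{-1})\rbeps_2 =: -c_0 \rbeps_2$, and the conclusion will follow once, at time $\rbtfin$, either $\Phi^N(\rbtfin)$ is already of order $\rbeps_2$ or $U^N(\rbtfin, x_N) \leq -c\, \rbeps_2$ at a spatial scale $x_N \sim N^{-1/3}$ (the latter translates via \eqref{gf_to_tail} into the required tail bound). My plan is therefore to propagate the bound on $U^N$ forward in time along the linear characteristic $\tilde x(t) := 1/K - c t$, with $c := c_0 \rbeps_2 /4$, from $\tilde x(0)=1/K$ down to $\tilde x(\rbtfin) = N^{-1/3}$; this determines $\rbtfin = C_1/(K\rbeps_2)$ with $C_1 = 4/c_0$ (up to a shift of order $N^{-1/3}$).

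From the Markov generator \eqref{frozen_infinitesimal_expected_change_v} I read off that the drift of $V^N(t,x) = \sum_k v_k^N(t) e^{-kx}$ is $V'^N(t,x)\bigl(\lambda(t)\mu(N) - U^N(t,x)\bigr) + R^N(t,x)$, where $R^N(t,x) = \bigl(V''^N(t,x) - V''^N(t,2x)\bigr)/N$ is $O(1/(Nx^2))$ via the universal bound $V''^N(t,x) \leq C/x^2$, and the expected rate of change of $\Phi^N(t) = m_0(0)-m_0^N(t)$ is exactly $\lambda(t)\mu(N) m_1^N(t)$. Introducing the compensated quantity
\[
Y^N(t) := U^N(t, \tilde x(t)) - \Phi^N(t),
\]
It\^o's formula gives, with $\mathcal{M}^N$ a martingale,
\[
Y^N(\rbtfin) - Y^N(0) = \int_0^{\rbtfin} V'^N(s,\tilde x(s)) \bigl(\lambda(s)\mu(N) - U^N(s,\tilde x(s)) - c\bigr)\,ds + \int_0^{\rbtfin} R^N(s,\tilde x(s))\,ds + \mathcal{M}^N(\rbtfin).
\]
Since $V'^N \leq 0$, the first integrand is non-positive whenever $U^N \leq -c$ and $\lambda\mu \leq c$---the latter holding for large $N$ as $\mu(N) \ll 1$---so a forbidden-region argument along $\{U^N = -c\}$ will yield $Y^N(\rbtfin) \leq Y^N(0)$ up to the error terms. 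Along the characteristic, $\int_0^{\rbtfin} R^N(s,\tilde x(s))\,ds \lesssim 1/(Nc\tilde x(\rbtfin)) = O(N^{-2/3})$, and the jump-size bound $|\Delta V^N(t,x)| \leq 6/(Nex)$ gives $\langle \mathcal{M}^N\rangle_{\rbtfin} = O(N^{-2/3})$, so both vanish in probability as $N \to \infty$.

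Rearranging then produces $U^N(\rbtfin, N^{-1/3}) \leq -c_0 \rbeps_2 + \Phi^N(\rbtfin) + o(1)$ with probability tending to $1$. Either $\Phi^N(\rbtfin) \geq c_0 \rbeps_2/2$, in which case the conclusion holds with $C_2 = c_0/2$, or else $U^N(\rbtfin, N^{-1/3}) \leq -c_0 \rbeps_2/4$, in which case \eqref{gf_to_tail} yields $\sum_{k > (c_0 \rbeps_2 /8) N^{1/3}} v_k^N(\rbtfin) \geq c_0 \rbeps_2 / 8$, i.e.\ the conclusion with $C_3 = c_0/8$. The principal technical difficulty is making the forbidden-region argument rigorous for a pure-jump process: an upward jump of $U^N(t,\tilde x(t))$ originates either from a coagulation (bounded in absolute size by $6/(Ne\tilde x(t))$ and thus absorbed into $\mathcal{M}^N$) or from a burning event, which simultaneously augments $\Phi^N$ by the mass of the burnt component and thereby leaves $Y^N$ non-increasing. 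Exploiting this exact cancellation---which is the reason for the definition of $Y^N$---together with Doob's maximal inequality applied to $\mathcal{M}^N$ uniformly over $t \in [0, \rbtfin]$, constitutes the bulk of the proof.
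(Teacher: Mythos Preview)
Your approach is correct and does lead to a valid proof, but it differs from the paper's in one essential respect: the paper follows the \emph{random} characteristic $\dot x(t)=V(t,x(t))-\bigl(m_0(0)-\Phi^N(t)+\lambda(t)\mu(N)\bigr)$ starting at $x(0)=1/K$, along which the compensator of $\nu(t):=V(t,x(t))$ is \emph{exactly} the error term $R^N$. This makes $\nu(t\wedge\tau_N)-\nu(0)$ small in probability without any forbidden--region argument; the stopping time $\tau_N=\inf\{t:x(t)=N^{-1/3}\}$ is then bounded by $\rbtfin$ via the explicit formula \eqref{char_rnd_int} for $x(t)$ and a two--case analysis on the size of $\Phi^N$. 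Your deterministic linear characteristic trades this exact cancellation for a sign condition on the drift, which is why you need the forbidden--region step; the paper's route sidesteps that difficulty entirely.

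Two remarks on your write--up. First, the condition ``$\lambda\mu\le c$'' is superfluous and would wrongly exclude the subcritical regime $\mu(N)\equiv 1$ (which the lemma must also cover): since $\lambda\mu\ge 0$, the inequality $U^N\le -c$ already gives $\lambda\mu-U^N-c\ge 0$ and hence $V'^N(\lambda\mu-U^N-c)\le 0$. Second, your forbidden--region argument can be made rigorous essentially as you indicate: because every jump of $Y^N$ is non--positive (you verified this for both burning and coagulation), the process can only cross any level upward continuously; introducing the stopping time $\sigma=\inf\{t:\Phi^N(t)\ge 2c\}\wedge\rbtfin$ and arguing on $[0,\sigma)$ that $U^N\le -c$ (hence $D_1\le 0$) whenever $Y^N\le Y^N(0)+\eta$, together with Doob's inequality for $M^N$, closes the loop. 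What you gain over the paper is a deterministic terminal time $\rbtfin$ (no need to bound a random hitting time); what the paper gains is a one--line drift computation with no barrier analysis.
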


\begin{proof}[Sketch proof]
If we let $N \to \infty$ immediately, then by Lemma \ref{lemma_weaklimit} we get that the limiting functions $v_1(t),v_2(t),\dots$
solve \eqref{general_frozen_ODE} with initial condition $\conf{v}(0)$,
 a possibly random control function $\Phi(t)$ and some nonnegative  rate function  $\lambda(t)$.

The $N \to \infty$ limit of \eqref{almost_giant_or_lot_burnt_eq} is
\begin{equation}\label{toy}
 \theta \left( \rbtfin \right)+ \Phi\left(\rbtfin\right) \geq C_2
\rbeps_2 \end{equation}
Now we prove that if $\conf{v}(\cdot)$ is a solution of \eqref{general_frozen_ODE}
 then
 $\sum_{k>K} v_k(0) \geq \rbeps_2$  implies \eqref{toy} with $C_1=4$ and $C_2=\frac{1}{4}$.
  This proof will also serve as an outline of the proof of Lemma \ref{lemma:almost_giant_or_lot_burnt}.

In order to prove \eqref{toy} define $V(t,x)$ by \eqref{gfdeffff}. Thus $V(t,x)$ solves
\begin{equation}\label{V_diffe}
\dot{V}(t,x)=V'(t,x)\cdot \left( m_0(0)-\Phi(t)+\lambda(t) - V(t,x)\right)
\end{equation}

Define $U(t,x)$ by \eqref{def_U}. Define the characteristic curve $x(\cdot)$ by
\begin{equation}\label{charadiff_sketch}
\dot x(t)=V(t,x(t))-\left( m_0(0)-\Phi(t) +\lambda(t)\right) \qquad x(0)=\frac1K
\end{equation}
Let $\rbu(t):=V(t,x(t))$. Now by  \eqref{V_diffe} and \eqref{charadiff_sketch} we get
\begin{equation}\label{chara_u_ineq_sketch}
 \dot \rbu(t)=\dot{V}(t,x(t))+V'(t,x(t))\dot{x}(t)=0
  \end{equation}
 Thus $\rbu(t) \equiv \rbu(0)$, moreover by \eqref{def_U} we get
 $
 U(t,x(t))-U(0,x(0))=\Phi(t)
 $ and by $V(t,x(t))\equiv V(0,x(0))$, $V(0,x(0))-m_0(0)=U(0,x(0))$ and \eqref{charadiff_sketch} we get
 \begin{equation}\label{charaint_sketch}
x(t)=\frac1K +\int_{0}^t \Phi(s)\,ds
- \int_{0}^t \lambda(s)\,ds + t \cdot U(0,\frac1K)
\end{equation}
By \eqref{tail_to_gf} we have $U(0,\frac1K) \leq -\frac12 \rbeps_2$. In order to prove that
$\theta \left( \rbtfin \right)+ \Phi\left(\rbtfin\right) \geq \frac14 \rbeps_2$
with $\rbtfin=\frac{4}{K \rbeps_2}$ we consider two cases:

If $\Phi \left( \rbtfin \right) \geq \frac14 \rbeps_2$ then we are done.
If $\Phi \left( \rbtfin \right) < \frac14 \rbeps_2$ define $\tau:=\min \{t: x(t)=0\}$.
By \eqref{charaint_sketch} we have
\[ x(\rbtfin) \leq \frac1K + \rbtfin\cdot \Phi(\rbtfin)+ \rbtfin\cdot \left(-\frac12 \rbeps_2 \right) <
 \frac1K +\frac1K- \frac2K=0 \]
Thus $\tau \leq \rbtfin$.
\[ -\theta(\tau)=
U(\tau,0)=U(\tau,x(\tau))=U(0,\frac1K)+\Phi(\tau) \leq -\frac12 \rbeps_2 + \frac12 \rbeps_2 =-\frac14 \rbeps_2
\]
Thus $\frac14 \rbeps_2 \leq \theta(\tau) \leq \theta(\tau)+\Phi(\tau)\leq
\theta \left( \rbtfin \right)+ \Phi\left(\rbtfin\right)$ because by \eqref{theta_def_nonnegative_general_frozen_eq} the function
 $\theta(t)+\Phi(t)$ is increasing.
\end{proof}
To make this proof work for Lemma \ref{lemma:almost_giant_or_lot_burnt}  we have to deal with the fluctuations caused by randomness and combinatorial error terms.

\begin{proof}
Given a frozen percolation evolution obtained from a Markov
process by \eqref{frozen_evolution_for_N} define $U$ and $V$ by
\eqref{def_U_for_finite_N}.

Using \eqref{frozen_infinitesimal_expected_change_v}
 a straightforward calculation shows that
\begin{multline}
\label{V_infinitesimal} L V(t,x):= \lim_{h \to 0_+}
\frac{1}{h}\condexpect{V(t+h,x)-V(t,x)}{\cF_t}= \\
V'(t,x) \left( (m_0(0)-\Phi^N(t))+\lambda(t)\mu(N)-V(t,x) \right)
 + \frac{1}{N} \left( V^{\prime \prime} (t,x)-V^{\prime
\prime}(t,2x) \right)
\end{multline}
 Given the random
function $V(t,x)$ we define the random characteristic curve $x(t)$
similarly to \eqref{charadiff_sketch}:
\begin{equation}
\label{rnd_charadiff}
 \dot{x}(t)=V(t,x(t))-\left((m_0(0)-\Phi^N(t))+\lambda(t)\mu(N) \right),
 \quad \quad x(0)=\frac{1}{K}
\end{equation}
This ODE is well-defined although $V(t,x)$ is not continuous in
$t$, but almost surely it is a step function with finitely many
steps which is a sufficient condition to have well-posedness for
the solution of \eqref{rnd_charadiff}. Define
$\rbu(t):=V(t,x(t))$.
\begin{multline}
\label{char_rnd_int} x(t)=\frac1K+\int_{0}^t
\left(\rbu(s)-\rbu(0)\right)ds +\int_{0}^t \Phi^N(s)
ds
-\mu(N) \int_{0}^t \lambda(s)ds + t \cdot U(0,\frac1K)
\end{multline}
Putting together \eqref{V_infinitesimal} and \eqref{rnd_charadiff}
we get
\begin{equation}
\label{char_infinitesimal}
 \lim_{h \to 0_+}
 \frac{1}{h}\condexpect{\rbu(t+h)-\rbu(t))}{\cF_t}=
 \frac{1}{N}
\left( V^{\prime \prime} (t,x(t))-
V^{\prime\prime}(t,2x(t))\right) = \Ordo \left( \frac{1}{N}
V^{\prime\prime} (t,x(t)) \right)
\end{equation}
Thus $\wt\rbu(t)=\rbu(t)-\int_{0}^t \frac{1}{N} \left( V^{\prime
\prime} (s,x(s))-V^{\prime \prime}(s,2x(s))\right)ds$ is a
martingale and by \eqref{lightning_rate} and \eqref{coag_rate} we
get
\begin{multline}
\label{char_variance} \lim_{h \to 0_+} \frac{1}{h} \condexpect{\wt
\rbu(t+h)^2- \wt\rbu(t)^2}{\cF_t} = \lim_{h \to 0_+}\frac{1}{h}
\condexpect{\big(V(t+h,x(t))-V(t,x(t))\big)^2 }{\cF_t}
\\
\leq \frac{1}{2} \sum_{k,l=1}^N \left(\frac{k+l}{N} e^{-(k+l)x(t)}
-\frac{k}{N} e^{-k x(t)} -\frac{l}{N}e^{-lx(t)}\right)^2
v_k^N(t)v_l^N(t) N
\\
+ \sum_{l=1}^N \left(\frac{l}{N} e^{-lx(t)}\right)^2
\mu(N)\lambda(t) v_l^N(t) N = \Ordo \left(\frac{1}{N} V^{\prime
\prime} (t,x(t)) \right)
\end{multline}

 Define the stopping time
\[
\tau_N:=\min\{t:x(t)=N^{-1/3}\}.
\]
(Note that we could replace $N^{-1/3}$ by $N^{-\gamma}$, $0<\gamma<1/2$ without changing the proof.)

It follows from  \eqref{trivibounds}, \eqref{char_infinitesimal},
\eqref{char_variance} and
 Doob's maximal inequality that
\begin{equation}
\label{char_weak} \sup_{0\leq t \leq T }\abs{\rbu(t \wedge \tau_N
\wedge T)-\rbu(0)} \weak 0 \quad\text{ as }\quad  N \to \infty
\end{equation}
By \eqref{tail_to_gf} and \eqref{bigmass66} we have
\begin{equation}
\label{rbeps5} U(0,x(0))
 \leq
 (e^{-1} -1)\rbeps_2=:-\rbeps_5
\end{equation}

Let
\begin{align*}
A_N &:= \big\{\int_{0}^{\tau_N\wedge T} \abs{ \rbu(s) -\rbu(0)}ds
\leq \frac{1}{K} \big\} \cap \big\{\abs{\rbu(\tau_N\wedge
T)-\rbu(0)} \leq {\rbeps_5}/3 \big\},
\\[8pt]
B_N &:= \big\{\Phi^N(\tau_N) \leq {\rbeps_5}/3\big\}.
\\[8pt]
\rbtfin &:= \frac{3}{K \abs{U(0, x(0))}} \leq \frac{3}{K\rbeps_5},
\end{align*}
 We
are going to show that that there are constants $C_2,C_3<+\infty$
such that
\begin{equation}
\label{we_show_this} A_N \subseteq \left\{ \sum_{k>C_3 \rbeps_2
N^{1/3}} v_k^N \left(\rbtfin \right) +  \Phi^N \left(\rbtfin
\right)
 \geq C_2 \rbeps_2
\right\}
\end{equation}
which together with \eqref{char_weak} implies $\lim_{N \to \infty}
\prob{A_N}=1$ and \eqref{almost_giant_or_lot_burnt_eq}.

First we show that
\begin{equation}
\label{ABsubseteq} A_N \cap B_N \subseteq \{\tau_N \leq  \rbtfin
\}.
\end{equation}
If we assume indirectly that $A_N$, $B_N$ and $\tau_N>\rbtfin$
hold then $\int_{0}^{\rbtfin}
\abs{\rbu(s)-\rbu(0)}ds\leq\frac{1}{K}$, so by
\eqref{char_rnd_int} we get
\[
x(\rbtfin) \leq \frac{1}{K} +\frac{1}{K}+ \int_{0}^{\rbtfin}
\Phi^N(s) ds+\rbtfin U(0,x(0)) \leq -\frac{1}{K}+ \rbtfin
\frac{\rbeps_5}{3} \leq 0.
\]
But $x(\rbtfin) \leq 0$ is in contradiction with $\tau_N>\rbtfin$,
thus \eqref{ABsubseteq} holds. Assuming $A_N$ and $B_N$ we obtain
\[
\abs{ U(\tau_N,x(\tau_N))-U(0,x(0))}\leq
\abs{\rbu(\tau_N)-\rbu(0)} + \Phi^N(\tau_N) \leq \rbeps_5/3+
\rbeps_5/3
\]
which together with \eqref{rbeps5} implies $A_N \cap B_N \subseteq
\{ U(\tau_N,N^{-1/3}) \leq -\rbeps_5/3 \}$

 By \eqref{gf_to_tail}
\begin{align*}
A_N \subseteq (A_N \cap B_N)\cup B_N^c &\subseteq
\big\{\sum_{k>N^{1/3}{\rbeps_5}/{6}} v_k^N(\tau_N) \geq
{\rbeps_5}/{6}\big\} \cup \big\{\Phi^N(\tau_N) > \rbeps_5/3\big\}
\\[8pt]
&\subseteq \left\{ \sum_{k>C_3 \rbeps_2 N^{1/3}} v_k^N(\tau_N) +
\Phi^N(\tau_N) \geq C_2 \rbeps_2 \right\}
\end{align*}
with $C_2=C_3=(1-e^{-1})/6$. But $\sum_{k>C_3\rbeps_2 N^{1/3}}
v_k^N(t)+ \Phi^N(t)$ is a monotone increasing function of $t$,
from which \eqref{we_show_this} follows.
\end{proof}

\begin{lemma}
\label{lemma:if_almost_giant_then_big_fire_soon} There are
constants $C_4<+\infty$, $C_5>0$ such that if
\[
\sum_{k>C_3 \rbeps_2 N^{1/3}} v_k^N(0)\geq {C_2\rbeps_2}/{2}
\]
for all $N$ then with
\begin{equation}
\label{t_bla_def}
\rbtfin_N:=C_4\rbeps_2^{-2}\big(N^{-1/3}\log(N)+(N\mu(N))^{-1}\big)
\end{equation}
we have
\begin{equation}
\label{bla_bla_7} \lim_{N \to \infty} \expect{\Phi^N(\rbtfin_N) }
\geq C_5 \rbeps_2.
\end{equation}
\end{lemma}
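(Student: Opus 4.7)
The plan is to split $\rbtfin_N$ into $T_1 := (C_4/2)\rbeps_2^{-2} N^{-1/3}\log N$ and $T_2 := (C_4/2)\rbeps_2^{-2}(N\mu(N))^{-1}$, devoting Phase 1 on $[0,T_1]$ to coagulating the large components into a macroscopic one, and Phase 2 on $[T_1,T_1+T_2]$ to burning it by lightning. A preliminary reduction: letting $\Psi^N(t) := \sum_{k>C_3\rbeps_2 N^{1/3}} v_k^N(t)$, a case analysis of the five possible transitions of the Markov process (small-small, small-large, large-large coagulations, and lightning on small or on large) shows that $t\mapsto \Psi^N(t)+\Phi^N(t)$ is non-decreasing. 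Hence I may assume throughout that $\Phi^N(t)<C_5\rbeps_2$ on $[0,\rbtfin_N]$ --- otherwise \eqref{bla_bla_7} already holds --- in which case $\Psi^N(t)\geq C_2\rbeps_2/4$ for all such $t$, provided $C_5\leq C_2/4$.

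For Phase 1 I would extend the characteristic-curve argument of Lemma \ref{lemma:almost_giant_or_lot_burnt}. Running the random curve $x(t)$ of \eqref{rnd_charadiff} from $x(0) = 1/(C_3\rbeps_2 N^{1/3})$, the estimate \eqref{tail_to_gf} applied to the hypothesis $\Psi^N(0)\geq C_2\rbeps_2/2$ gives $U(0,x(0))\leq -\rbeps_5$ with $\rbeps_5 := (1-e^{-1})C_2\rbeps_2/2$. The identity $U(t,x(t)) = U(0,x(0)) + (\rbu(t)-\rbu(0)) + \Phi^N(t)$ (coming from \eqref{char_rnd_int}), combined with Doob's maximal inequality applied to the martingale part of $\rbu$ --- whose variance rate is controlled via \eqref{char_variance} and the crude bound $V''(s,x(s))\leq 4m_0(0)/(e x(s))^2$ --- shows that with probability $\geq 1/2$, the characteristic reaches $x(\sigma)\leq (\alpha N)^{-1}$ at some $\sigma\leq T_1$ while $U(\sigma,x(\sigma))\leq -\rbeps_5/2$, for a suitably chosen $\alpha=\alpha(\rbeps_2)>0$. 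Applying \eqref{gf_to_tail} at this time produces a component of size $\geq c\rbeps_5\alpha N$ carrying mass $\geq \rbeps_5/4$.

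Phase 2 is then a direct lightning computation: on the good event of Phase 1, the largest component has size $\geq M := c\rbeps_5\alpha N$ at time $\sigma$ and can only grow in the meantime via further coagulation. Its lightning rate is therefore $\geq \lambda_{inf}\mu(N)M$, so the probability of no strike in $[\sigma,\sigma+T_2]$ is at most $\exp(-\lambda_{inf}c\rbeps_5\alpha C_4/(2\rbeps_2^2))$, which is $\leq 1/2$ for $C_4$ chosen large enough; any such strike contributes mass $\geq M/N = c\rbeps_5\alpha$ to $\Phi^N$ before $\rbtfin_N$. Combining, $\expect{\Phi^N(\rbtfin_N)}\geq (1/4)\cdot c\rbeps_5\alpha\geq C_5\rbeps_2$ for an appropriate constant $C_5>0$. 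The main obstacle is Phase 1: tracking the stochastic characteristic $x(t)$ down toward the macroscopic scale $1/N$, where the finite-$N$ variance contribution $1/(Nx(s)^2)$ becomes of order unity; controlling this via a dyadic concatenation of Doob inequalities on successively smaller scales of $x$ (or via a bootstrap iterating the coagulation argument at successively larger component-size thresholds) is what forces the $\log N$ factor in $T_1$.
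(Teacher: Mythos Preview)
Your preliminary reduction and Phase 2 are fine; the gap is in Phase 1. The characteristic-curve machinery of Lemma~\ref{lemma:almost_giant_or_lot_burnt} cannot be pushed to the macroscopic scale $x\sim 1/N$. Along the characteristic both the drift correction \eqref{char_infinitesimal} and the quadratic-variation rate \eqref{char_variance} are $O\big(V''(t,x(t))/N\big)=O\big(1/(Nx(t)^2)\big)$; integrating along $x(s)\approx x(0)-\rbeps_5 s$ gives a cumulative error in $\rbu$ of order $\int_0^\sigma ds/(Nx(s)^2)\sim 1/(N\rbeps_5 x(\sigma))$, which is $\Theta(\rbeps_5^{-1})$ at $x(\sigma)\sim 1/N$ and swamps the signal $\rbeps_5$ when $\rbeps_2$ is small. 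A dyadic union of Doob bounds does not help, since that sum is geometrically dominated by the smallest-$x$ block; nor does a bootstrap restarting at larger component thresholds, since each pass through \eqref{tail_to_gf}--\eqref{gf_to_tail} costs a fixed factor of mass and the number of passes would have to grow with $N$. At best you can halt at $x(\sigma)\sim\rbeps_5^{-3}/N$, yielding a single component of mass only $\Theta(\rbeps_2^4)$; fed into Phase 2 this proves a weaker statement with $\expect{\Phi^N}\gtrsim\rbeps_2^4$ and $\rbtfin_N\sim\rbeps_2^{-4}(N\mu(N))^{-1}$, not the lemma as stated. Your explanation of the $\log N$ is also off: the characteristic crosses $[x(\sigma),x(0)]$ at constant speed $\rbeps_5$, in time $O(N^{-1/3}/\rbeps_2)$ with no logarithm.

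The paper's argument is combinatorial and avoids generating functions. Fix a vertex $v$ in a component of size $\geq\bar N:=C_3\rbeps_2 N^{1/3}$ and set $c_N(t)=\abs{\cC_N(v,t)}/N$. As long as the unburnt mass in components of size $\geq\bar N$ stays above $C_2\rbeps_2/4$ and $c_N(t)\leq C_2\rbeps_2/8$, the component of $v$ merges with another such component at rate $\geq c_N(t)\cdot(C_2\rbeps_2/8)N$, and each such merge increases $\log c_N$ by $\geq\log(2)\,\bar N/(Nc_N(t))$; the product of rate and increment is $\geq c'\rbeps_2^2 N^{1/3}$, \emph{independent of $c_N(t)$}. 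Hence $\log c_N(t)-c'\rbeps_2^2 N^{1/3}\,(t\wedge\tau)$ is a submartingale, and since $\log c_N$ must climb a range of $O(\log N)$ to reach macroscopic size, optional stopping gives $\expect{\tau}=O(\rbeps_2^{-2}N^{-1/3}\log N)$ --- this is the true source of the $\log N$. Once $c_N$ is macroscopic, a lightning computation like your Phase 2 finishes in time $O(\rbeps_2^{-1}(N\mu(N))^{-1})$; averaging over all such $v$ then converts the per-vertex burning probability into the bound on $\expect{\Phi^N(\rbtfin_N)}$.
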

\begin{remark}
The upper bound \eqref{t_bla_def} is technical: on one hand it is not optimal,
 on the other hand, for the proof of
Lemma \ref{lemma:if_big_mass_big_fire_soon} we only need $\rbtfin_N \ll 1$ as $N \to \infty$.
\end{remark}
\begin{proof}
If $v$ is a vertex of the graph $G(N,t)$ let $\cC_N(v,t)$ denote
the connected component of $v$ at time $t$. Denote by $\tau_b(v)$
the freezing/burning time of $v$.


 \[ \cH_N(t):= \{ v  \, : \,
 \abs{\cC_N(v,0)} \geq C_3 \rbeps_2 N^{\frac13}\, \text{ and }\, \tau_b(v)>t  \} \]
We fix a vertex $v \in \cH_N(0)$.
\begin{align*}
c_N(t)&:=\frac{1}{N} \abs{\cC_N(v,t)}\\
w_N(t)&:=
\frac{1}{N}\abs{\cH_N(t)} \\
z_N(t)&:=\frac{1}{N}\abs{\cH_N(0)\setminus \cH_N(t)}=
w_N(0)-w_N(t)
\end{align*}
 Thus $c_N(t)$ is an increasing process until $\tau_b(v)$,
 $w_N(t)$ is decreasing, $z_N(t)$ is increasing.
  We consider the right-continuous version of the processes $c_N(t), w_N(t), z_N(t)$.
\[w_N(0) \geq {C_2\rbeps_2}/{2}
 =:\rbeps_6.\]
  We are going to prove that there are constants $C_4<+\infty$,
$C_5>0$ such that
\begin{equation}
\label{we_show_this_2} \lim_{N \to \infty} \expect{z_N(\rbtfin_N)}
\geq C_5 \rbeps_2
\end{equation}
with $\rbtfin_N$ defined as in \eqref{t_bla_def}. This implies
\eqref{bla_bla_7}.

 We define the stopping times
\begin{align*}
 \tau_w&:= \min \{t: w_N(t) < {\rbeps_6}/{2}
\} \\ \tau_g&:= \min \{t: c_N(t) > {\rbeps_6}/{4} \} \\
\tau&:=\tau_b(v) \wedge \tau_w \wedge \tau_g
 \end{align*}
 Let $\bar{N}:=C_3 \rbeps_2 N^{\frac13}$.
 Since $v \in \cH_n(0)$ we have
 \[c_N(t) \geq c_N(0) =\frac{\abs{\cC_N(v,0)}}{N} \geq \frac{\bar{N}}{N}
 \]

 If $\cC_N(v,t)$ is
 connected to a vertex in $\cH_N(t)$ by a new edge at time $t$
 then
\[
   c_N(t_+)-c_N(t_-) \geq \frac{\bar{N}}{N}, \quad \log(c_N(t_+))-
   \log(c_N(t_-))\geq \log \left( 1+ \frac{\bar{N}}{N c_N(t_-)} \right) \geq
   \frac{\log(2)\bar{N}}{N c_N(t_-)}
   \]
\begin{multline*}
\lim_{dt \to 0}\frac{1}{dt}
\condexpect{\log(c_N(t+dt))-\log(c_N(t))}{\cF_t}\geq \\
\frac{\log(2)\bar{N}}{N c_N(t)} \lim_{dt \to 0}\frac{1}{dt}
\condprob{c_N(t+dt)-c_N(t)\geq \frac{\bar{N}}{N} }{\cF_t}\geq \\
\frac{\log(2)\bar{N}}{N c_N(t)}\cdot
\frac{1}{N}\abs{\cC_N(v,t)}\left(\abs{\cH_N(t)}-
\abs{\cC_N(v,t)}\right)\ind_{\{t\ \leq \tau_b(v)\}}
\geq \\
\log(2)\bar{N} \cdot \left( w_N(t)-c_N(t)\right) \ind_{\{ t\ \leq \tau_b(v)\} } \geq \\
 \log(2)\bar{N}\frac{\rbeps_6}{4}\ind_{ \{ t \leq \tau\} }= N^{1/3} \frac{\log(2)}{8} \cdot C_2 \cdot C_3 \cdot
 (\rbeps_2)^2\cdot \ind_{ \{ t \leq \tau\} }=:a \cdot \ind_{ \{t \leq \tau \}}
\end{multline*}
Thus $\log(c_N(t))- a \cdot (t \wedge \tau)$ is a submartingale.  Using the
 optional sampling theorem we get
\[
\log(m_0(0))-a \cdot \expect{\tau}\geq \expect{\log(c_N(\tau))} -
a\cdot \expect{\tau} \geq
  \log(c_N(0)) \geq
 -\log(N)
 \]
By Markov's
 inequality  we obtain that for some constant $C<+\infty$
\[
\prob{\tau \leq C N^{-1/3} \rbeps_2^{-2} \log(N)} \geq \frac12
\]
if $N$ is sufficiently large.

If $\tau_g \leq \tau_b(v)$, then $\cC_N(v,\tau_g)
>\frac{\rbeps_6}{4} N $, so $\expect{\tau_b(v) -\tau_g} \leq
(N\mu(N)\lambda_{inf})^{-1} \frac{4}{\rbeps_6}$, which implies
\[
\prob{\tau_w \wedge \tau_b(v) \leq C N^{-1/3} \rbeps_2^{-2}
\log(N) + C^{\prime} (N\mu(N))^{-1} \rbeps_2^{-1}} \geq
\frac{1}{4}.
\]
for some constant $C^{\prime}$. Define  $\rbtfin$ of
\eqref{t_bla_def} with $C_4:=\max\{C,C^{\prime} \}$. Using the
linearity of expectation we get
\[\expect{z(\rbtfin)}=\expect{\frac{1}{N} \sum_{w \in \cH_N(0)}
\ind_{\{\tau_b(w) \leq \rbtfin\}} } \geq \rbeps_6 \prob{\tau_b(v)
\leq  \rbtfin}.
\]
The inequality
 $\ind_{\{ \tau_w \leq \rbtfin \}}\frac{\rbeps_6}{2}  \leq
 z(\rbtfin)$ follows from the definition of $\tau_w$.
\[
\frac{1}{4} \leq \prob{\tau_w \wedge \tau_b(v) \leq \rbtfin} \leq
\prob{ \tau_w \leq \rbtfin} + \prob{\tau_b(v) \leq \rbtfin} \leq
\expect{z(\rbtfin)} \frac{2}{\rbeps_6}+ \expect{z(\rbtfin)}
\frac{1}{\rbeps_6}
\]
From which \eqref{we_show_this_2} follows.
\end{proof}


 Lemma \ref{lemma:if_big_mass_big_fire_soon}. is a straightforward consequence
 of  Lemma \ref{lemma:almost_giant_or_lot_burnt}. and Lemma
\ref{lemma:if_almost_giant_then_big_fire_soon}.

\section{Properties of the solutions of the frozen percolation
equations}\label{section_properties_of_frozen_eqs}

\begin{proof}[Proof of Theorem  \ref{tauberian_smol}.]
It is clear from \eqref{crit_varphi_def_formula} and
\eqref{Lipschitz} that $\varphi(t)$ is continuous. In order to
prove \eqref{powerlaw_decay} we need Example (c) of Theorem 4. of
chapter XIII.5 of \cite{feller}. By (\ref{E_solution_translated})
\[\kderi{X}{u}(t,0)=\frac{1}{E(t,0)}=\frac{1}{E(0,t)}=\frac{1}{\varphi(t)}\]
\[X(t,u)=\frac{1}{2 \varphi(t)} u^2+ \Ordo(u^3),\quad \lim_{x \to 0}
\frac{-U(t,x)}{\sqrt{x}}=\sqrt{2 \varphi(t)}\]
By the
Tauberian theorem for any $t \geq \gt$  each of the relations
\[-U(t,x)\sim x^{1-1/2} \sqrt{2\varphi(t)} \quad \text{and} \quad
\sum_{k=K}^{\infty} v_k(t) \sim \frac{1}{\Gamma(\frac{1}{2})}
K^{1/2-1} \sqrt{2\varphi(t)} \] implies the other, that is for any $t \geq \gt$
\[ \lim_{x \to 0}
\frac{-U(t,x)}{\sqrt{x}}=\sqrt{2 \varphi(t)} \quad \iff \quad
\lim_{K \to
\infty} K^{\frac12}\sum_{k=K}^{\infty} v_k(t)=\sqrt{\frac{2
\varphi(t)}{\pi}}
\]
\end{proof}

In order to compare the solutions of (\ref{alter_smol_frozen_eq})
and (\ref{crit_smol_frozen_integral_eq}) we apply the
transformations \be \label{transformations_UXG} \conf{v}(t) \to
U(t,x) \to X(t,u) \to G(t,w) \ee to the solutions of the
alternating equations:

  The integral equation \be \label{alter_U_integral_eq} U(t,x)=
U(0,x)+\int_0^t -U(s,x)\deri{U}{x}(s,x)ds +\Phi(t)
 \ee
holds, but $\Phi(t)$ is constant between burning times and jumps
by $\theta(\bt_i)$ at $\bt_i$, which means that the giant
component is burnt:
\[\lim_{\varepsilon \to 0} -U(\bt_i+\varepsilon,0)=
  \lim_{\varepsilon \to 0}  \theta(\bt_i
+\varepsilon)= \theta(\bt_{i+})=0\] By Lemma
\ref{lemma_X_solution}. the formulae
(\ref{X_solution_inhom_subcrit}),
(\ref{G_solution_inhom_subcrit}), (\ref{F_solution_inhom_subcrit})
and (\ref{E_solution_translated}) are valid (with rate function
$\lambda(t)\equiv 0$).

 In between the burning times $\bt_i< t  \leq \bt_{i+1}$ we have
\[X(t,u)=X(\bt_{i+},u)+(t-\bt_i)u \quad \text{and}\quad
G(t,w)=G(\bt_{i+},w+(t-\bt_i)).\] If $t-\bt_i>w^*(\bt_{i+})$ then
$\conf{v}(t)$ is supercritical:
\[\deri{X}{u}(t,0)>0,\quad \theta(t)>0,\quad X(t,-\theta(t))=0, \quad
\deri{X}{u}(t,-\theta(t))<0.\] $\min_w G(t,w)=0$ still holds, but
$\text{argmin}_w G(t,w)=w^*(t) <0$ in the supercritical phase. Thus
$-\deri{X}{u}(t,0)=w^*(t)$ is well-defined for all $t \geq 0$ for
the solutions of the equations (\ref{subcrit_smol_frozen_integal_eq}),
(\ref{crit_smol_frozen_integral_eq}) and (\ref{alter_smol_frozen_eq}) as
well, moreover (\ref{x_star_and_X_G}) holds. For the solutions of (\ref{alter_smol_frozen_eq}) $w^*(t)$ is left-continuous.

By $G(t,w^*(t))\equiv 0$, (\ref{G_solution_inhom_subcrit}) and (\ref{Phi_and_F}) we get
\be\label{alter_int_Phi_formula}
\int_0^t \Phi(s)ds =G(0,t+w^*(t))-w^*(t)F(0,t+w^*(t))
\ee
for the solutions of (\ref{alter_smol_frozen_eq}).

If $\conf{v}(t)$ is the solution of
(\ref{subcrit_smol_frozen_integal_eq}), (\ref{crit_smol_frozen_integral_eq}) or
(\ref{alter_smol_frozen_eq}) started from
$\conf{v}(0) \in \myNz$, then
(\ref{E_solution_translated}) holds: the evolution of the critical core
 does not depend on the rate of lightnings. One extra parameter is needed to
determine $\conf{v}(t)$ and $\theta(t)$: if we
know $w^*(t)$, then \be \label{E_and_w_star_determines_everything}
F(t,w)=\int_{w^*(t)}^w E(0,t+y)dy \quad \text{ and }\quad
G(t,w)=\int_{w^*(t)}^w (w-y)E(0,t+y)  dy \ee
 has all the
information about $\conf{v}(t)$ and $\theta(t)$, since the
transformations (\ref{transformations_UXG}) are invertible (using
analytic extensions).

 \begin{proof}[Proof of Claim \ref{selfsimilarity}.]
 First assume $m_0(0)=1$.
As a consequence of Remark \ref{monodisperse_E_remark}. we can see
that \be \label{selfsimilar_core}
E(t,w)=E(0,t+w)=\frac{1}{(w+t)^2}=\frac{1}{t^2}E(1,\frac{w}{t}),\ee
but this is the critical core of $\frac{1}{t}\conf{v}(1)$, and
together with $w^*(t) \equiv 0$ for $t \geq \gt =1$ the identity
$v_k(t)=\frac{1}{t}v_k(1)$ follows.
 We get the explicit formula for $v_k(1)$ in the following way: since $X(1,u)=X(0,u)+u$,
 the inverse function of $V(1,x)$ is $-\log(v)+v-1$, thus
\[V(1,x)=-W\left(-e^{-(x+1)}\right)=\sum_{k=1}^{\infty} \frac{k^{k-1}}{k!} e^{-k}
e^{-kx}\] where $W$ is the Lambert W function, the inverse
function of $z \mapsto ze^z$.

 If $m_0(0)\neq 1$ but we still have
a monodisperse initial condition then \eqref{selfsimilar_core}
still holds and for $t \geq \frac{1}{m_0(0)}=\gt$ we have
$w^*(t)=0$ thus $v_k(t)=\frac{1}{t} \frac{k^{k-1}}{k!}e^{-k}$ must
hold.
\end{proof}

\begin{proof}[Proof of Theorem \ref{asymptotic_selfsimilar}.]

 Let $H(w):=F(0,w)-m_0(0)$, thus $H \left(-\frac{1}{V'(0,x)}\right)=-V(0,x)$ by
(\ref{F_U_E_U_formulas}). Using Lemma \ref{crit_eq_existence}. and
\eqref{F_solution_inhom_subcrit} we get
 \[F(t,w)=H(t+w)-H(t) \quad
\text{ and } \quad  m_0(t)=F(t,+\infty)=-H(t)\] for $t \geq \gt$.
 $v_1(0)>0$ implies $\lim_{x \to \infty} -\frac{V'(0,x)}{V(0,x)}=1$, so $\lim_{t \to \infty} t \cdot H(tw)=-\frac{1}{w}$, from which $\lim_{t \to \infty} t m_0(t)=1$ follows. Moreover
 \[1-\frac{1}{w+1}=\lim_{t \to \infty} t \cdot \left( H(t\cdot(w+1))-H(t)\right)=
\lim_{t\to \infty} t \cdot F(t,tw)=\lim_{t \to \infty}
\hat{F}(t,w) \] where $\hat{v}_k(t)=t v_k(t)$. This implies the
pointwise convergence of the monotone functions $\hat{X}'(t,u)$,
$\hat{X}(t,u)$, $\hat{U}(t,x)$ and $\hat{V}(t,x)$ to the desired
limit as $t \to \infty$. The convergence of $\hat{v}_k (t)$ to
$\frac{k^{k-1}}{k!}e^{-k}$ follows from the continuity theorem of
Laplace transforms.
\end{proof}

\begin{proof}[Proof of Theorem \ref{exp_tilt_rigidity}.]
 It is easy to check that if $\tilde{v}_k(t)=v_k(t) e^{-k x^*(t)}$,
  then $\tilde{V}(t,x)=V(t,x+x^*(t))$, so $\tilde{x}^*(t)=0$ and
   $\tilde{w}^*(t)=0$, but $\tilde{E}(t,w)=E(0,t+w)=E(t,w)$, so
    $\tilde{\conf{v}}(t)$ is identical to the solution of
    (\ref{crit_smol_frozen_integral_eq}) at time $t$.
\end{proof}

\begin{proof}[Proof of Theorem \ref{extremum_theorem}.]
 If we
consider the solution of (\ref{subcrit_smol_frozen_integal_eq})
with given  initial data and lightning rate function $\lambda(t)
\geq 0, 0\leq t\leq T$ then (\ref{G_solution_inhom_subcrit})
provides us with a relation between our cost ($\int_0^T
\lambda(t)dt$) and reward ($\int_0^T \Phi(t)dt$).

We prove \eqref{subcrit_functional_extremum} by considering the cases
$T\geq \gt$ and $T \leq \gt$ separately.

 According to
(\ref{kritikus_celfuggveny}), for $T\geq \gt$ we get \[0 \leq
G^{sub}(T,0)=\int_0^T \Phi^{crit}(t)dt -\int_0^T \Phi^{sub}(t)dt+\int_0^T
\lambda(t)dt\] by substituting $w=0$ into
(\ref{G_solution_inhom_subcrit}).

 For $T \leq \gt$, we want to
prove $0 \geq \int_0^T \Phi^{sub}(t)dt -\int_0^T \lambda(t)dt$.
Substitute $w=\gt-T$ into (\ref{G_solution_inhom_subcrit}). Since
$G(0,\gt)=0$ and $(\gt-T)\Phi^{sub}(T) \geq 0$ we get \[0 \leq
G^{sub}(T,\gt-T)\leq -\int_0^T \Phi^{sub}(t)dt +\int_0^T
\lambda(t)dt.\]

 The proof of the extremum
property (\ref{alter_functional_extremum})  is equally simple.
\end{proof}

If we want to maximize our cost functional for a fixed $T>\gt$, the
optimal control is not unique, since the only thing we need for
\be \label{equality_in_extemum_problem}\int_0^T \Phi^{sub}(t)dt-\int_0^T
\lambda(t)dt=\int_0^T \Phi^{crit}(t)dt \ee
 to hold is $G^{sub}(T,0)=0$: if
$\conf{v}(T)$ is critical at time $T$, then
the value of the functional is optimal.

\begin{proof}[Proof of Remark \ref{sharp}.] In order to prove
(\ref{subcritical_functional_sharp})  first pick an arbitrary
$\lambda>0$ and solve (\ref{subcritical_control_ODE}) with
constant $\lambda(t)=\lambda$. Since $w^*(t)>0$ and $w^*(0)=\gt$
there is a $0< t^*\leq T$ such that $w^*(t^*)=T-t^*$, and the
lightning rate function $\lambda(t)=\lambda \cdot \ind \lbrack t
\leq t^* \rbrack$ makes $T$ a critical time,  so
(\ref{equality_in_extemum_problem}) holds, thus
\eqref{subcritical_functional_sharp}.

 Now we prove (\ref{alternating_functional_sharp}). By using
 (\ref{alter_int_Phi_formula}) we have to show that
 \[ G(0,T+w^*(T))-(w^*(T)-\varepsilon)
 F(0,T+w^*(T))>\\G(0,T)+\varepsilon F(0,T) \]
Using $G(0,T+w^*(T))-G(0,T) > w^*(T)F(0,T)$ it is easy to see that
$0<w^*(T)\leq \varepsilon$ is sufficient for this to hold. If
there is a $\gt<t^*\leq T$ such that
$-\deri{X}{u}(t^*,-\theta(t^*))=T-t^* +\varepsilon$, then burning
the giant component at time $t^*$ we get
$-\deri{X}{u}(t^*_+,0)=T-t^* +\varepsilon$ and
$-\deri{X}{u}(T,0)=w^*(T)=\varepsilon$. If not, then burning at
time $T$ yields $0<-\deri{X}{u}(T,-\theta(T))=
w^*(T_+)<\varepsilon$.
\end{proof}

\section{Proof of the subcritical limit
theorem}\label{section_proof_of_sub}

In order to prove Theorem \ref{subcrit_limit_thm_statement}.,
 we need to know more about the solution of (\ref{subcritical_control_ODE}).

\begin{lemma}\label{lemma_Lambert_W}
If $y(t)$ is the solution of the differential equation
$\dot{y}(t)=\frac{c}{y(t)}-1$ with initial condition $y(0)=\gt$
and $t \geq \gt+c\log(\frac{\gt}{c})$ then $y(t) \leq 2c$.
\end{lemma}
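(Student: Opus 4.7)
The plan is to exploit the fact that $c$ is the unique positive equilibrium of the ODE, with $\dot y > 0$ when $0 < y < c$ and $\dot y < 0$ when $y > c$. If $\gt \leq 2c$ a monotonicity argument shows that $y(t)$ stays in $[\min(\gt,c), \max(\gt,c)] \subseteq (0, 2c]$ for all $t \geq 0$, so the result is immediate; hence the only interesting case is $\gt > 2c$, in which $y(t) > c$ for all $t$ and $y$ is strictly decreasing toward $c$.

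For that case I would separate variables. Rewriting the ODE as $\frac{y}{y-c}\,dy = -dt$ and using the partial-fraction identity $\frac{y}{y-c} = 1 + \frac{c}{y-c}$, integrating from $0$ to $t$ yields the explicit relation
\[ t \; = \; \gt - y(t) + c\log\!\left(\frac{\gt - c}{y(t) - c}\right). \]
Specializing to $y(t) = 2c$ gives the exit time $t_{2c} = \gt - 2c + c\log\!\left(\frac{\gt - c}{c}\right)$, and since $y$ is strictly decreasing in this regime, $y(t) \leq 2c$ for every $t \geq t_{2c}$.

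It remains to check that the threshold stated in the lemma is at least $t_{2c}$: after cancelling $\gt$ and $c\log c$, the required inequality $\gt + c\log(\gt/c) \geq t_{2c}$ reduces to
\[ 2 \; \geq \; \log\!\left(\frac{\gt - c}{\gt}\right), \]
which is trivial since the right-hand side is negative. I do not anticipate any serious obstacle: the only non-routine step is the separation of variables, which is essentially automatic in one variable, and the final comparison is an elementary estimate on a logarithm.
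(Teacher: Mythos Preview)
Your argument is correct. The paper takes a slightly different route: it writes down the closed-form solution
\[
y(t)=c\Bigl(1+W\!\Bigl(e^{(\gt-t)/c-1}\,(\gt/c-1)\Bigr)\Bigr)
\]
in terms of the Lambert $W$ function and then simply invokes the bound $W(x)\le x$ to conclude. Your separation-of-variables computation is of course equivalent---inverting the implicit relation $t=\gt-y+c\log\!\bigl((\gt-c)/(y-c)\bigr)$ is exactly how one arrives at the Lambert~$W$ formula---but by extracting the exit time $t_{2c}$ directly and comparing it to the stated threshold you avoid any reference to special functions, and you also make the easy case $\gt\le 2c$ explicit, which the paper's formula handles only implicitly (and somewhat delicately, since one has to worry about the branch of $W$ when $\gt<c$). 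The paper's version is terser once the closed form is granted; yours is more elementary and self-contained.
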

\begin{proof}
 The solution of this differential equation is
\be \label{Lambert_megoldokeplet} y(t)=c \cdot \left(1+W
\left(\exp\left(\frac{\gt-t}{c}-1\right)\cdot
\left(\frac{\gt}{c}-1\right)\right)\right) \ee where $W$ is the
Lambert W function.
Thus $W(x)\leq x$ and our claim follows.
\end{proof}



\begin{lemma}\label{subcrit_w_star_almost_phi}
If $w^*(t)$ is the solution of \eqref{subcritical_control_ODE}
with constant $\lambda(t)\equiv \lambda \leq 1$ then there exist
$d_1$ and $d_2$ which depend only on $\conf{v}(0)$ and $T$ such
that
 \[ \gt+d_1 \lambda \log(\frac{1}{\lambda})\leq t \leq T \quad
 \implies \quad
\abs{w^*(t)-\frac{\lambda}{E(0,t)}} \leq d_2 \lambda^2.\]
\end{lemma}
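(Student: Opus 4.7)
The plan is to split the analysis into two phases: a short transient phase, in which $w^*(t)$ decays from its initial value $\gt$ down to the scale $\lambda$; and a near-equilibrium phase, in which the ODE contracts $w^*(t)$ onto the slowly-moving quasi-equilibrium $\lambda/E(0,t)$.

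First I would establish that $w^*(t) = \Theta(\lambda)$ for all $t \geq t_0 := \gt + C_1 \lambda \log(1/\lambda)$. The upper direction comes from bounding $E(0, t+w^*(t))$ from below by $\underline{E} := \phiinf(T+w^*_{max})$ on $[0,T]$, which turns \eqref{subcritical_control_ODE} into the differential inequality $\dot w^*(t) \leq \lambda/(\underline{E}\, w^*(t)) - 1$. A standard comparison with the autonomous ODE of Lemma \ref{lemma_Lambert_W} applied at $c = \lambda/\underline{E}$ gives $w^*(t) \leq 2\lambda/\underline{E}$ once $t - \gt \geq c\log(\gt/c)$, a quantity which is bounded by $C_1\lambda\log(1/\lambda)$ for suitable $C_1$; a forbidden-region argument (at $w^* = 2\lambda/\underline{E}$ the right-hand side of \eqref{subcritical_control_ODE} is $\leq -1/2$) preserves this bound for all later times. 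The lower direction $w^*(t) \geq \lambda/\phisup$ is already provided by the proof of Lemma \ref{subcritical_existence_lemma}.

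Next I would linearize around $g(t) := \lambda/E(0,t)$. Writing $\eta(t) := w^*(t) - g(t)$ and exploiting $g(t) E(0,t) = \lambda$ together with the Lipschitz bound \eqref{Lipschitz}, the numerator $\lambda - w^*(t)\, E(0, t+w^*(t))$ decomposes as $-\eta(t)\, E(0, t+w^*(t))$ plus a remainder of size $g(t) \cdot D\, w^*(t) = O(\lambda^2)$. Dividing by $w^*(t)\, E(0, t+w^*(t)) = \Theta(\lambda)$ and subtracting $\dot g(t) = O(\lambda)$ produces the linear ODE with source $\dot\eta(t) = -\eta(t)/w^*(t) + O(\lambda)$. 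Since $1/w^*(t) \geq \underline{E}/(2\lambda)$ on $[t_0,T]$, a Gronwall/Young argument applied to $\eta^2$ yields
\[
\eta^2(t) \leq \eta^2(t_0)\, \exp\bigl(-\tfrac{\underline{E}}{2\lambda}(t-t_0)\bigr) + C'\lambda^4.
\]
After an additional damping window of length $O(\lambda\log(1/\lambda))$ (which is absorbed into a larger $d_1$), the transient term drops below $\lambda^4$, so that $|\eta(t)| \leq d_2 \lambda^2$ as required.

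The main obstacle is the dimensional book-keeping in the linearization: the $O(\lambda)$ source in the linear ODE for $\eta$ is dangerously close in size to the target bound $\eta = O(\lambda^2)$, and one must recognize that it is precisely the fast restoring rate $1/w^*(t) = \Theta(1/\lambda)$ that reduces the steady-state fluctuation from $O(\lambda)$ down to $O(\lambda^2)$. Getting this to work requires that the remainder in the expansion of the numerator be genuinely quadratic in $\lambda$, which is why the Lipschitz bound \eqref{Lipschitz} of Lemma \ref{lemma_upper_lower_bound_on_E} (and not merely continuity of $E$) is essential.
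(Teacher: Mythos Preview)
Your proof is correct and follows essentially the same two-phase strategy as the paper: first use the differential inequality \eqref{upper_lower_bound_of_sc_control_ode_with_lambert} and Lemma~\ref{lemma_Lambert_W} to bring $w^*(t)$ down to scale $\Theta(\lambda)$ after time $\gt+O(\lambda\log(1/\lambda))$, then exploit the fast restoring rate $1/w^*(t)=\Theta(1/\lambda)$ to contract onto the quasi-equilibrium. The only cosmetic difference is the choice of deviation variable: the paper works with the dimensionless quantity $z(t):=w^*(t)E(0,t+w^*(t))/\lambda-1$, for which the ODE \eqref{z_ODE} is \emph{exact} (the source term is precisely $E'/E$, bounded by $D/\phiinf$), and then passes to $w^*(t)-\lambda/E(0,t)$ in a separate final step; you instead linearize directly in $\eta(t)=w^*(t)-\lambda/E(0,t)$, which costs you an extra $O(\lambda)$ remainder in the ODE itself but saves the final step. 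Both routes use \eqref{Lipschitz} in the same essential way and yield the same bound.
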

\begin{proof}
We have a uniform a priori bound $w^*(t)\leq w^*_{max}$ for all
$\lambda \leq 1$ depending only on the initial data and $T$ by
\eqref{w_min_w_max}. Thus by Lemma
\ref{lemma_upper_lower_bound_on_E} we have
\[0<\phiinf:=\phiinf(T+w^*_{max}) \leq E(0,t+w^*(t))\] and
substituting this inequality into \eqref{subcritical_control_ODE}
we get
 \be
\label{upper_lower_bound_of_sc_control_ode_with_lambert}
\dot{w}^*(t) \leq \frac{\lambda}{w^*(t)\phiinf}-1 \ee
 Using Lemma
\ref{lemma_Lambert_W}. we get \[\hat{t} := \gt+
\frac{\lambda}{\phiinf} \log(\frac{\phiinf \gt}{\lambda}) \leq
t\leq T \quad \implies \quad w^*(t) \leq 2
\frac{\lambda}{\phiinf}.\] Define
\[z(t):=\frac{w^*(t)E(0,t+w^*(t))}{\lambda} -1\] Using
(\ref{subcritical_control_ODE}) we get \be \label{z_ODE}
\dot{z}(t)=-\frac{1}{w^*(t)}z(t)+\frac{\deri{E}{w}(0,t+w^*(t))}{E(0,t+w^*(t)}
\ee
 For
$\hat{t}\leq t\leq T$ we have
 \be \label{z_ODE_inequalities} -1\leq z(\hat{t})\leq 2 \frac{\phisup}{\phiinf}, \quad
  \frac{1}{w^*(t)} \geq \frac12
\frac{\phiinf}{\lambda},\quad
\abs{\frac{\deri{E}{w}(0,t+w^*(t))}{E(0,t+w^*(t)}}\leq
\frac{D}{\phiinf}\ee with the D of (\ref{Lipschitz}). Solving the
linear ODE \eqref{z_ODE} and using the inequalities
\eqref{z_ODE_inequalities} we get
\[\abs{z(t)} \leq 2 \frac{\phisup}{\phiinf} \exp\left(-\frac12
\frac{\phiinf}{\lambda}(t-\hat{t})\right)+\lambda
\frac{2D}{\phiinf^2}.\] Thus for $t \geq \hat{t}+
\frac{2}{\phiinf} \lambda \log\left(\frac{1}{\lambda}\right)$ we
have $\abs{z(t)} =\Ordo(\lambda)$, which implies
\[w^*(t)-\frac{\lambda}{E(0,t+w^*(t))}=\Ordo(\lambda^2).\] If we
combine this with \[\abs{
\frac{\lambda}{E(0,t+w^*(t))}-\frac{\lambda}{E(0,t)}} \leq
\lambda^2 2\frac{D}{\phiinf^3}\] the claim of the Lemma follows.
\end{proof}
From this $\lambda
m_1^{\lambda}(t)-E(0,t)=\varphi_{\lambda}(t)-\varphi_{crit}(t)=\Ordo(\lambda)$
follows which proves \eqref{subcrit_control_almost_crit}. Now we
prove \eqref{subcrit_limit_thm_distribution_formulation} using
Laplace transforms:

\begin{lemma}\label{lemma_subcrit_limi_thm}
Let $U_{\lambda}(t,x)$ be the solution of (\ref{inhom_subcrit_U})
with a fixed initial condition $U(0,x)$ obtained from
$\conf{v}(0) \in \myNz$  and
$\lambda(t)\equiv \lambda$.
 Then for any $t>\gt$ we have \be
\label{limit_thm_Laplace} \lim_{\lambda \to 0}
\frac{\deri{U}{x}_{\lambda}\left(t, \frac{\lambda^2}{2 E(0,t)}x
\right)}{\deri{U}{x}_{\lambda}\left(t, 0\right)
}=\frac{1}{\sqrt{1+x}} \ee
\end{lemma}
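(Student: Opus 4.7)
My plan is to pass from the variable $x$ to the dual variable $w$ via the transformations of Section~\ref{section_def_transf}, in which the ratio of interest has a simple explicit form. Since $X(t,\cdot)$ inverts $U(t,\cdot)$ we have $\deri{U}{x}(t,x) = 1/\deri{X}{u}(t, U(t,x))$; using the definition of $F$ (which gives $\deri{X}{u}(t, -F(t,w)) = -w$), this becomes $\deri{U}{x}(t, x) = -1/w$, where $w$ is determined by $x = y(w) := X(t, -F(t,w))$. In the subcritical regime $\theta(t) \equiv 0$, so $U(t,0) = 0$ and $X(t,0)=0$, and the value of $w$ corresponding to $x=0$ is $w^*(t)$. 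Therefore
\[
\frac{\deri{U}{x}_{\lambda}(t, y(w))}{\deri{U}{x}_{\lambda}(t, 0)} \;=\; \frac{w^*(t)}{w}.
\]

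Next I would derive an explicit formula for $y(w)$. A direct computation using $\deri{X}{u}(t,-F(t,w))=-w$, $\deri{F}{w}=E$ from \eqref{def_critical_core}, and the translation identity \eqref{E_solution_translated} yields $dy/dw = w\,E(0, t+w)$. Since $F(t, w^*(t))=0$ gives $y(w^*(t)) = X(t,0)=0$, integration produces
\[
y(w) \;=\; \int_{w^*(t)}^{w} w'\, E(0, t+w')\, dw'.
\]

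The remaining task is to invert this relation asymptotically at the scale $y = \lambda^2 x / (2 E(0,t))$. By Lemma~\ref{subcrit_w_star_almost_phi}, $w^*(t) = \lambda/E(0,t) + O(\lambda^2)$, so I expect $w$ to be $O(\lambda)$. I would first confirm this with a sandwich argument: the two-sided bounds \eqref{upper_lower_bound_on_E} bracket the integrand, and the strict monotonicity $dy/dw = w\,E(0,t+w) > 0$ for $w>0$ gives uniqueness of $w$ and matching $\lambda$-linear upper and lower bounds. Once $w = O(\lambda)$ is in hand, the Lipschitz estimate \eqref{Lipschitz} replaces $E(0, t+w')$ by $E(0, t) + O(\lambda)$ uniformly for $w' \in [w^*(t), w]$, so
\[
y(w) \;=\; \tfrac{1}{2} E(0, t)\bigl(w^2 - w^*(t)^2\bigr) + O(\lambda^3).
\]
Setting this equal to $\lambda^2 x / (2 E(0,t))$ and substituting $w^*(t)^2 = \lambda^2/E(0,t)^2 + O(\lambda^3)$ gives $w^2 = \lambda^2(1+x)/E(0,t)^2 + O(\lambda^3)$, hence $w^*(t)/w \to 1/\sqrt{1+x}$ as $\lambda \to 0$, which is precisely \eqref{limit_thm_Laplace}.

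The main obstacle is the bootstrap step: before the Lipschitz replacement of $E(0, t+w')$ by $E(0,t)$ is legitimate, one must already know that $w$ lies in an $O(\lambda)$ window. I would close this by running the sandwich with the crude bounds $\frac{m_1(0)}{m_2(0)}(t+w)^{-2} \le E(0,t+w) \le (t+w)^{-2}$ of Lemma~\ref{lemma_upper_lower_bound_on_E} (which hold uniformly in $\lambda$), translating the integral equation $y(w) = \lambda^2 x/(2E(0,t))$ into matching $\lambda$-linear lower and upper bounds for $w$, after which the Lipschitz refinement provides the sharp leading order.
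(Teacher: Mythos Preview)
Your argument is correct and is essentially the paper's proof, reorganized. Both proofs rest on the same two ingredients: the identity $\deri{U}{x}(t,x)=1/\deri{X}{u}(t,U(t,x))$ and the asymptotic $w^*(t)=\lambda/E(0,t)+O(\lambda^2)$ from Lemma~\ref{subcrit_w_star_almost_phi}, and both amount to a second-order expansion of $X(t,\cdot)$ near $u=0$ followed by an inversion producing the square root. The paper works in the $u$-variable, expanding $X(t,u)=-u\,w^*(t)+u^2/(2E)+O(u^3)$ and solving explicitly for $U$ and $U'$; you instead parametrize by $w=-\deri{X}{u}(t,u)$, which makes the ratio equal to $w^*(t)/w$ at once and packages the second-order information into the exact integral $y(w)=\int_{w^*(t)}^{w}w'\,E(0,t+w')\,dw'$. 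Your formulation has the modest advantage that the error control is transparent: the Lipschitz bound \eqref{Lipschitz} handles the replacement of $E(0,t+w')$ by $E(0,t)$ directly, whereas the paper's $O(u^3)$ remainder implicitly relies on a bound for $X'''$. The bootstrap you flag is indeed needed but is routine, since $y$ is strictly increasing with $y(w)\to\infty$ and $y(w)=\lambda^2 x/(2E(0,t))\to 0$, so $w\to w^*(t)\to 0$ and the crude bounds of Lemma~\ref{lemma_upper_lower_bound_on_E} already force $w=O(\lambda)$ before the refinement.
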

\begin{proof} Fix $\lambda>0$ and denote the solution of
(\ref{inhom_subcrit_U}) with $\lambda(t)\equiv \lambda$ by
$U(t,x)$.
 For all $t\geq 0$  we have
\[ \kderi{X}{u}(t,u) \geq \frac{1}{\phisup}\quad \implies\quad X(t,u)\geq
\frac{1}{2\phisup}u^2 \quad \implies \quad \abs{U(t,x)}=
\Ordo(\sqrt{x}).\]

We use the shorthand notation $E=E(0,t+w^*(t))$.
\[\deri{X}{u}(t,u)=-w^*(t)+\frac{u}{E}+\Ordo(u^2), \quad X(t,u)=-u
w^*(t)+\frac{u^2}{2 E}+\Ordo(u^3)\]
\begin{multline*}U(t,x)=E
w^*(t)-\sqrt{\left(E w^*(t)\right)^2+2E (x-\Ordo(U(t,x)^3))}=\\
E w^*(t)-\sqrt{\left(E w^*(t)\right)^2+2E x}+\Ordo(x)
\end{multline*}
\begin{multline*}\deri{U}{x}(t,x)=\frac{1}{\deri{X}{u}(t,U(t,x))}=\frac{1}{-w^*(t)+\frac{U(t,x)}{E}}+\Ordo(1)=\\
\frac{-1}{\sqrt{w^*(t)^2+\frac{2}{E}x}
+\Ordo(x)}+\Ordo(1)=\frac{-1}{\sqrt{w^*(t)^2+\frac{2}{E}x}}+\Ordo(1)
\end{multline*}
Because of Lemma \ref{subcrit_w_star_almost_phi}. we have
\[\lim_{\lambda \to 0}
\frac{\lambda^2}{E(0,t+w^*_{\lambda}(t))E(0,t)
w^*_{\lambda}(t)^2}=1\] from which the claim of this lemma
follows.

\end{proof}
 The r.h.s. of (\ref{limit_thm_Laplace}) is the Laplace transform of
 the $\Gamma(\frac12, 1)$ distribution
 and the r.h.s. of
 (\ref{subcrit_limit_thm_distribution_formulation}) is the distribution function
  of the $\Gamma(\frac12, 1)$ distribution, so
   \eqref{subcrit_limit_thm_distribution_formulation} follows from the
continuity theorem of Laplace transforms.
\begin{proof}[Proof of Theorem
\ref{subcrit_converges_to_crit}] First observe that instead of
proving uniform convergence of $\Phi_n$ to $\Phi_{crit}$ we only
need to show convergence on $\lbrack 0, T \rbrack$ for any $T$,
because \[T \geq \gt \; \implies \;
m_0(T)=\int_{T+w^*(T)}^{\infty} E(0,w)dw \leq \int_{T}^{\infty}
\frac{1}{w^2}dw =\frac{1}{T}\] by (\ref{upper_lower_bound_on_E}),
thus $0 \leq \Phi_n(t)-\Phi_{crit}(t)\leq \frac{1}{T}$ for $t\geq
T$. If we prove that $w^*(t)$ is small for $t \geq
\frac{1}{m_1(0)}$ then we are done by (\ref{Phi_and_F}) and Lemma
\ref{crit_eq_existence}, since \be
\label{if_w_star_is_small_then_Phi_is_close_to_Phi_crit} 0\leq
\Phi_n(t)-\Phi_{crit}(t)= F(0,t+w_n^*(t))-F(0,t) \leq w_n^*(t)
\phisup \quad t\geq \gt \ee \[\Phi_n(t)\leq
\Phi_n(\gt)=F(0,\gt+w_n^*(\gt)) \leq w_n^*(\gt) \phisup \quad t
\leq \gt \]

 We can give an upper bound on $w^*(t)$ for $t\geq \gt$
if we replace $\lambda(t)$ with $\lambda_{sup}$ in
(\ref{subcritical_control_ODE}): using
(\ref{upper_lower_bound_of_sc_control_ode_with_lambert}) we get
$w^*(t)=\Ordo \left( \lambda \log(\frac{1}{\lambda})\right)$ if we
substitute $t \geq \gt$ and $c=\frac{\lambda_{sup}}{\phiinf}$
 into (\ref{Lambert_megoldokeplet}), thus $\lim_{n \to \infty}w_n^*(t)=0$
  uniformly for $ \gt \leq t \leq T$.

We obtain $\lim_{n \to \infty} v_k^n(t)=v_k(t)$ for $k=1,2,\dots$
by the uniform convergence of $m_0^n(t)$ and $\lambda^n(t)$  to
the critical $m_0(t)=m_0(0)-\Phi(t)$ and $\lambda(t)\equiv 0$ in
(\ref{subcrit_smol_frozen_integal_eq}).
\end{proof}

\section{Proof of the alternating limit
theorem}\label{section_proof_of_alt}

We turn our attention to the proof of Theorem
\ref{alternating_converges_to_crit}. and Theorem
\ref{alter_limit_thm}.

In this section we assume $m_0(0)=1$ but the results generalize
easily to the $m_0(0)\neq 1$ case, since if $\conf{v}(t)$ is the
solution of \eqref{alter_smol_frozen_eq} with burning times
$\bt_1,\bt_2,\dots$ then $m_0(0)\conf{v}(m_0(0)t)$ is also a
solution of \eqref{alter_smol_frozen_eq} with burning times
$\frac{\bt_1}{m_0(0)},\frac{\bt_2}{m_0(0)},\dots$

\begin{definition}
If $\conf{v}(t)$ is a solution of
 (\ref{alter_smol_frozen_eq}), let $w^*_+(t):=\frac{1}{m_1(t)}$.

 If $w^*(t)\geq 0$ then $w^*_+(t)=w^*(t)$, but if
 $w^*(t)<0$ then $w^*_+(t)=-X'(t,-\theta(t))$.
\end{definition}
If $t$ is a burning time then $w^*(t_+):=\lim_{\varepsilon \to 0}w^*(t+\varepsilon)=  w^*_+(t)$.

\begin{lemma}\label{lemma_alter_bounds}
We consider the solution of (\ref{alter_smol_frozen_eq}) on
 $\lbrack 0, T \rbrack$  with an arbitrary sequence of
 burning times. If $\gt \leq t \leq T$ and $w^*(t)<0$ then
\be \label{global_theta_lower_bound} \theta(t) \geq
\frac{m_1(0)}{m_2(0)} \frac{1}{T^2} \abs{w^*(t)}
 \ee
 \be
\label{global_alter_wstar_bound}  w^*_+(t)\leq 4 \sqrt{\frac{m_2(0)}{m_1(0)}}
 \exp \left(\frac{m_2(0)}{m_1(0)}T+1 \right)\cdot
 \abs{w^*(t)}=:C(T,\conf{v}(0))\abs{w^*(t)}
 \ee
 If  $w^*(t)<0$ and if \be
 \label{condition_w1_w2}
  w_1 +\abs{w^*(t)} \leq t
 \leq
 w_2-\sqrt{\frac{\phisup(w_1,w_2)}{\phiinf(w_1,w_2)}}\abs{w^*(t)}\ee
holds then \be \label{alternating_flip_explicit_upper_lower_bound}
- \sqrt{\frac{\phiinf(w_1,w_2)}{\phisup(w_1,w_2)}}w^*(t)\leq
w^*_+(t) \leq -
\sqrt{\frac{\phisup(w_1,w_2)}{\phiinf(w_1,w_2)}}w^*(t) \ee
 \be \label{upper_lower_bound_w_star_theta} -2 \phiinf(w_1,w_2)w^*(t) \leq
\theta(t)\leq- 2 \phisup(w_1,w_2)w^*(t)  \ee
\end{lemma}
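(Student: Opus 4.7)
The plan is to work in the transformed coordinates $U,X,G,F,E$ from Section \ref{section_def_transf}, where between burning times the translation formulas \eqref{F_solution_inhom_subcrit}, \eqref{E_solution_translated} hold with vanishing rate function. Write $p:=w^*_+(t)\ge 0$ and $q:=-w^*(t)>0$. The relevant facts about the supercritical geometry are that $G(t,w^*(t))=0$, $F(t,w^*(t))=0$, $F(t,w^*_+(t))=\theta(t)$, and (using $X(t,-\theta(t))=0$ from the supercritical bullet list) $G(t,w^*_+(t))=w^*_+(t)\theta(t)$.

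First I would derive a single key identity that drives every bound. Integrating $F'(t,\cdot)=G''(t,\cdot)=E(t,\cdot)=E(0,t+\cdot)$ twice yields
\begin{align*}
\theta(t)&=F(t,p)-F(t,-q)=\int_{-q}^{p}E(0,t+y)\,dy,\\
p\,\theta(t)&=G(t,p)-G(t,-q)=\int_{-q}^{p}(p-y)\,E(0,t+y)\,dy,
\end{align*}
and subtracting gives the central relation
\[
\int_{-q}^{p} y\,E(0,t+y)\,dy=0,\qquad\text{i.e.}\qquad \int_{t-q}^{t+p}(u-t)\,E(0,u)\,du=0,
\]
so that the measure $E(0,u)\,du$ on $[t-q,t+p]$ has barycentre exactly $t$.

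For the local bounds I would use that condition \eqref{condition_w1_w2} forces $[t-q,t+p]\subset[w_1,w_2]$, on which $\phi_{\inf}\le E(0,\cdot)\le\phi_{\sup}$. Rewriting the key identity as $\int_0^p y E(0,t+y)\,dy=\int_0^q y E(0,t-y)\,dy$ and sandwich-estimating both sides gives $\phi_{\inf}\max(p,q)^2\le\phi_{\sup}\min(p,q)^2$, which is \eqref{alternating_flip_explicit_upper_lower_bound}. The bounds on $\theta(t)$ in \eqref{upper_lower_bound_w_star_theta} then come from the same identity viewed as "density $f:=E(0,t-q+z)$ on $[0,p+q]$ with mean $q$". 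A short linear-programming argument (the extremal density is piecewise constant, and by \eqref{alternating_flip_explicit_upper_lower_bound} the extremiser saturates one of the $\phi$-bounds on a subinterval of length $2\min(p,q)$) gives $2\phi_{\inf}\min(p,q)\le\theta(t)\le 2\phi_{\sup}\min(p,q)$; combining with $\min(p,q)\le q$ and, in the lower bound, the case split provided by \eqref{alternating_flip_explicit_upper_lower_bound}, yields \eqref{upper_lower_bound_w_star_theta}.

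The two global bounds use Lemma \ref{lemma_upper_lower_bound_on_E} ($c_0/u^2\le E(0,u)\le 1/u^2$ with $c_0=m_1(0)/m_2(0)$), together with a crucial a priori estimate $t-q\ge \gt=1/m_1(0)$. The latter follows because $t+w^*(t)$ is constant between burning times and at every burning $\bt_i$ the post-burn value $w^*_+(\bt_i)>0$ gives $\bt_i+w^*_+(\bt_i)>\bt_i\ge \gt$. For \eqref{global_theta_lower_bound} I would take the key identity in the form $t\theta(t)=\int_{t-q}^{t+p} u E(0,u)\,du\ge c_0\log((t+p)/(t-q))\ge c_0(p+q)/(t+p)$ and then use $t,q<t\le T$ to conclude $\theta(t)\ge c_0\,q/T^2$. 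For \eqref{global_alter_wstar_bound} I would combine the upper bound $\theta\le\int_{t-q}^{t+p}du/u^2=(p+q)/((t-q)(t+p))$ with the lower bound $t\theta\ge c_0\log((t+p)/(t-q))$; writing $\alpha=(t+p)/(t-q)$ this simplifies to the transcendental inequality $c_0(t-q)\alpha\log\alpha\le t(\alpha-1)$. Using $t-q\ge \gt=1/m_1(0)$ and $t\le T$ this forces $\log\alpha$ to be $O((m_2(0)/m_1(0))T)$, from which $p\le C(T,\conf{v}(0))\,q$ with the claimed exponential constant follows.

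The main obstacle will be \eqref{global_alter_wstar_bound}: without the a priori estimate $t-q\ge\gt$ the ratio $p/q$ is genuinely unbounded (one verifies this on the exactly solvable case $E(0,u)=c/u^2$), and squeezing the exact constant $4\sqrt{m_2(0)/m_1(0)}\exp((m_2(0)/m_1(0))T+1)$ out of the transcendental inequality $\alpha\log\alpha\lesssim(\alpha-1)$ is more delicate than a standard Gronwall bound and requires separating an initial $\alpha\le e$ regime from an $\alpha\gg 1$ regime.
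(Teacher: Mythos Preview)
Your barycentre identity $\int_{-q}^{p} y\,E(0,t+y)\,dy=0$ is exactly what the paper uses as well (they phrase it as: the positive root of $f(w):=G(t,0)-\int_0^w yE(t,y)\,dy$ is $p$), and your derivations of \eqref{global_theta_lower_bound} and \eqref{alternating_flip_explicit_upper_lower_bound} are essentially the paper's argument. Two minor remarks: the claim ``condition \eqref{condition_w1_w2} forces $[t-q,t+p]\subset[w_1,w_2]$'' is circular since $p$ is what you are bounding --- the paper avoids this by testing $f$ at the specific point $w_0=\sqrt{\phisup/\phiinf}\,q$ (for which $t+w_0\le w_2$ is guaranteed by \eqref{condition_w1_w2}) and using that $f$ is strictly decreasing on $(0,\infty)$. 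For \eqref{upper_lower_bound_w_star_theta} the paper's route is shorter than your LP argument: since $\min_w G(t,w)=0$ one has $X(t,0)=0$, so Taylor at $u=0$ with $X'(t,0)=q$ and $1/\phisup\le X''\le 1/\phiinf$ gives $qu+u^2/(2\phisup)\le X(t,u)\le qu+u^2/(2\phiinf)$; evaluating at $u=-\theta$ (where $X=0$) yields both bounds immediately.

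There is, however, a genuine gap in your plan for \eqref{global_alter_wstar_bound}. Your transcendental inequality $c_0(t-q)\alpha\log\alpha\le t(\alpha-1)$ does \emph{not} force $p\le Cq$. Take $q\ll t$ (so $t/(t-q)\approx 1$) and any fixed $c_0<1$: then the inequality reads $c_0\,\alpha\log\alpha\lesssim \alpha-1$, which for $\alpha$ near $1$ becomes $c_0\lesssim 1$ and hence permits $\alpha-1$ up to the fixed positive number $2(1-c_0)/c_0$. That gives $p=(\alpha-1)(t-q)-q$ of order $t$, not of order $q$. (In the monodisperse case $c_0=1$ your inequality is an equality and does give $p\approx q$; the leak is precisely the slack between the $1/u^2$ upper bound and the $c_0/u^2$ lower bound on $E$.) The paper does not route through $\theta$ at all for this step: it works directly with the second-moment identity $\int_0^p yE(0,t+y)\,dy=\int_0^q yE(0,t-y)\,dy$ and makes a case split on $q/t$. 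When $q/t\le\tfrac14\sqrt{c_0}$ a pointwise comparison of the two integrands (after rescaling the $p$-integral to $[0,q]$) gives $p\le 2\sqrt{1/c_0}\,q$. When $q/t>\tfrac14\sqrt{c_0}$ one uses the crude bound $G(t,0)=\int_{-q}^0 F(t,y)\,dy\le q\cdot m_0(t)\le q$, and then the lower bound $c_0\int_0^w y/(t+y)^2\,dy$ on $|f(w)-f(0)|$ combined with $t\le 4\sqrt{1/c_0}\,q$ produces the exponential constant. Your two regimes ``$\alpha\le e$'' versus ``$\alpha\gg 1$'' do not capture this --- the problematic regime is $\alpha$ close to $1$ with $q$ small.
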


\begin{proof}
 By (\ref{defF}), $w^*_+(t)=-X'(t,-\theta(t))$,
(\ref{E_and_w_star_determines_everything}) and
\eqref{upper_lower_bound_on_E} we get \[ \theta(t)=F(t,w^*_+(t))
\geq \int_{w^*(t)}^0 \frac{m_1(0)}{m_2(0)} \frac{1}{(t+y)^2}dy
\geq \frac{m_1(0)}{m_2(0)} \frac{1}{T^2} \abs{w^*(t)} \]

Rearranging (\ref{defG}) and using (\ref{defF}) we get that
 $w=w^*_+(t)$ is the positive root of
the function
\[f(w):=G(t,w)-F(t, w)w=
G(t,0)+\left(-\int_0^w yE(t,y)\right)
dy=f(0)+\left(f(w)-f(0)\right) \] We prove
\eqref{global_alter_wstar_bound} by considering the cases
$\frac{\abs{w^*(t)}}{t} \leq
\frac{1}{4}\sqrt{\frac{m_1(0)}{m_2(0)}}$ and
$\frac{\abs{w^*(t)}}{t} > \frac{1}{4}\sqrt{\frac{m_1(0)}{m_2(0)}}$
separately.

If $\frac{\abs{w^*(t)}}{t} \leq
\frac{1}{4}\sqrt{\frac{m_1(0)}{m_2(0)}}$, then we prove that
$w^*_+(t) \leq 2 \sqrt{\frac{m_2(0)}{m_1(0)}} \abs{w^*(t)}$ by
showing that $f(0) \leq \abs{f(w)-f(0)}$ with $w= 2
\sqrt{\frac{m_2(0)}{m_1(0)}} \abs{w^*(t)}$.

\[f(0)=\int_{w^*(t)}^0 (-y)E(0,t+y)dy \leq \int_0^{\abs{w^*(t)}} \frac{y}{(t-y)^2} dy\]
by \eqref{E_and_w_star_determines_everything} and
\eqref{upper_lower_bound_on_E}.
\be\label{fwfnull_lower_integral_formula} \abs{f(w)-f(0)} \geq
\int_0^w \frac{m_1(0)}{m_2(0)} \frac{y}{(t+y)^2}dy
=\int_0^{\abs{w^*(t)}} \frac{m_1(0)}{m_2(0)}
\frac{y}{(t\frac{\abs{w^*(t)}}{w} +y)^2}dy \ee It is
straightforward to check that
\[ 0 \leq y \leq \abs{w^*(t)} \,\, \& \,\, \frac{\abs{w^*(t)}}{t} \leq
\frac{1}{4}\sqrt{\frac{m_1(0)}{m_2(0)}} \quad \implies \quad
\frac{y}{(t-y)^2} \leq \frac{m_1(0)}{m_2(0)}
\frac{y}{(t\frac{\abs{w^*(t)}}{w}+y)^2}\] which is sufficient for
$f(0) \leq \abs{f(2 \sqrt{\frac{m_2(0)}{m_1(0)}}
\abs{w^*(t)})-f(0)}$ to hold.

If $\frac{\abs{w^*(t)}}{t} >
\frac{1}{4}\sqrt{\frac{m_1(0)}{m_2(0)}}$, then \[f(0)=G(t,0)=
\int_{w^*(t)}^0 F(t,y)dy \leq \abs{w^*(t)} \leq T\] since by
\eqref{F_U_E_U_formulas} we have $F(t,y)\leq m_0(t)\leq m_0(0)=1$.
Calculating the middle integral in
\eqref{fwfnull_lower_integral_formula} we get that in order to
have $f(0) \leq \abs{f(w)-f(0)}$
\[\frac{m_1(0)}{m_2(0)}\left(\log(1+\frac{w}{t})-1\right)
\geq T \] is sufficient. Rearranging this and using
$\frac{\abs{w^*(t)}}{t} > \frac{1}{4}\sqrt{\frac{m_1(0)}{m_2(0)}}$
 we obtain \eqref{global_alter_wstar_bound}.

 The proof of the upper bound of
(\ref{alternating_flip_explicit_upper_lower_bound}) is similar:
using \eqref{E_and_w_star_determines_everything} we get that $w_1
\leq t- \abs{w^*(t)} \leq t+w \leq w_2$ implies
\[ f(0) \leq \frac{1}{2} \phisup(w_1,w_2) w^*(t)^2, \quad
f(w)-f(0) \leq -\frac12 \phiinf(w_1,w_2) w^2\] Using
\eqref{condition_w1_w2}
 the
inequality
 $f \left(
 -\sqrt{\frac{\phisup(w_1,w_2)}{\phiinf(w_1,w_2)}}w^*(t)
 \right) \leq 0$ follows.
 The lower bound of (\ref{alternating_flip_explicit_upper_lower_bound}) is verified similarly.

If $u \in \lbrack -\theta(t) ,0 \rbrack$, then
 \[X(t,u)\leq -w^*(t)u +\frac12 \frac{1}{\phiinf(w_1,w_2)} u^2,\]
 since $\kderi{X}{u}(t,u)$ with $u \in \lbrack -\theta(t) ,0
\rbrack$ is equal to $\frac{1}{E(0,t+y)}$ for some \[y \in \lbrack
w^*(t), w^*_+(t) \rbrack \subseteq \lbrack w^*(t),
-\sqrt{\frac{\phisup(w_1,w_2)}{\phiinf(w_1,w_2)}}w^*(t) \rbrack,\]
thus $t+y \in \lbrack w_1,w_2 \rbrack$ by \eqref{condition_w1_w2}.
This implies the lower bound of
(\ref{upper_lower_bound_w_star_theta}), and the proof of the upper
bound is similar.

\end{proof}
The proof of Theorem \ref{alternating_converges_to_crit}. is
similar that of Theorem \ref{subcrit_converges_to_crit}.: if
$\varepsilon=\sup_i \{ \bt_{i+1}-\bt_{i}\}$ and $\bt_i < t \leq
\bt_{i+1}$ then $w^*(t)=w^*_+(\bt_i)-(t-\bt_i) \geq -\varepsilon$
and by \eqref{global_alter_wstar_bound} we have $w^*_+(\bt_i)
=\Ordo(\abs{w^*(\bt_i}))=\Ordo(\varepsilon)$ on $\lbrack 0, T
\rbrack$.


\begin{lemma}\label{lemma_burning_time_in_between_gel_times}
We consider the solution of (\ref{alter_smol_frozen_eq}) with
initial critical core $E(0,w)$. If $\gt_1<\gt_2$ are two
consecutive gelation times,
 then the unique burning time in between $\gt_1$ and $\gt_2$ is
\be \label{burning_time_in_between_gel_times}
\bt(\gt_1,\gt_2) = \frac{\int_{\gt_1}^{\gt_2} y E(0,y)dy}{\int_{\gt_1}^{\gt_2} E(0,y)dy}
 \ee
Moreover \be \label{thata_at_burningtime}
\theta(\bt(\gt_1,\gt_2))= \int_{\gt_1}^{\gt_2} E(0,y)dy\ee

\end{lemma}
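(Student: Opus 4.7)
The plan is to apply, at the two times where both identities simultaneously hold, the relations $F(t,w^*(t))=0$ (true by the definition of $w^*$ and $F$) and $G(t,w^*(t))=0$ (valid by \eqref{G_F_w_star_using_X_zero} whenever $\theta(t)=0$), combined with the closed-form translation formulas \eqref{F_solution_inhom_subcrit} and \eqref{G_solution_inhom_subcrit} (recall $\lambda\equiv 0$ for the alternating equation). The two relevant times are $t=\gt_1$ and $t=\bt_+$, i.e.\ immediately after the giant is removed. As a preliminary, we identify $w^*$ on either side of $\bt$: since $\Phi$ is constant on the burning-free open intervals $(\gt_1,\bt)$ and $(\bt,\gt_2)$, formula \eqref{X_solution_inhom_subcrit} collapses there to an affine translation in $u$, and differentiating once in $u$ gives $\dot w^*(t)\equiv -1$ on each. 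Combining this slope with the boundary values $w^*(\gt_1)=w^*(\gt_2)=0$ (the defining property of consecutive gelation instants) yields $w^*(\bt_-)=\gt_1-\bt$ and $w^*(\bt_+)=\gt_2-\bt$.

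From $F(t,w^*(t))=0$ and \eqref{F_solution_inhom_subcrit} we read off $\Phi(\gt_1)=F(0,\gt_1)$, and since no burning occurs on $(\gt_1,\bt)$ we also have $\Phi(\bt_-)=F(0,\gt_1)$. At $t=\bt_+$ the same identity gives $\Phi(\bt_+)=F(0,\bt+w^*(\bt_+))=F(0,\gt_2)$. Subtracting and invoking $\deri{F}{w}(0,\cdot)=E(0,\cdot)$ we obtain
\[
\theta(\bt)=\Phi(\bt_+)-\Phi(\bt_-)=F(0,\gt_2)-F(0,\gt_1)=\int_{\gt_1}^{\gt_2}E(0,y)\,dy,
\]
which is exactly \eqref{thata_at_burningtime}.

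For $\bt$ itself we apply the $G$-identity at $\gt_1$ and at $\bt_+$ (both instants have $\theta=0$): by \eqref{G_solution_inhom_subcrit} with $\lambda\equiv 0$,
\[
\int_0^{\gt_1}\Phi(s)\,ds=G(0,\gt_1),\qquad \int_0^{\bt}\Phi(s)\,ds=G(0,\gt_2)-(\gt_2-\bt)F(0,\gt_2).
\]
Taking the difference and substituting $\int_{\gt_1}^{\bt}\Phi(s)\,ds=(\bt-\gt_1)F(0,\gt_1)$, $G(0,\gt_2)-G(0,\gt_1)=\int_{\gt_1}^{\gt_2}F(0,y)\,dy$ and $F(0,\gt_2)-F(0,\gt_1)=\int_{\gt_1}^{\gt_2}E(0,y)\,dy$ (fundamental theorem of calculus, with $G'=F$ and $F'=E$), the $F(0,\gt_1)$-terms cancel and one is left with $\int_{\gt_1}^{\gt_2}(\bt-z)E(0,z)\,dz=0$, which rearranges into \eqref{burning_time_in_between_gel_times}. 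The only delicate point is the bookkeeping of left/right limits at $\bt$ together with the observation that $G(t,w^*(t))=0$ fails on the supercritical stretch $(\gt_1,\bt)$ but is restored at $\bt_+$ as soon as the giant is removed; everything else reduces to elementary algebra on the closed-form expressions in the invariant critical core $E(0,\cdot)$.
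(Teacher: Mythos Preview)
Your proof is correct and reaches the same two equations as the paper, but by a somewhat different route. The paper works entirely at the single time $\bt$: it invokes the auxiliary function $f(w)=G(\bt,w)-wF(\bt,w)$ from the proof of Lemma~\ref{lemma_alter_bounds} (whose positive root is $w^*_+(\bt)=\gt_2-\bt$) together with the integral representation \eqref{E_and_w_star_determines_everything}, obtaining directly $\int_{\gt_1-\bt}^{\gt_2-\bt} y\,E(0,\bt+y)\,dy=0$; and for the mass it writes $\theta(\bt)=F(\bt,w^*_+(\bt))=\int_{w^*(\bt)}^{w^*_+(\bt)}E(0,\bt+y)\,dy$. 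You instead evaluate the two invariants $F(t,w^*(t))=0$ and $G(t,w^*(t))=0$ at the two zero-$\theta$ instants $\gt_1$ and $\bt_+$, push them back to time $0$ via the translation formulas \eqref{F_solution_inhom_subcrit}, \eqref{G_solution_inhom_subcrit}, and read off $\theta(\bt)$ as the jump $\Phi(\bt_+)-\Phi(\bt_-)$. Your argument is self-contained in that it does not quote the function $f$ from the preceding proof, at the modest cost of a bit more bookkeeping with the $\Phi$-integral; the paper's version is shorter because that bookkeeping is already packaged inside $f$ and \eqref{E_and_w_star_determines_everything}.

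One small inaccuracy that does not affect your argument: your closing remark that ``$G(t,w^*(t))=0$ fails on the supercritical stretch'' is not correct. As the paper notes just before \eqref{alter_int_Phi_formula}, $\min_w G(t,w)=0$ continues to hold in the supercritical phase with $w^*(t)=\operatorname{argmin}_w G(t,w)<0$, so $G(t,w^*(t))=0$ holds throughout. Since you only invoke this identity at $\gt_1$ and $\bt_+$ anyway, nothing in the proof depends on this.
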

\begin{proof}
$\bt$ needs to satisfy $\gt_2-\bt=w^*_+(\bt)$, but by the proof of
Lemma \ref{lemma_alter_bounds}.
 $w^*_+(\bt)$ is the unique positive root of $G(\bt,0)-\int_0^w y E(0,\bt+y)dy$.
$G(\bt,0)=-\int_{\gt_1-\bt}^0 y E(0,\bt+y)dy$ by
(\ref{E_and_w_star_determines_everything}), so
$\int_{\gt_1-\bt}^{\gt_2-\bt} y E(0,\bt+y)dy=0$ must hold,
from which  (\ref{burning_time_in_between_gel_times}) easily follows.

By (\ref{defF}), $w^*_+(t)=-X'(t,-\theta(t))$ and
(\ref{E_and_w_star_determines_everything}) we get
\[\theta(\bt)=F(\bt,w^*_+(\bt))=\int_{w^*(\bt)}^{w^*_+(\bt)} E(0,\bt +y)dy=
\int_{\gt_1}^{\gt_2} E(0,y)dy  \]

\end{proof}

\begin{definition}\label{def_of_things_needed_for_alter_lim_thm}
 If $\conf{v}(t)$ is the solution of
the random alternating equations (see Definition
\ref{def_random_alternating_eqs}.), denote by $\bt_1<\bt_2<\dots$
the sequence of random burning times and by
$\gt=\gt_1<\gt_2<\dots$ the sequence of random gelation times.
Indeed $\gt_1<\bt_1<\gt_2<\bt_2<\dots$

 Let
$\tau_i:=\gt_{i+1}-\gt_i$ be the length of the $i$-th critical
interval.
 \[N(t):=\max\{ i: \gt_i<t \}, \quad \tau(t,i):=\tau_{N(t)+i}\]
$\tau(t,0)$ is the length of the critical interval containing $t$.

 Let $\theta(t,i):=\theta(\bt_{N(t)+i})$, thus $\theta(t,1)$
is the frozen mass of the first giant component born after $t$.
\[w^*_{-}(t,i):=\bt_{N(t)+i}-\gt_{N(t)+i}=-w^*(\bt_{N(t)+i})\]
 \[w^*_{+}(t,i):=\gt_{N(t)+i+1}-\bt_{N(t)+i}=w^*_+(\bt_{N(t)+i})\]
\end{definition}

\begin{definition} \label{Rayleigh}
A nonnegative random variable $X$ has Rayleigh distribution with
parameter $\sigma$, briefly $X \sim R(\sigma)$, if \[\prob{X
>x}=\exp(-\frac{1}{2 \sigma^2} x^2)=:R(\sigma,x)\] $\expect{X}=\sigma
\sqrt{\frac{\pi}{2}}$. $Y$ has a size-biased Rayleigh distribution
with parameter $\sigma$, briefly $Y \sim R_{sb}(\sigma)$ if
\[\prob{Y>y}=\frac{\expect{X \cdot \ind \lbrack X
>y \rbrack}}{\expect{X}}=R_{sb}(\sigma,y) \]
The scaling identities \be \label{Rayleigh_scaling}
R(\sigma,x)=R(a\sigma,ax) \quad \text{and} \quad
R_{sb}(\sigma,x)=R_{sb}(a\sigma,ax)\ee are valid for $a>0$.
\end{definition}

The r.h.s of (\ref{alter_limit_thm_statement_formula}) is
$R_{sb}(\frac{1}{\sqrt{2}},x)$.

The Rayleigh distribution emerges in our setting in the following
way: if we consider the solution of the random alternating
equations with burning times defined by a homogenous Poisson
process with rate $\lambda$, forget about the error terms in
(\ref{upper_lower_bound_w_star_theta}) by assuming $w_1=w_2$
 then $\theta(t)=2E \cdot
(t-\gt_i)$ if $\gt_i < t \leq \bt_i$, so \[\prob{\bt_i -\gt_i
>w}=\exp(-\lambda \int_0^w 2Es ds)=R(\frac{1}{\sqrt{2E\lambda}},w). \]
From $\theta(\bt_{i})=2E\cdot(\bt_i-\gt_i)$ and
\eqref{Rayleigh_scaling} we get
 $\theta(\bt_{i})
\sim R(\sqrt{\frac{2 E}{\lambda}})$. Assuming $w_1=w_2$ in
\eqref{alternating_flip_explicit_upper_lower_bound} we get
$w^*_+(\bt_i)=-w^*(\bt_i)$, thus
 $\tau_i \sim
R(\sqrt{\frac{2}{E\lambda}})$.

\begin{lemma}\label{bound_on_the distance of_critical times}
If $\conf{v}(t)$ is the solution of the random alternating
equations with constant rate function $\lambda(t)\equiv \lambda$
  then for every $\gt \leq t \leq T$ we have
  \be\label{maradek_theta_becs}
   \expect{ \theta(\bt_{N(t)})\ind\lbrack  \bt_{N(t)}<T \rbrack }
  =\Ordo(\lambda^{-\frac12})\ee
\be\label{maradek_ido_becs} \expect{ \gt_{N(t)+1}\wedge T -t}
=\Ordo(\lambda^{-\frac12})\ee
 as $\lambda \to \infty$ where the
constant in the $\Ordo$ depends only on the initial data and $T$.
\end{lemma}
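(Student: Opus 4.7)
The plan is to reduce both \eqref{maradek_theta_becs} and \eqref{maradek_ido_becs} to the single estimate
\[Q(t):=\expect{S_{N(t)}}=\Ordo(\lambda^{-1/2}),\qquad S_j:=\bt_j-\gt_j=-w^*(\bt_j).\]
By \eqref{global_alter_wstar_bound}, $w^*_+(\bt_j)\leq C_1 S_j$ with $C_1:=C(T,\conf{v}(0))$, so $\gt_{N(t)+1}-t\leq \tau_{N(t)}\leq (1+C_1)S_{N(t)}$ and \eqref{maradek_ido_becs} reduces to $Q(t)=\Ordo(\lambda^{-1/2})$. For \eqref{maradek_theta_becs}, writing $\theta(\bt_j)=\int_{-S_j}^{w^*_+(\bt_j)}E(0,\bt_j+y)\,dy$ and using Lemma \ref{lemma_upper_lower_bound_on_E} to bound $E(0,\cdot)$ uniformly on the deterministic window $[\gt,(1+C_1)T]$, on the event $\{\bt_{N(t)}<T\}$ we obtain $\theta(\bt_{N(t)})\leq C_2 S_{N(t)}$ for a constant $C_2$ depending only on the initial data and $T$.

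The next step is a Rayleigh-type tail bound on $S_j$. Applying \eqref{upper_lower_bound_w_star_theta} during the burning phase with $w_1,w_2$ chosen close to $\gt_j$, we get $\theta(\gt_j+u)\geq 2\phiinf u$ for $u$ up to an $\Ordo(1)$ threshold (beyond which $\theta$ stays bounded below by a positive constant). The intensity description \eqref{ramdom_alt_burning_time} together with the strong Markov property at the stopping time $\gt_j$ then yields $\condprob{S_j>s}{\cF_{\gt_j}}\leq \exp(-c\lambda s^2)$, and consequently $\condexpect{S_j\ind\lbrack S_j\geq a\rbrack}{\cF_{\gt_j}}\leq C_3\lambda^{-1/2}\exp(-c'\lambda a^2)$. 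The analogous upper bound $\theta(\gt_j+u)\leq 2\phisup u$ combined with \eqref{alternating_flip_explicit_upper_lower_bound} gives, for $\delta:=\lambda^{-1/2}$, a uniform positive lower bound $\condprob{\tau_j\geq\delta/2}{\cF_{\gt_j}}\geq p>0$ independent of $\lambda$.

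Decomposing $Q(t)$ according to $N(t)=j$ and using that $\gt_j\leq t\leq\gt_{j+1}$ combined with $\tau_j\leq(1+C_1)S_j$ forces $S_j\geq(t-\gt_j)/(1+C_1)$, the strong Markov property yields
\begin{multline*}
Q(t)\leq\sum_j \expect{\ind\lbrack\gt_j\leq t\rbrack\,\condexpect{S_j\ind\lbrack S_j\geq(t-\gt_j)/(1+C_1)\rbrack}{\cF_{\gt_j}}}\\
\leq C_3\lambda^{-1/2}\cdot\expect{\sum_j \ind\lbrack\gt_j\leq t\rbrack\exp(-c''\lambda(t-\gt_j)^2)}.
\end{multline*}
The main obstacle is to show that the remaining expectation is $\Ordo(1)$. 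The plan is a dyadic decomposition of $[0,t]$ into windows of length $\delta$: letting $R_k$ count the gelation times in $[t-(k+1)\delta,t-k\delta]$, one needs $\expect{R_k}\leq C_4$ uniformly in $k$ and $\lambda$. The point is that $\tau_j$ admits no positive deterministic lower bound (the Rayleigh density vanishes linearly near $0$), so the renewal estimate must instead exploit the uniform positivity $\condprob{\tau_j\geq\delta/2}{\cF_{\gt_j}}\geq p$: a geometric-tail argument based on iterated conditioning at the $\gt_j$'s then gives $\prob{R_k\geq n}\leq (1-p)^{n-\Ordo(1)}$, hence $\expect{R_k}=\Ordo(1)$. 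Finally $\sum_k\expect{R_k}\exp(-c''k^2)=\Ordo(1)$ yields $Q(t)=\Ordo(\lambda^{-1/2})$, completing both bounds.
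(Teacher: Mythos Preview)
Your approach is correct but takes a genuinely different route from the paper. The paper avoids the index-by-index decomposition entirely: it introduces the ``age'' process $\gamma(t):=t-\gt_{N(t)}$, computes its infinitesimal drift using the explicit burning-time formula of Lemma~\ref{lemma_burning_time_in_between_gel_times} (namely $\bt(\gt_{N(t)},s)$ and $\theta(\bt(\gt_{N(t)},s))$ as functions of the right endpoint $s$), obtains $\frac{d}{dt}\expect{\gamma(t)}\leq 1-\tfrac12\lambda\frac{\phiinf(T)^2}{\phisup}\expect{\gamma(t)}^2$ via Jensen, and concludes $\expect{\gamma(t)}=\Ordo(\lambda^{-1/2})$ by a forbidden-region argument. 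Both \eqref{maradek_theta_becs} and \eqref{maradek_ido_becs} then follow from $\expect{\gamma(t)}=\Ordo(\lambda^{-1/2})$ by the same reductions you use (\eqref{global_alter_wstar_bound}, \eqref{upper_lower_bound_w_star_theta}, and a direct tail bound on $\bt_{N(t)}-t$ via \eqref{global_theta_lower_bound}).

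Your renewal approach is more hands-on: the Rayleigh tail for $S_j$, the uniform $\condprob{\tau_j\geq\delta/2}{\cF_{\gt_j}}\geq p$, and the geometric control of $R_k$ are all valid, and the dyadic sum closes. What this buys you is that you never invoke the burning-time formula~\eqref{burning_time_in_between_gel_times}, relying only on the cruder two-sided bounds \eqref{global_theta_lower_bound}--\eqref{upper_lower_bound_w_star_theta}; in exchange you pay with the renewal bookkeeping (bounding $R_k$ by a sum of two geometrics, handling the random first index $J_k$ in each window, and tracking the range of validity of the $\theta$-bounds when $\gt_j+u$ may exceed $T$). The paper's ODE argument is shorter and sidesteps all of this by working directly with the time-$t$ marginal of $\gamma$.
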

\begin{proof}
Let $\gamma(t):=t-\gt_{N(t)}$. Then
\begin{multline*}
\lim_{dt \to 0} \frac{1}{dt} \expect{\gamma(t+dt)-\gamma(t) \mycond
\mathcal{F}_t}=1-\gamma(t)\lim_{dt \to 0} \frac{1}{dt} \prob{ t \leq \gt_{N(t)+1} \leq  t+dt  \mycond \mathcal{F}_t}=\\
1-\gamma(t)\lim_{dt \to 0} \frac{1}{dt} \prob{ \bt(\gt_{N(t)},t) \leq \bt_{N(t)} \leq  \bt(\gt_{N(t)},t+dt)   \mycond \gamma(t)}=\\
1-\gamma(t)  \theta( \bt(\gt_{N(t)},t)) \lambda \left. \frac{d}{ds} \bt(\gt_{N(t)},s) \right|_{s=t}=\\
1-\lambda E(0,t) \gamma(t) \left(t-\bt(t-\gamma(t),t)\right) \leq
1- \frac12 \lambda \frac{\phiinf(T)^2}{\phisup}\gamma(t)^2
\end{multline*}
by Lemma \ref{lemma_burning_time_in_between_gel_times}.
Taking the expectation of both sides of the above inequality and applying
 Jensen's inequality we get \[\frac{d}{dt} \expect{\gamma(t)} \leq
1- \frac12 \lambda
\frac{\phiinf(T)^2}{\phisup}\expect{\gamma(t)}^2.\] This
differential inequality together with $\gamma(\gt)=0$ implies
\[\expect{\gamma(t)} \leq \frac{1}{\sqrt{\lambda}} \frac{\sqrt{2
\phisup}}{\phiinf(T)}=\Ordo(\lambda^{-\frac12})\quad \quad  \gt
\leq t \leq T.\] by a "forbidden region"-type argument. Now we
prove \be\label{fapipa}  \expect{ \gt_{N(t)+1}\wedge T -\gt_{N(t)}
} =\Ordo(\lambda^{-\frac12})\ee
 from which \eqref{maradek_ido_becs}
trivially follows. We obtain \eqref{maradek_theta_becs} using
\eqref{fapipa} and $\theta(\bt_{N(t)}) \leq 2 \phisup \cdot
(\bt_{N(t)}-\gt_{N(t)})$ by the upper bound of
\eqref{upper_lower_bound_w_star_theta}.
\begin{multline*}
\gt_{N(t)+1}\wedge T -\gt_{N(t)}= \gamma(t) + \left(\gt_{N(t)+1}
\wedge T -t \right) \ind \lbrack t \geq \bt_{N(t)} \rbrack + \\
 \left( \gt_{N(t)+1}
\wedge T -\bt_{N(t)}\wedge T \right)\ind \lbrack t <
\bt_{N(t)}\rbrack+ \left( \bt_{N(t)} \wedge T -t\right)\ind
\lbrack t < \bt_{N(t)}\rbrack
\end{multline*}
\begin{multline*}
\left(\gt_{N(t)+1}\wedge T -t\right)
 \ind \lbrack t \geq \bt_{N(t)} \rbrack \, \leq \,
 w^*_+(t,0) \ind \lbrack t \geq \bt_{N(t)}
\rbrack \, \leq \\
 C(T,\conf{v}(0)) w^*_-(t,0) \ind \lbrack t \geq
\bt_{N(t)} \rbrack \, \leq \,
 C(T,\conf{v}(0)) \gamma(t)
\end{multline*}
where $C(T,\conf{v}(0))$ is defined in
\eqref{global_alter_wstar_bound}.
\begin{multline*}
 \left( \gt_{N(t)+1}
\wedge T -\bt_{N(t)}\wedge T \right)\ind \lbrack t <
\bt_{N(t)}\rbrack \, \leq \, w^*_+(t,0) \ind \lbrack t <
\bt_{N(t)} \leq T \rbrack \, \leq \\ C(T,\conf{v}(0)) \gamma(t) +
C(T,\conf{v}(0))\left( \bt_{N(t)} \wedge T -t\right)\ind \lbrack t
< \bt_{N(t)}\rbrack
\end{multline*}
By \eqref{ramdom_alt_burning_time} and
\eqref{global_theta_lower_bound} we have
\begin{multline*}
\expect{\left( \bt_{N(t)} \wedge T -t\right)\ind \lbrack t <
\bt_{N(t)}\rbrack}=\expect{\left(\bt_{N(t)} \wedge T -t\right)\lor 0 }=\\
\int_0^{T-t} \prob{\bt_{N(t)}-t \geq x} dx \leq
\int_0^{T-t}\exp\left( -\lambda \int_0^x \frac{m_1(0)}{m_2(0)}
\frac{1}{T^2}y dy\right)=\Ordo( \lambda^{-\frac12})
\end{multline*}
\[ \expect{\gt_{N(t)+1}\wedge T
-\gt_{N(t)}}=\Ordo(\expect{\gamma(t)}) +\Ordo\left( \expect{\left(
\bt_{N(t)} \wedge T -t\right)\lor 0}
\right)=\Ordo(\lambda^{-\frac12})
\]
\end{proof}

\begin{proof}[Sketch proof of of Theorem \ref{alter_limit_thm}.]
 Our aim is to make the following argument rigorous:
Let \[n(\lambda):=\lfloor \epslam\sqrt{\frac{E(t,0)\lambda}{\pi}}
\rfloor.\]
  If
$1 \ll \lambda$ then
$\theta(t,1),\theta(t,2),\dots,\theta(t,n(\lambda))$ are "almost"
i.i.d. with distribution $\theta(t,i) \sim R(\sqrt{\frac{2
E(t,0)}{\lambda}})$. $\tau(t,i) \approx
\frac{\theta(t,i)}{E(t,0)}$, so \[\sum_{i=1}^{n(\lambda)}
\tau(t,i) \approx \epslam\] by the weak law of large numbers.
Substituting $\hat{x}=2\sqrt{\frac{E(t,0)}{\lambda}} x$ into
\[
\frac{\Phi\left(t+\epslam,\hat{x}
 \right)-
\Phi \left( t,\hat{x} \right)}{\epslam E(t,0)} \approx
\frac{\sum_{i=1}^{n(\lambda)}\theta(t,i)\cdot\ind \lbrack
\theta(t,i)>\hat{x}\rbrack}
{\sum_{i=1}^{n(\lambda)}\theta(t,i)}\approx \frac{
\expect{\theta(t,1)\ind \lbrack \theta(t,1)>\hat{x}\rbrack } }
{\expect{\theta(t,1)}}
\]
we get (\ref{alter_limit_thm_statement_formula}).
\end{proof}

\begin{proof}[ Proof of Theorem \ref{alter_limit_thm}.]
We use the notations of Definitions
\ref{def_of_things_needed_for_alter_lim_thm}. and \ref{Rayleigh}.
\[ E:=E(t,0)=E(0,t)=\varphi_{crit}(t)\]We fix
 $x \geq 0$ and define
  \[\hat{x}:=2\sqrt{\frac{E}{\lambda}} x, \quad
   \theta(t,i,\hat{x}):=\theta(t,i)\ind \lbrack \theta(t,i)>\hat{x}
\rbrack, \quad n(\lambda, z):=\lfloor
\epslam\sqrt{\frac{E\lambda}{\pi}} (1+z) \rfloor \]
 By the assumption
$\lambda^{-\frac12} \ll \epslam$ we have $\lim_{\lambda \to
\infty} n(\lambda,z)=+\infty$ for any $-1<z$.

 Let
$m(\lambda):=N(t+\epslam)-N(t)-1$.
\begin{multline}\label{rewriting_the_sum_of_burnt_giants}
 \Phi\left(t+\epslam,\hat{x}
 \right)-
\Phi \left( t,\hat{x} \right)= \theta(t,0,\hat{x}) \ind \lbrack
\bt_{N(t)} >t \rbrack +\\ \sum_{i=1}^{m(\lambda)}
\theta(t,i,\hat{x})+ \theta(t+\epslam,0,\hat{x}) \ind \lbrack
\bt_{N(t+\epslam)}<t+\epslam \rbrack
\end{multline}

In order to prove (\ref{alter_limit_thm_statement_formula}) we
only need to show that we have $\lim_{\lambda \to \infty} \prob{
B(\lambda, \varepsilon)}=1$ for every $\varepsilon>0$ where
 \[ B(\lambda, \varepsilon):= \left\{
R_{sb}(\frac{1}{\sqrt{2}},x)-\varepsilon
<\frac{\sum_{i=1}^{m(\lambda)} \theta(t,i,\hat{x})}{E \epslam} <
R_{sb}(\frac{1}{\sqrt{2}},x)+\varepsilon \right \} \] because the
first and the last term on the r.h.s. of
(\ref{rewriting_the_sum_of_burnt_giants}) divided by $E \epslam$
converge to $0$ in probability as $\lambda \to \infty$ by
\eqref{maradek_theta_becs} and $\lambda^{-\frac12} \ll \epslam$.

\[\phisup(\lambda):=\phisup(t,t+2\epslam), \quad
\phiinf(\lambda):=\phiinf(t,t+2\epslam)\] By
(\ref{lipschitz_phisup_phiinf}) we have \be
\label{phisup_and_phiinf_are_almost_E_if_lambda_is_big}
\phisup(\lambda) \leq E+2D \epslam \quad \text{and} \quad E-2D
\epslam \leq \phiinf(\lambda) \ee  \[C^u(\lambda):= 1+
   \sqrt{\frac{\phisup(\lambda)}{\phiinf(\lambda)}} \quad C^l(\lambda):= 1+
   \sqrt{\frac{\phiinf(\lambda)}{\phisup(\lambda)}}\]

   $\lim_{\lambda \to \infty} C^u(\lambda)=\lim_{\lambda \to \infty} C^l(\lambda)= 2$,
    since $\epslam \ll 1$.

 We are
going to couple the random variables $\gt_{N(t)+1},
w^*_-(t,1),w^*_-(t,2),\dots$ to \[w^l_-(1),w^l_-(2),\dots \quad \text{ and }
\quad w^u_-(1),w^u_-(2),\dots\] where
  $w^l_-(i) \sim R(\frac{1}{\sqrt{2\phisup(\lambda)\lambda}})$ are
  i.i.d. and $w^u_-(i) \sim R(\frac{1}{\sqrt{2\phiinf(\lambda) \lambda}})$ are
  i.i.d., moreover the auxiliary random variables are independent
  from $\gt_{N(t)+1}$. If we define the events
  \[ A^u (\lambda, z, z_2):=
   \left\{ \gt_{N(t)+1}+ C^u(\lambda) \cdot
   \sum_{j=1}^{n(\lambda,z)} w^u_-(j) \leq t +\epslam \cdot (1+z_2) \right\}\]
\[ A^l (\lambda, z, z_2):=
   \left\{ \gt_{N(t)+1}+ C^l(\lambda) \cdot
   \sum_{j=1}^{n(\lambda,z)} w^l_-(j) \geq t +\epslam \cdot (1+z_2)\right\} \]
then it is an easy consequence of \eqref{maradek_ido_becs},
$\lambda^{-\frac12} \ll \epslam $, and the weak law of large
numbers that \[ -1<z <z_2 \implies \lim_{\lambda \to \infty}
\prob{A^u (\lambda, z, z_2)}=1\] \[z
>z_2>-1 \implies \lim_{\lambda \to \infty} \prob{A^l
(\lambda, z, z_2)}=1 \] Our coupling is going to satisfy
   \be \label{coupling_inequalities} A^u (\lambda, z, 1) \subseteq
  \bigcap_{i=1}^{n(\lambda,z)}  \{ w^l_-(i) \leq w^*_-(t,i)\leq
  w^u_-(i) \}
 \ee
for any $z$.

The joint construction of $w^l_-(j)$,  $w^*_-(t,j)$ and
  $w^u_-(j)$ for $j=1,2,\dots$ is as follows:
 given $\gt_{N(t)+1}$ and $w^*_-(t,1), \dots, w^*_-(t,j-1)$ we
can determine
 $\gt_{N(t)+j}$ by solving (\ref{alter_smol_frozen_eq}). For $s \geq
0$ Let \[\mu(s):=\lambda \theta(\gt_{N(t)+j}+s),\quad
\mu_l(s):=\lambda 2 \phisup(\lambda) s,\quad
 \mu_u(s):=\lambda 2
\phiinf(\lambda) s.\] Let $w^*_-(t,j)$, $w^l_-(j)$ and $w^u_-(j)$
be the horizontal coordinate of the leftmost point below the
graphs of $\mu$, $\mu_l$ and $\mu_u$ of the same standard uniform
2-dimensional Poisson process on the first quadrant of the plane.
Thus $w^l_-(j) \sim R(\frac{1}{\sqrt{2\phisup(\lambda)\lambda}})$,
 $w^u_-(j) \sim R(\frac{1}{\sqrt{2\phiinf(\lambda)\lambda}})$ are
independent from everything that was constructed earlier and
$\prob{w^l_-(j)\leq w^u_-(j)}=1$. The joint distribution of
$\gt_{N(t)+1}, w^*_-(t,1),\dots, w^*_-(t,j)$ agrees with that of
the solution of the random alternating equation.

We are going to prove (\ref{coupling_inequalities}) by induction.
Assume that $A^u (\lambda, z, 1)$ holds. If \be
\label{induction_hypothesis} \bigcap_{i=1}^{j-1} \{ w^l_-(i) \leq
w^*_-(t,i)\leq
  w^u_-(i) \}\, \cap \, \bigcap_{i=1}^{j-1} \{ \tau(t,i) \leq C^u(\lambda)\cdot
  w^u_-(i) \} \ee  holds for some $j \leq
  n(\lambda,z)$, then
  \[\gt_{N(t)+j} = \gt_{N(t)+1} +\sum_{i=1}^{j-1} \tau(t,i) \leq \gt_{N(t)+1} +C^u(\lambda)\sum_{i=1}^{j-1}w^u_-(i)  \]
which implies $\mu_u(s) \leq \mu(s) \leq \mu_l(s)$ for $0\leq s
\leq w^u_-(j)$ by (\ref{upper_lower_bound_w_star_theta}) and $A^u
(\lambda, z, 1)$. From this  $w^l_-(j) \leq w^*_-(t,j)\leq
  w^u_-(j)$ follows, and (\ref{alternating_flip_explicit_upper_lower_bound}) can be
   applied
  to deduce
   \[\tau(t,j) =w^*_-(t,j)+w^*_+(t,j) \leq \left(1+
   \sqrt{\frac{\phisup(\lambda)}{\phiinf(\lambda)}}\right)w^*_-(t,j)
\leq C^u(\lambda)w^u_-(j)\] Thus we can replace $j$ with $j+1$ in
\eqref{induction_hypothesis}. This completes the proof of
(\ref{coupling_inequalities}). Let
\[\theta^u(t,i,\hat{x}):=2 \phisup(\lambda) w^u(i) \cdot \ind
\lbrack 2 \phisup(\lambda) w^u(i)
> \hat{x} \rbrack\] \[\theta^l(t,i,\hat{x}):=2 \phiinf(\lambda)
w^l(i) \cdot \ind \lbrack 2 \phiinf(\lambda) w^l(i) > \hat{x}
\rbrack\]

(\ref{upper_lower_bound_w_star_theta}) and
(\ref{coupling_inequalities}) imply

\[ A^u (\lambda, z, 1)  \subseteq
 \bigcap_{i=1}^{n(\lambda,z)}  \{ \theta^l(t,i,\hat{x}) \leq \theta(t,i,\hat{x}) \leq
  \theta^u(t,i,\hat{x}) \}
\]
 \[ B^u (\lambda, z, \varepsilon):=
   \left\{
   \frac{\sum_{i=1}^{n(\lambda,z)} \theta^u(t,i,\hat{x})}{E \epslam} \leq R_{sb}(\frac{1}{\sqrt{2}},x) +\varepsilon \right\}\]
\[ B^l (\lambda, z, \varepsilon):=
   \left\{
   \frac{\sum_{i=1}^{n(\lambda,z)} \theta^l(t,i,\hat{x})}{E \epslam} \geq R_{sb}(\frac{1}{\sqrt{2}},x) -\varepsilon \right\}\]
The law of large numbers, \eqref{Rayleigh_scaling} and
(\ref{phisup_and_phiinf_are_almost_E_if_lambda_is_big}) imply that
\[ z<\varepsilon \; \implies \;  \lim_{\lambda \to \infty} \prob{B^u
(\lambda, z, \varepsilon)}=1\quad \text{ and }\quad   -\varepsilon< z  \implies
\lim_{\lambda \to \infty} \prob{B^l (\lambda, z,
\varepsilon)}=1.\]

 We can use
(\ref{coupling_inequalities}) and
\eqref{alternating_flip_explicit_upper_lower_bound} to show
\[A^u (\lambda, z, 1) \subseteq \bigcap_{i=1}^{n(\lambda,z)} \{
C^l(\lambda) w^l_-(i) \leq \tau(t,i) \leq C^u(\lambda) w^u_-(i),
\}
\]
 Since \[m(\lambda)=\max \{j:
\gt_{N(t)+1}+\sum_{i=1}^j \tau(t,i) < t+\epslam \}\] and
$A^u(\lambda,z,0) \subseteq A^u(\lambda,z,1)$ by definition,
\[ A^u(\lambda,z,0)
\subseteq \{ m(\lambda) \geq n(\lambda,z) \}, \quad A^l(\lambda,
z, 0) \cap A^u(\lambda,z,1) \subseteq \{ m(\lambda) \leq
n(\lambda,z) \},\]  \begin{multline*}
A^l(\lambda,\frac{\varepsilon}{2},0) \cap
A^u(\lambda,\frac{\varepsilon}{2},1) \cap
B^u(\lambda,\frac{\varepsilon}{2},\varepsilon) \cap
B^l(\lambda,-\frac{\varepsilon}{2}, \varepsilon) \cap
A^u(\lambda,-\frac{\varepsilon}{2},0) \subseteq
B(\lambda,\varepsilon) \end{multline*} This completes the proof of
$\lim_{\lambda \to \infty} \prob{ B(\lambda, \varepsilon)}=1$.

\end{proof}
$ $

{\bf Acknowledgements:} I thank
B\'alint T\'oth for introducing me to the topic of forest fire and
frozen percolation models, and the anonymous referees, whose comments helped me improving the paper.
 This research was  partially supported
by the OTKA (Hungarian National Research Fund) grant K 60708.

\end{document}